\newtheorem{theorem}{Theorem}
\newtheorem{proposition}{Proposition}
\newtheorem{corollary}{Corollary}
\newtheorem{remark}{Remark}
\newtheorem{definition}{Definition}
\newtheorem{lemma}{Lemma}
\newcommand{\Z}{{\mathbb Z}}
\newcommand{\R}{{\mathbb R}}
\newcommand{\C}{{\mathbb C}}
\newcommand{\RP}{{\mathbb RP}}
\begin{document}

\title[Complements of discriminants of simple singularities]{Complements of  discriminants of  simple real function singularities}
\author{V.A.~Vassiliev}
\email{vva@mi-ras.ru}
\address{Steklov Mathematical Institute of Russian Academy of Sciences \ \ and \newline Higher School of Economics, Moscow, Russia}
%

\subjclass[2010]{14B07 (primary), 14P05, 14Q30 (secondary). }
\keywords{Function singularity, discriminant, versal deformation, simplicial resolution}

\begin{abstract}
All components of complements of discriminant varieties of simple real function singularities are explicitly listed. New invariants of such components (for not necessarily simple singularities) are introduced. A combinatorial algorithm enumerating topological types of morsifications of real function singularities is promoted.
\end{abstract}

\maketitle

\section{Introduction}

Let $f:(\R^n,0) \to (\R,0)$, $df(0)=0$, be a smooth function singularity, and $F:(\R^n \times \R^l,0) \to (\R,0)$ be its arbitrary smooth deformation, i.e. a family of functions $f_\lambda \equiv F(\cdot, \lambda): \R^n \to \R,$ $f_0 \equiv f$. {\it Real discriminant} $\Sigma = \Sigma(F)$ of this deformation is the set of parameters $\lambda \in \R^l$ such that function $f_\lambda$ has a critical point   in $\R^n$ with zero critical value. This is a subvariety   in $\R^l$ (of codimension 1 in all interesting cases),  which can divide a neighborhood of the origin  into several connected components.  Discriminants appear  in many problems of PDE theory, mathematical physics and integral geometry as singular loci (aka {\it wavefronts}) of related special functions such as fundamental solutions of certain PDEs. The behavior of these functions in different connected components of the complements of discriminant varieties (where they are regular) can be very different; see e.g. \cite{petr}, \cite{Leray}, \cite{Her}, \cite{ABG}, \cite{Gording}, \cite{APLT}, \cite{Var}. Thus, the problem of the enumeration of these components arises at an early step of the study of these functions. 

Suppose that the  {\em Milnor number} of singularity $f$ is finite (see e.g. \cite{AVGZ} which is a standard reference on function singularities and their deformations). It is then natural to study the discriminant subvariety of an arbitrary {\em versal} deformation of $f$, to any of which all other deformations can be reduced. In particular, there is a one-to-one correspondence between the components of complements of discriminants of any two versal deformations of our singularity $f$. The shape and quantity of these components depend only on the singularity type of $f$ at the origin. The first natural class of such types consists of the so-called {\em simple singularities}. 
  Any germ of a simple singularity in $n\geq 2$ variables can be reduced by a choice of local coordinates to the form $f(x, y)+Q(z_1, \dots, z_{n-2})$, where $f$ is one of the polynomial normal forms listed in Table \ref{t1} and $Q$ is a non-degenerate quadratic form.  Milnor numbers of these singularities are indicated by lower indices in  notation $A_\mu$, $D_{2k}^-$, etc.  of these types.
\begin{table}
\begin{center}
\begin{tabular}{|l|l|c|}
\hline
Notation & Normal form &  \# of components   \\ 
\hline 
$A_\mu$, \quad $\mu \ge 1$ & $\pm x^{\mu+1} \pm y^2 $ &  $\left[\frac{\mu+3}{2}\right]$\\ [3pt]
$D_{2k}^-$;\quad $k \ge 2$ & $x^2y \/- y^{2k-1} $ &  $\frac{(k+2)(k+1)}{2}+1$   \\ [3pt]
$D_{2k}^+$;\quad $k \ge 2$ & $x^2y \/+ y^{2k-1} $ &  $\frac{(k+1)k}{2}$   \\ [4pt]
$\pm D_{2k-1}$; \ $k \ge 3$ & $\pm (x^2y \/+ y^{2k})$  & $\frac{(k+1)k}{2}$  \\ [3pt]
$\pm E_6$ & $x^3 \pm y^4 $ & 5 \cr
$E_7$ & $x^3 + xy^3 $ & 10\cr
$E_8$ & $x^3 + y^5$ & 10 \cr
\hline \end{tabular} \end{center}
\caption{Normal forms of real simple singularities in two variables}
\label{t1}
\end{table} 
A versal deformation of a simple singularity with Milnor number $\mu$ can be chosen in the form $F(x, y ;\lambda) + Q(z_1, \dots, z_{n-2})$, $\lambda =(\lambda_1, \dots, \lambda_\mu) \in \R^\mu$, where deformation $F(x, y; \lambda)$ is given by the corresponding formula from the following list: 
\begin{eqnarray}
A_\mu &  & f + \lambda_1 + \lambda_2x + \lambda_3x^2 + \dots + \lambda_{\mu} x^{\mu-1} \label{avd} \\
D_\mu &  & f + \lambda_1 + \lambda_2x + \lambda_3y+
\lambda_4y^2 + \cdots + \lambda_{\mu}y^{\mu -2} \label{dvd} \\
 E_6 & & f + \lambda_1 + \lambda_2x + \lambda_3y +
\lambda_4xy + \lambda_5y^2 + \lambda_6 xy^2 \label{e6vd} \\
 E_7 &  & f + \lambda_1 + \lambda_2x + \lambda_3y +
\lambda_4xy + \lambda_5y^2 + \lambda_6y^3 + \lambda_7y^4 \label{e7vd} \\ 
E_8 &  & f + \lambda_1 + \lambda_2x + \lambda_3y +
\lambda_4xy + \lambda_5y^2 + \lambda_6xy^2 +
\lambda_7y^3 + \lambda_8xy^3 \label{e8vd} 
\end{eqnarray}

Such deformations with equal $F$ but different $Q$ and $n$  have equal discriminant varieties, therefore it is enough to study them in case $n=2$ only. 

E.~Looijenga \cite{Lo} has proved a one-to-one correspondence between the components of the complement of the discriminant variety of a simple singularity and certain algebraic objects related to the corresponding reflection group, see \S \ref{woL} below. We use a different approach (sometimes using the Looijenga's results in the justifications) and present  explicit lists of these components, in particular give general formulas for numbers of them for serial singularity types.

\begin{theorem}
\label{mthm}
The numbers of local components of complements of discriminant varieties of  versal deformations of real simple singularities are as indicated in the right-hand column of Table \ref{t1}.
\end{theorem}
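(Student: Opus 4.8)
The plan is to identify the connected components of $\R^\mu \sm \Sigma$ with topological types of \emph{morsifications} of $f$, and then to enumerate the latter combinatorially. Fix a small ball $B \subset \R^2$ centered at the origin. For a parameter $\lambda$ outside $\Sigma$ the function $f_\lambda$ has no critical point on the level $\{f_\lambda = 0\}$ inside $B$, so by the Thom isotopy lemma the diffeomorphism type of the pair $(B, \{f_\lambda = 0\} \cap B)$ — equivalently of the decomposition of $B$ into $\{f_\lambda < 0\}$ and $\{f_\lambda > 0\}$ — is locally constant on $\R^\mu \sm \Sigma$. Hence this type is constant on each component, and once every type is shown realizable the number of components is at least the number of types. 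Conversely I would prove that the parameters realizing a fixed type form a \emph{connected} set: any two such morsifications can be joined inside their stratum by first pushing all critical values away from $0$ while keeping their signs, reducing to the standard connectivity of the space of Morse functions with a prescribed sign pattern. This shows that each type occurs for exactly one component and reduces Theorem~\ref{mthm} to counting types.

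To enumerate the types I would use the reduction explained after the list (\ref{avd})--(\ref{e8vd}): adjoining the quadratic form $Q$ changes neither $\Sigma$ nor the count, so it suffices to take $n=2$ (and, in the $A_\mu$ case, after splitting off $\pm y^2$, the genuinely one-variable function $\pm x^{\mu+1}$). A generic $f_\lambda$ is then a Morse function with exactly $\mu$ nondegenerate critical points, of indices $0,1,2$ (minima, saddles, maxima), and with distinct nonzero critical values. Following the real-morsification technique of A'Campo and Gusein-Zade, I would encode a type by its \emph{real divide}: the curve $\{f_\lambda = 0\}\cap B$ together with the distribution of the critical points among $\{f_\lambda<0\}$, $\{f_\lambda>0\}$ and the divide itself, taken up to homeomorphism of $B$. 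Crossing $\Sigma$ then corresponds to one critical value changing sign, i.e.\ to an elementary surgery of the divide, and the enumeration becomes the combinatorial problem, promoted in the abstract, of listing the admissible sign assignments compatible with the Morse (in)equalities for the sublevel sets of $f_\lambda$.

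Carrying this out family by family yields the formulas of Table~\ref{t1}. In the one-variable case $A_\mu$ the divide is a set of points on a segment and the only invariant is the number of real roots of the degree-$(\mu+1)$ morsification, which ranges over the integers of the parity of $\mu+1$ from its minimal value up to $\mu+1$; these are exactly $\left[\tfrac{\mu+3}{2}\right]$ values, each realizable and connected, giving the first row. For the series $D$ and $E$ the admissible configurations are governed by two independent counts — essentially the numbers of extrema placed on the positive and on the negative side of the divide, subject to its combinatorics — which produces the triangular expressions $\tfrac{(k+1)k}{2}$, $\tfrac{(k+2)(k+1)}{2}+1$, etc. Here the distinct real forms $D^{\pm}_{2k}$, $\pm D_{2k-1}$ and $\pm E_6$ must be handled separately, since the sign of $f$ — and hence which regions of the divide may receive extrema — is part of the data.

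I expect the main obstacle to be the reverse inequality in the first step: realizability and local triviality readily give that the number of components is at least the number of realized types, but the opposite bound requires showing that the stratum of each type is \emph{connected}, so that no type splits across several chambers. This is where the global structure of the discriminant enters, and where I would invoke Looijenga's description \cite{Lo} of the components via the associated reflection group as an independent check on the connectivity. A secondary difficulty is the bookkeeping of real forms: distinguishing the admissible sign patterns for $D^-_{2k}$ versus $D^+_{2k}$ (and the two $E_6$ signs) demands explicit morsifications, because the Morse inequalities alone do not separate them.
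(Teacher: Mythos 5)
Your easy direction (the type of $W(\lambda)$ is locally constant, hence constant on components) and your lower bound via explicit A'Campo--Gusein-Zade morsifications match the paper's \S\ref{realiz}, and your treatment of $A_\mu$ is the same trivial reduction the paper makes. But the converse --- that all parameters realizing a fixed type form a \emph{connected} set, and that your combinatorial list of types is \emph{complete} --- is precisely the hard content of the theorem, and your proposal does not prove it. The ``standard connectivity of the space of Morse functions with a prescribed sign pattern'' is not a standard fact, and even if the space of all such Morse functions on $\R^2$ were known to be connected, the relevant space is its intersection with the finite-dimensional versal deformation $(\ref{avd})$--$(\ref{e8vd})$; the paper's remark on rigid isotopy warns about exactly this: a path through the space of all curves of a given degree is not necessarily reducible to a path within the parameter space of the deformation. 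Similarly, Morse inequalities do not determine which divides and sign patterns occur \emph{inside the versal deformation}, so your enumeration scheme yields neither the completeness of the lists nor the triangular formulas. Finally, invoking Looijenga's correspondence as an ``independent check'' is circular: Theorem \ref{1.7.2.thm} translates the problem into enumerating $M_\lambda$-conjugacy classes of involutions in the coset $M\sigma$, but that enumeration is not carried out in \cite{Lo}; as the paper notes, it is the paper's own results that solve it.

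The paper's actual upper bound uses two mechanisms you do not have. For the $D_\mu$ series it computes $\bar H_*(\Sigma,\Z_2)$ via a simplicial resolution of the discriminant and a spectral sequence (Theorem \ref{homdm}), and Alexander duality (\ref{alex}) then gives the exact number of components of $\R^\mu\setminus\Sigma$, which coincides with the number of realized types; this simultaneously proves completeness and injectivity (Theorem \ref{mthm2}). For the $E_\mu$ singularities it runs a computer enumeration of \emph{virtual morsifications} under the surgeries (s1)--(s4), legitimized by properness of the Looijenga map, which partitions the components into equivalence classes separated by two numerical invariants (Proposition \ref{prog}); it then proves, by a delicate analysis of boundary strata of a basic component using Theorems \ref{lotriv} and \ref{1.7.2.thm}, that each equivalence class is a singleton (Theorem \ref{the9}). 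Without a substitute for these steps, your argument establishes only the lower bound on the number of components.
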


\begin{remark} \rm
The combinatorics of discriminants of singularities with Milnor numbers up to 6 was explicitly studied by V.~Sedykh \cite{sedykh}, see also \cite{sedykh2}.
In particular, these numbers for singularities $D_4$, $D_5$, $D_6$, and $E_6$ are given in Theorems 2.8 and 2.9 of \cite{sedykh}.
\end{remark}

In case $A_\mu$ the enumeration problem is trivial. Indeed, it can be reduced to the study of polynomials of degree $\mu+1$ in one variable. Such a polynomial is non-discriminant if and only if all its real roots are simple; the number of these real roots is a complete invariant of the component of the set of non-discriminant polynomials. 

In all cases we will assume that  function $f$ has  one of polynomial normal forms from Table \ref{t1} (in particular depends on two variables), and its versal deformation $F$ has the standard form given by corresponding formula (\ref{avd})--(\ref{e8vd}). 

For any parameter value $\lambda \in \R^\mu$ of either of these deformations,  consider the set of lower values 
 of the corresponding function, $W(\lambda) = \{X \in \R^2|f_\lambda(X) \leq 0\}$. These sets go to infinity in $\R^2$ along several {\em asymptotic sectors} not depending on $\lambda$ and corresponding to the domains of constant sign of the common principal  part $f$ of all functions $f_\lambda$ of this deformation. The number of these sectors is equal to 3 for $D_{2k}^-$, to 2 for $A_{2k-1}$ (the case of different signs $\pm$ in the normal form of Table \ref{t1}) or $\pm D_{2k-1}$ or $E_7$; to 0 for $A_{2k-1}$ (the case of both signs $+$), and to 1 in remaining cases. 

\begin{definition} \rm
Two sets of lower values, $W(\lambda)$ and $W(\lambda')$, corresponding to points $\lambda$ and $\lambda'$ of parameter space $\R^\mu$ of one of the deformations (\ref{avd})--(\ref{e8vd}) are {\em topologically equivalent} if they can be moved one to the other by an orientation preserving diffeomorphism of $\R^2$ not permuting asymptotic sectors. Topological equivalence classes of sets $W(\lambda)$ will be called also their {\em topological types}.
\end{definition}

It is easy to see that if $\lambda$ and $\lambda'$ belong to one and the same component of the complement of the discriminant variety then corresponding sets $W(\lambda)$ and $W(\lambda')$ are topologically equivalent. We prove below that the converse is also true.

\begin{theorem}
\label{mthm2}
If $\lambda$ and $\lambda'$ are non-discriminant points of parameter space $\R^\mu$ of a versal deformation from the list $($\ref{avd}$)$--$($\ref{e8vd}$)$, and corresponding sets  $W(\lambda), W(\lambda')$ are topologically equivalent, then $\lambda$ and $\lambda'$ belong to the same component of $\R^\mu \setminus \Sigma$.
\end{theorem}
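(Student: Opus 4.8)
The reverse implication is the easy one already noted in the text: along any path in $\R^\mu \sm \Sigma$ the number $0$ remains a regular value of $f_\lambda$, so the curves $\{f_\lambda=0\}$ trace out an ambient isotopy and the topological type of $W(\lambda)$ is locally constant. My plan for the stated direction is to convert the given topological equivalence into a path in the (large) space of functions for which $0$ is a regular value, and then to push that path down into the finite-dimensional versal family.

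\emph{Key step.} Let $\Phi$ be an orientation-preserving diffeomorphism of $\R^2$, not permuting the asymptotic sectors, with $\Phi(W(\lambda))=W(\lambda')$, and set $h:=f_{\lambda'}\circ\Phi$. Then $h$ and $f_\lambda$ have exactly the same zero set and the same sign at every point of $\R^2$; since $0$ is a regular value of each (for $h$ because $\Phi$ is a diffeomorphism), the quotient $\rho:=h/f_\lambda$ extends across $\{f_\lambda=0\}$ to a smooth strictly positive function, so that $h=\rho\,f_\lambda$. The homotopy $g_t:=(1-t+t\rho)\,f_\lambda$ has factor $1-t+t\rho$ strictly positive for every $t\in[0,1]$, hence preserves the zero set and keeps $0$ a regular value, and it joins $f_\lambda$ to $h$. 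Composing $f_{\lambda'}$ with an isotopy of $\Phi$ to the identity inside the connected group $\mathrm{Diff}^+(\R^2)$ joins $h$ back to $f_{\lambda'}$, again through functions for which $0$ is a regular value. Hence $f_\lambda$ and $f_{\lambda'}$ lie in one path component of the space of $0$-regular functions; equivalently they are contact equivalent through a positive unit and an orientation-preserving coordinate change.

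\emph{Descent to the versal family, the main obstacle.} It remains to replace this path in function space by a genuine path $\lambda(t)$ in $\R^\mu \sm \Sigma$. Here I would use versality: the universal discriminant---functions having a critical point on their zero level---is invariant under the contact group, and the versal family is a slice transverse to the contact orbits, so the standard retraction along orbit directions sends a neighborhood of the slice inside the complement of the universal discriminant onto $\R^\mu \sm \Sigma$. The delicate points are, first, that versality is a statement about germs at the origin whereas $W(\lambda)$ is a global object in $\R^2$; this gap is bridged by the fact that for small parameters the whole nontrivial part of $\{f_\lambda=0\}$ localizes near $0\in\R^2$ while the behavior in the fixed asymptotic sectors is rigid, so the global equivalence is governed by the local contact data. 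Second, one must verify that the orbit retraction is defined along the entire path $g_t$ and never crosses $\Sigma$, which should follow because contact equivalence preserves the zero-level-singularity condition and hence cannot carry an orbit across the universal discriminant. Establishing this descent rigorously---controlling the projection globally rather than only in a neighborhood of the slice---is the crux of the argument; when a shortcut is wanted, Looijenga's identification of the components with objects of the associated reflection group supplies an independent route to the same $\pi_0$-injectivity.
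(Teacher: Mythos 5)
Your ``key step'' is correct but it is the easy half of the work: multiplying by the positive factor $1-t+t\rho$ and composing with an isotopy of $\Phi$ to the identity in $\mathrm{Diff}^+(\R^2)$ indeed joins $f_\lambda$ to $f_{\lambda'}$ inside the infinite-dimensional space $\mathcal F$ of smooth functions on $\R^2$ having $0$ as a regular value. The genuine gap is precisely the step you label the crux, the descent to $\R^\mu\sm\Sigma$, and the mechanism you sketch for it cannot work. Versality is a statement about germs: it controls deformations of the germ of $f$ at the origin, for parameter values near $0$ and functions close to $f$, and it is exactly there that the versal family is a transversal slice to the contact orbits. Your path $g_t$ consists of global functions on $\R^2$ (non-polynomial, with uncontrolled behavior at infinity after composing with the isotopy $\Phi_t$) that leave any neighbourhood of the slice on which an ``orbit retraction'' could conceivably be defined; nothing in versality theory provides a projection of all of $\mathcal F$, or even of the $\mathcal F$-component of $f_\lambda$, onto $\R^\mu\sm\Sigma$. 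Put differently, what you need is injectivity of the map $\pi_0(\R^\mu\sm\Sigma)\to\pi_0(\mathcal F)$ induced by inclusion, and that injectivity is not a tool one can quote --- it is essentially a restatement of the theorem being proved. Your fallback (``Looijenga's identification supplies an independent route'') also does not close the gap: Looijenga's theorem identifies components of $\R^\mu\sm\Sigma$ with $M_\lambda$-conjugacy classes of involutions in the coset $M\sigma$, but to conclude anything one must still show that topologically equivalent sets $W(\lambda)$, $W(\lambda')$ force conjugate involutions, which is again the substance of the theorem.

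For contrast, the paper does not attempt any such direct path-construction. It proves the statement by an exact count: every component determines a topological type of $W$, the lists in \S\ref{types} are realized by explicit perturbations in \S\ref{realiz}, and then the number of components is computed independently --- for $D_\mu$ by Alexander duality $\tilde H^0(\R^\mu\sm\Sigma)\simeq\bar H_{\mu-1}(\Sigma)$ together with a simplicial resolution of $\Sigma$ and a spectral-sequence calculation (Theorem \ref{homdm}), and for $E_\mu$ by the virtual-morsification invariants produced by the computer program combined with Looijenga's correspondence and a delicate analysis of boundary strata of the basic component (Proposition \ref{prog} and Theorem \ref{the9}). Since the number of components equals the number of realized types, the tautological map from components to types is a bijection, which is Theorem \ref{mthm2}. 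If you want to salvage your approach, you would have to supply the descent as a theorem in its own right, and there is no known soft argument of that kind; this is why the paper's proof goes through homology and combinatorics rather than through function-space homotopies.
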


All possible topological types of sets $W(\lambda)$ for simple singularities will be listed in \S \ref{types}. In \S \ref{realiz} we prove that these types can be realized by certain perturbations $f_\lambda$ of these singularities. The fact that each of these types corresponds to only one connected component of $\R^\mu \setminus \Sigma$ and there are no other topological types will be proved in \S \ref{esti} for singularities $D_\mu$ and in \S \ref{estie} for $E_\mu$. 

Our lists were guessed (and in the case of $E_\mu$ singularities partly justified) with the help of a combinatorial computer program enumerating  topologically distinct morsifications of (not necessarily simple) function singularities, see \S \ref{adad}.

\begin{remark} \rm
Enumeration of complements of discriminants can be considered as a ramification of the problem of real algebraic geometry on the rigid isotopy of real algebraic varieties (see e.g. \cite{rohlin}, \cite{kharlamov},  \cite{pol},  \cite{viro}),  namely of its affine version with some boundary constraints and singularities at infinity. In particular, the study of $E_\mu$ singularities deals only with curves of degree $\leq 5$, the classification of which (at least for smooth projective curves and for smooth affine curves generic at the infinity) has been done for decades. However, our problem is not an a priori  subproblem of such a classification, because a rigid isotopy through the space of all curves of a certain degree is not necessarily reducible to a path within the parameter space of a deformation considered in our case. 
\end{remark}

\section{E.~Looijenga's results}
\label{woL}

\begin{theorem}[see \cite{Lo}] 
\label{lotriv}
All components of the complement of the real discriminant variety of any of versal deformations $($\ref{avd}$)$--$($\ref{e8vd}$)$ are contractible.
\end{theorem}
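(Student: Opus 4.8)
The plan is to exploit the quasi-homogeneity of all the normal forms in Table~\ref{t1}. Each such $f$ admits positive weights $a,b$ of the variables $x,y$ and a degree $d$ with $f(t^a x, t^b y)=t^d f(x,y)$ for every $t>0$; direct inspection shows that each monomial $m_i(x,y)$ occurring in the deformations (\ref{avd})--(\ref{e8vd}), including the constant monomial multiplied by $\lambda_1$, has weighted degree $d_i<d$. Assign to the parameter $\lambda_i$ the positive weight $w_i=d-d_i$ and set $t\cdot\lambda=(t^{w_1}\lambda_1,\dots,t^{w_\mu}\lambda_\mu)$. Since $w_i+d_i=d$, one obtains the identity $f_{t\cdot\lambda}(t^a x,t^b y)=t^d f_\lambda(x,y)$, so the diffeomorphism $(x,y)\mapsto(t^a x,t^b y)$ matches the critical points of $f_\lambda$ at level $0$ with those of $f_{t\cdot\lambda}$ at level $0$. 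Hence $\lambda\in\Sigma$ if and only if $t\cdot\lambda\in\Sigma$: the discriminant $\Sigma$ is a cone with apex at the origin, and every connected component $C$ of $\R^\mu\setminus\Sigma$ is invariant under the flow $\lambda\mapsto t\cdot\lambda$.

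Because the unperturbed $f$ has a critical point at the origin with value $0$, we have $0\in\Sigma$, so $C$ avoids the origin. The Euler field $E=\sum_i w_i\lambda_i\,\partial_{\lambda_i}$ generates this flow, and $\langle E,\lambda\rangle=\sum_i w_i\lambda_i^2>0$ on $\R^\mu\setminus\{0\}$, so $E$ is transverse to every round sphere $S_\varepsilon=\{|\lambda|=\varepsilon\}$ and each of its orbits meets $S_\varepsilon$ exactly once. Following this flow therefore yields a deformation retraction of $C$ onto its link $L=C\cap S_\varepsilon$, and it suffices to prove that each link is contractible.

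To contract $L$ I would bring in the translation field $T=\partial_{\lambda_1}$, whose flow adds a constant to $f_\lambda$ without changing its critical points: the point $\lambda+s\,e_1$ lies in $\Sigma$ precisely when $-s$ equals one of the (finitely many) critical values of $f_\lambda$. Thus each line parallel to the $\lambda_1$-axis meets $\Sigma$ in finitely many points and leaves it for good once $s$ is large, and $\R^\mu\setminus\Sigma$ is displayed as a union of ``bands'' lying between consecutive graphs $\lambda_1=-c_j(\lambda_2,\dots,\lambda_\mu)$, where the $c_j$ are the successive critical values. The aim is to equip each component with a proper, bounded-below function adapted to this sweep --- morally, a combination of the radius controlled by $E$ with the height $-\lambda_1$ --- having a single, nondegenerate minimum and no other critical points, so that its downward gradient flow contracts $C$.

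The main obstacle is exactly the verification that such a flow has no spurious critical points inside $C$ and never carries a critical value of $f_\lambda$ across $0$ (which would cross $\Sigma$) during the homotopy; equivalently, that none of the bands encloses a ``hole''. This forces one to track precisely how the real critical values of $f_\lambda$ move and collide as $\lambda$ varies --- the combinatorial data recorded by the topological types of the sets $W(\lambda)$, cf.\ Theorem~\ref{mthm2} --- and at this step I expect to rely on the description of $\Sigma$ as the discriminant of the reflection group attached to $f$ in order to certify that each chamber is genuinely a cell.
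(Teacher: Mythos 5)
There is a genuine gap, and you name it yourself in your last paragraph: the step you ``expect to rely on'' is not an auxiliary verification but the entire content of the theorem. Note first that the paper does not prove this statement at all --- it is quoted from Looijenga \cite{Lo}, whose argument identifies the base of the versal deformation of a simple singularity with the orbit space of the associated finite reflection group and derives contractibility of the chambers from that theory. Your opening reductions are correct and worth keeping: every normal form in Table~\ref{t1} is quasi-homogeneous, all deformation monomials in (\ref{avd})--(\ref{e8vd}) have smaller weighted degree, so $\Sigma$ is invariant under the weighted $\R_{>0}$-action, $0\in\Sigma$, and the Euler flow retracts each component $C$ onto its link $L=C\cap S_\varepsilon$. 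But this only trades contractibility of $C$ for contractibility of $L$, which is in no way easier; nothing so far excludes, say, a component homotopy equivalent to a circle. Ending by invoking ``the description of $\Sigma$ as the discriminant of the reflection group attached to $f$ to certify that each chamber is genuinely a cell'' is a citation of Looijenga's theorem, i.e.\ of the statement to be proved, not a step toward it.

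Moreover, the Morse-theoretic sweep you propose as the bridge does not exist as described. The real critical values $c_j$ of $f_\lambda$ are not globally defined continuous functions of $(\lambda_2,\dots,\lambda_\mu)$: they collide in pairs and become complex, so their number jumps, and $\R^\mu\setminus\Sigma$ is not a union of ``bands'' between graphs $\lambda_1=-c_j$. Already for $A_2$, where $f_\lambda=x^3+\lambda_2 x+\lambda_1+y^2$ and $\Sigma$ is the cusp $4\lambda_2^3+27\lambda_1^2=0$, a line parallel to the $\lambda_1$-axis meets $\Sigma$ in two points when $\lambda_2<0$ and in none when $\lambda_2>0$; the outer component of the complement is not a band between consecutive graphs, and no function of the form ``radius combined with height $-\lambda_1$'' is handed to you with a certified absence of spurious critical points. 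Detecting whether the chambers have holes is exactly what requires real input about $\Sigma$ --- in the paper this is done either by quoting \cite{Lo} or, for the $D_\mu$ series, by the independent homology computation of \S 5 (simplicial resolution plus Alexander duality), which shows $\bar H_i(\Sigma,\Z_2)$ is concentrated in dimension $\mu-1$ and hence that the components are at least $\Z_2$-acyclic. Your proposal, as it stands, establishes only the cone structure and postpones everything else.
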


All functions $f$ from Table \ref{t1} and their deformations (\ref{avd})--(\ref{e8vd}) can be extended to the complex domain and be considered as functions $\C^2 \to \C$ and $\C^2 \times \C^\mu \to \C$.

For any parameter value $\lambda \in \C^\mu $ of one of complexified deformations  (\ref{avd})--(\ref{e8vd})
consider the polynomial $f_\lambda(x, y)+z^2:\C^3 \to \C$. The corresponding {\em Milnor fiber} $V_\lambda \subset \C^3$ is defined by the equation $f_\lambda(x, y) + z^2 =0$. The {\it complex discriminant} $\Sigma_{\C}$ is the set of values $\lambda \in \C^\mu$ such that $V_\lambda$ is singular. The variety $\Sigma_\C \cap \R^\mu$ contains the real discriminant considered above, but in non-trivial cases is  greater  because a polynomial $f_\lambda$ with real coefficients can have imaginary critical points with critical value 0. However, the difference $(\Sigma_{\C} \cap \R^\mu) \setminus \Sigma$ has codimension $\geq 2$ in $\R^\mu$, therefore there is a one-to-one correspondence between the components of complements of these varieties.

For any $\lambda \in \C^\mu \setminus\Sigma_{\C}$, the Milnor fiber $V_\lambda$ is an oriented 4-dimensional manifold homotopy equivalent to the wedge of $\mu$ two-dimensional spheres, in particular $H_2(V_\lambda, \Z) \simeq \Z^\mu$ (see \cite{Milnor}). Intersection indices of cycles define a bilinear form  $\langle \cdot, \cdot \rangle$   on this lattice. For all simple singularities the corresponding quadratic form is negative definite. The {\em monodromy group}  $M_\lambda$ of our singularity with basepoint $\lambda$, i.e. the natural representation \ $\pi_1(\C^\mu \setminus \Sigma_\C, \lambda) \to \mbox{Aut} (H_2(V_\lambda, \Z))$ (see e.g. \cite{AVGZ2}, \cite{APLT}), preserves this form and hence is a subgroup  of  group $\mbox{O}( H_2(V_\lambda, \Z), \langle \cdot, \cdot \rangle)$. If $\lambda \in \R^\mu \setminus \Sigma_\C$ then the latter group contains also element $\sigma_\lambda$ induced by the complex conjugation of $V_\lambda$.

The lattices $H_2(V_\lambda, \Z)$ for different non-discriminant points $\lambda$ can be identified to one another by isomorphisms defined by paths connecting these points in $\C^\mu \setminus \Sigma_\C$. However, such isomorphisms depend on the paths, so that two such isomorphisms can differ by an element of the group $M_\lambda$. 
The sets of conjugacy classes of the group $\mbox{O}( H_2(V_\lambda, \Z), \langle \cdot, \cdot \rangle)$ by  $M_\lambda$ for different $\lambda$ are in a canonical one-to one correspondence with one another. 
Let $N_\lambda$ be the normalizer of the subgroup $M_\lambda$
in $\mbox{O}( H_2(V_\lambda, \Z), \langle \cdot, \cdot \rangle)$. The quotient groups
$N_\lambda/M_\lambda$ for all non-discriminant $\lambda$ are canonically isomorphic to one another; denote the resulting object by $N/M$.

\begin{proposition}[see \cite{Lo}]
\label{1.7.2.prop}
1. The quotient group $N/M$ is canonically isomorphic to the group
of automorphisms of the standard Dynkin graph of the considered simple
singularity $($i.e. is trivial for 
$A_1, E_7, E_8$, is isomorphic to the group $S(3)$ for $D_4$ and to
the group $\Z_2$ in the remaining cases$)$.

2. For any $\lambda \in \R^\mu \setminus \Sigma_{\C}$  action $\sigma_\lambda$ of the complex conjugation on
$H_2(V_\lambda, \Z)$ belongs to group $N_\lambda$. Cosets $\{M_\lambda\sigma_\lambda\} \in N_\lambda/M_\lambda$ define one and the same element of  group $N/M$ for all
$\lambda \in \R^\mu \setminus \Sigma_\C$.
In particular, class $\{M \sigma\} \in N/M$ is well-defined.
\end{proposition}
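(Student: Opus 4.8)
The plan is to translate every object into the language of $ADE$ root systems and then read off part~1 from the classical structure theory of automorphism groups, while deriving part~2 from the fact that the whole configuration is defined over $\R$. First I would invoke the Picard--Lefschetz description of the monodromy: for each of the deformations (\ref{avd})--(\ref{e8vd}) the suspended Milnor fiber $V_\lambda$ carries a distinguished basis of $\mu$ vanishing cycles, each of self-intersection $-2$, and the monodromy around the corresponding branch of $\Sigma_\C$ acts by the reflection in that cycle. This identifies $(H_2(V_\lambda,\Z),\langle\cdot,\cdot\rangle)$ with the root lattice $Q$ of the $ADE$ system $R$ of the same name (with the stated negative definite form), the vanishing cycles with the roots, and $M_\lambda$ with the Weyl group $W=W(R)$ generated by the reflections $s_\alpha$.

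For part~1 the decisive lattice fact is that in an $ADE$ root lattice the roots are exactly the vectors $\alpha$ with $\langle\alpha,\alpha\rangle=-2$, i.e. the nonzero vectors closest to the origin; hence every element of $\mbox{O}(H_2,\langle\cdot,\cdot\rangle)$ permutes the roots, so this orthogonal group coincides with the automorphism group $\mbox{Aut}(R)$ of the root system. Since conjugation by $g\in\mbox{Aut}(R)$ sends $s_\alpha$ to $s_{g\alpha}$, the Weyl group $M_\lambda=W$ is normal in $\mbox{O}(H_2,\langle\cdot,\cdot\rangle)$; therefore $N_\lambda=\mbox{O}(H_2,\langle\cdot,\cdot\rangle)$, and the classical exact sequence $1\to W\to\mbox{Aut}(R)\to\Gamma\to 1$ gives $N/M\cong\Gamma$, the automorphism group of the Dynkin diagram. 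Evaluating $\Gamma$ for each type yields the asserted list. The isomorphism is canonical because any two distinguished bases determine $W$-conjugate systems of simple roots, and $W$ acts trivially on the quotient $\Gamma=\mbox{Aut}(R)/W$.

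For part~2 I would first note that for real $\lambda$ the polynomial $f_\lambda$ has real coefficients, so coordinatewise conjugation preserves $V_\lambda=\{f_\lambda(x,y)+z^2=0\}$; being an antiholomorphic involution of a complex surface it preserves orientation (the sign being $(-1)^2$), whence $\sigma_\lambda\in\mbox{O}(H_2,\langle\cdot,\cdot\rangle)$. Because $\Sigma_\C$ is cut out by equations with real coefficients, the conjugation $c(\lambda)=\bar\lambda$ acts on the base $\C^\mu\setminus\Sigma_\C$, fixes each real point, and covers the fiberwise conjugation; consequently the monodromy representation $\rho$ satisfies $\rho(c_*\gamma)=\sigma_\lambda\,\rho(\gamma)\,\sigma_\lambda^{-1}$ for every $\gamma\in\pi_1(\C^\mu\setminus\Sigma_\C,\lambda)$. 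As $c_*$ is an automorphism of $\pi_1$ its image is again $M_\lambda$, so $\sigma_\lambda$ normalizes $M_\lambda$ and $\sigma_\lambda\in N_\lambda$.

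To see that the coset is independent of $\lambda$, I would fix real non-discriminant points $\lambda,\lambda'$ and a path $p$ joining them in $\C^\mu\setminus\Sigma_\C$; its conjugate $\bar p=c\circ p$ joins the same real endpoints. Since fiberwise conjugation is a bundle map covering $c$, it intertwines the Gauss--Manin transport, giving $\sigma_{\lambda'}\circ T_p=T_{\bar p}\circ\sigma_\lambda$, hence $T_p^{-1}\sigma_{\lambda'}T_p=(T_p^{-1}T_{\bar p})\,\sigma_\lambda$. The factor $T_p^{-1}T_{\bar p}$ is the monodromy along the loop $\bar p$ followed by $p^{-1}$, so it lies in $M_\lambda$; thus the identifying isomorphism $T_p$ carries $\{M_{\lambda'}\sigma_{\lambda'}\}$ onto $\{M_\lambda\sigma_\lambda\}$, and the class $\{M\sigma\}\in N/M$ is well-defined. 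I expect the main obstacle to lie precisely here: establishing cleanly the intertwining relation $\sigma_{\lambda'}\circ T_p=T_{\bar p}\circ\sigma_\lambda$ between complex conjugation and parallel transport, together with the orientation bookkeeping for $\sigma_\lambda$, since once the dictionary $M_\lambda=W$ is in hand part~1 is a direct appeal to the structure of $\mbox{Aut}(R)$.
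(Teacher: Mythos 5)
The paper never proves this proposition: it is imported verbatim from Looijenga \cite{Lo} (note the ``see \cite{Lo}'' in the statement), so there is no internal argument to measure yours against; the honest comparison is between your reconstruction and the classical root-system argument, and on that score your proof is correct. For part 1, the load-bearing inputs are exactly the ones you name: the theorem that for a simple singularity the intersection lattice of the (suspended) Milnor fiber is the corresponding negative definite $ADE$ root lattice and $M_\lambda$ is its full Weyl group $W$ (Picard--Lefschetz reflections in a distinguished basis whose diagram is the Dynkin diagram of the same name); the lattice fact that in an $ADE$ root lattice the vectors of square $-2$ are precisely the roots, so that $\mbox{O}\bigl(H_2(V_\lambda,\Z),\langle\cdot,\cdot\rangle\bigr)=\mbox{Aut}(R)$; and the splitting of $\mbox{Aut}(R)$ as $W$ extended by the Dynkin graph automorphism group $\Gamma$. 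These give $N_\lambda=\mbox{O}(H_2)$, hence $N/M\cong\Gamma$, and canonicity follows as you say from the simply transitive action of $W$ on root bases (the conjugating element is unique, so the induced identification of quotients is independent of the base). For part 2, the relation $\sigma_{\lambda'}\circ T_p=T_{\bar p}\circ\sigma_\lambda$ is precisely the statement that fibrewise conjugation is a bundle map covering $\lambda\mapsto\bar\lambda$ and therefore intertwines Gauss--Manin transport; your coset computation $T_p^{-1}\sigma_{\lambda'}T_p=(T_p^{-1}T_{\bar p})\,\sigma_\lambda$ with $T_p^{-1}T_{\bar p}\in M_\lambda$ is right, as is the orientation count (antiholomorphic involution of a complex surface preserves orientation, so $\sigma_\lambda$ preserves rather than negates the form). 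The only places you lean on citation rather than proof --- the identification $M_\lambda=W$ and local triviality of the Milnor fibration over $\C^\mu\setminus\Sigma_\C$ --- are classical and are the same inputs the paper itself uses implicitly (e.g.\ when asserting negative definiteness of the form), so this is a legitimate, essentially self-contained substitute for the reference to Looijenga.
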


\begin{theorem}[see \cite{Lo}]
\label{1.7.2.thm}
For any class of Table \ref{t1} there is a one-to-one correspondence between the set of connected
components of the corresponding space $\R^\mu \setminus \Sigma$ and the set of
$M_\lambda$-conjugacy classes of involutions of
$\tilde H_{n-1}(V_\lambda, \Z)$ lying in  coset $M \sigma$. 
This correspondence associates with any component the class
of the action of complex conjugation in group $H_2(V_\lambda,\Z)$ for an arbitrary $\lambda$ from this component.
\end{theorem}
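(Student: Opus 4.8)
The plan is to present $Y := \C^\mu \setminus \Sigma_\C$ as an aspherical space carrying the antiholomorphic involution $c$ of complex conjugation, and to convert the enumeration of the real components into a group-theoretic count of conjugacy classes of involutive lifts of $c$. Write $L := H_2(V_\lambda,\Z)$, let $\rho : \pi_1(Y,\lambda) \to \mbox{O}(L,\langle\cdot,\cdot\rangle)$ be the monodromy and $M = \mbox{im}\,\rho$. For simple singularities $Y$ is the complement of a Coxeter discriminant, hence $K(\pi_1(Y),1)$ by Deligne's asphericity theorem; moreover $Y^c = \R^\mu \setminus \Sigma_\C$ has the same components as $\R^\mu \setminus \Sigma$, the two sets differing in real codimension $\ge 2$. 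The map to be built sends a component $C$ to the $M$-conjugacy class of the involution $\sigma_\lambda$ induced by complex conjugation on $L$ at any $\lambda \in C$.

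First I would verify this map is well defined. By Theorem \ref{lotriv} every component $C$ is contractible, so the local system $L$ is trivial over $C$ and the involutions $\sigma_\lambda$, $\lambda \in C$, are identified with a single element $\sigma_C \in \mbox{O}(L)$. Comparing $L$ over $C$ with the lattice at the global basepoint requires a path in $Y$, and a change of path alters $\sigma_C$ by an element of $M$; hence the $M$-conjugacy class $[\sigma_C]$ is an invariant of $C$, and it lies in the coset $M\sigma$ by Proposition \ref{1.7.2.prop}.

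The core of the argument is equivariant covering theory over the contractible universal cover $\widetilde Y$. Adjoining $c$ to the deck group yields $G$, a semidirect product of $\pi_1(Y)$ with $\Z_2$, acting on $\widetilde Y$; the nontrivial coset $\pi_1(Y)\cdot c$ consists of the lifts of $c$, and a lift $\gamma c$ is an involution precisely when $\gamma\cdot \theta(\gamma) = e$, where $\theta$ is conjugation by $c$. The standard dictionary for group actions on contractible spaces then identifies the components of $Y^c$ with those $\pi_1(Y)$-conjugacy classes of involutive lifts that have nonempty fixed locus: such a lift fixes a totally real subset of $\widetilde Y$ projecting onto one component, and the contractibility of the components (Theorem \ref{lotriv}) is what makes this assignment injective. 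Applying $\rho$ carries these $\pi_1(Y)$-conjugacy classes to $M$-conjugacy classes of involutions in $M\sigma$, and the image of the class indexing $C$ is exactly $[\sigma_C]$.

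The remaining task, which I expect to be the main obstacle, is to show that this induced map on conjugacy classes is a bijection. Injectivity reduces to recognizing $\pi_1(Y)$-conjugacy of involutive lifts from their images in $M$, which I would settle using Proposition \ref{1.7.2.prop} and the fact that $\sigma_C$ determines $C$ up to monodromy. Surjectivity is the genuinely singularity-specific point: every involution in the coset $M\sigma$ must be realized by a true non-discriminant real parameter. Here I would pass to the reflection-group model $Y = \mathfrak{h}^{\mathrm{reg}}/W$, in which a real orbit $Wx$ satisfies $\overline{x} = wx$ for some $w \in W$, forcing $w^2 = e$ on the regular point $x$; realizing a prescribed twisted involution then amounts to producing a regular $x$ with $\overline{x} = wx$, e.g.\ from a real combination of eigenvectors of $w$. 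Combining this realizability with the asphericity of $Y$ and the contractibility of its real components supplied by Theorem \ref{lotriv} is what closes the correspondence.
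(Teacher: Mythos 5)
The paper itself does not prove Theorem \ref{1.7.2.thm}: it is quoted from Looijenga \cite{Lo} and used as a black box (e.g.\ in step {\bf I} of the proof of Theorem \ref{the9}), so your attempt can only be measured against Looijenga's argument, whose overall strategy --- asphericity of $Y:=\C^\mu\setminus\Sigma_\C$, transfer of the question to the reflection model $\mathfrak{h}^{\mathrm{reg}}/W$, realization of twisted involutions by regular points of real forms --- your outline does follow. Two of your steps, however, have real gaps. The smaller one is in the covering dictionary: you assert that an involutive lift of $c$ fixes a set ``projecting onto one component'' of $Y^c$, and that injectivity of the assignment comes from contractibility of the components (Theorem \ref{lotriv}). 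Contractibility of the components downstairs gives neither claim. What is actually needed is that the fixed set of an involutive lift acting on the \emph{contractible universal cover} $\widetilde Y$ is nonempty and connected; that is Smith theory for a $\Z_2$-action on a finite-dimensional $\Z_2$-acyclic space, and it is this --- not Theorem \ref{lotriv} --- that makes the map from components of $Y^c$ to conjugacy classes of lifts injective, and also shows that your ``nonempty fixed locus'' proviso is vacuous.

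The serious gap is the injectivity of the induced map to $M$-conjugacy classes, which you yourself flag as the main obstacle and then dispose of circularly: ``the fact that $\sigma_C$ determines $C$ up to monodromy'' \emph{is} the injectivity assertion of the theorem being proved, and Proposition \ref{1.7.2.prop} cannot substitute for it --- it concerns only the quotient $N/M$ and says nothing about recovering $\pi_1(Y)$-conjugacy of involutive lifts from $M$-conjugacy of their images, a genuine issue because $\rho$ has a very large kernel, namely $\pi_1(\mathfrak{h}^{\mathrm{reg}})$ (the pure Artin group). In the reflection model the missing statement is concrete: for a twisted involution $s$ in the coset $W\gamma_0$, where $\gamma_0$ is the antilinear twist induced by the real structure (note that your equation $\bar x = wx$ must carry this twist whenever the class $\{M\sigma\}$ is nontrivial in $N/M$), the centralizer of $s$ in $W$ must act transitively on the connected components of the regular part of the real form $\{x\in\mathfrak{h} : s\bar x = x\}$; equivalently, two non-discriminant real parameters whose conjugation involutions are $M$-conjugate must lie in one and the same component of $\R^\mu\setminus\Sigma$. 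This connectedness theorem is the actual content of Looijenga's paper --- surjectivity, as you correctly observe, is the easy eigenvector/real-form argument --- and your proposal leaves it unproved.
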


So, our results solve also the problem of the enumeration of conjugacy classes mentioned in the last theorem.

\section{Topological types of sets of lower values} 
\label{types}

In our pictures, we use a non-standard disposition of the coordinate frame, \,
\unitlength 1mm
\begin{picture}(7,5)
\put(0,0){\vector(1,0){5}}
\put(0,0){\vector(0,1){5}}
\put(5.2,0){$y$}
\put(1,4){$x$} \put(7,0){.}
\end{picture} 

By {\em ovals} we mean compact components of zero sets of functions $f_\lambda:\R^2 \to \R$. 

\subsection{$D_{2k}^-$}

The set $f^{-1}(0)$ and domains of constant signs of the polynomial $f$ look in this case as \   \ \unitlength 1mm
\begin{picture}(20,11)
\bezier{100}(0,0)(10,10)(20,0)
\bezier{100}(0,10)(10,0)(20,10)
\put(10,0){\line(0,1){10}}
\put(0,4){$+$}
\put(17,4){$-$}
\put(14,0){$+$}
\put(14,8){$+$}
\put(4,0){$-$}
\put(4,8){$-$}
\end{picture} \  \ (with curves replaced by lines if $k=2$).      
 The topological type of set $W(\lambda)$ is characterized in particular by the information which of the three asymptotic sectors of negative values of $f_\lambda$ are connected within this set. Depending on it, we have four cases listed in the following statement.
\begin{figure}
\unitlength 0.7mm
\begin{center}
{
\begin{picture}(80,34)
\bezier{250}(32,34)(22,12)(0,27)
\bezier{250}(32,0)(22,22)(0,7)
\bezier{250}(80,7)(60,17)(80,27)
\put(62.3,17){\circle{6}}
\put(46.5,17){\circle{6}}
\put(51,17){\dots}
\put(39,17){\circle{6}}
\put(74,16){$-$}
\put(60.4,16){$-$}
\put(44.5,16){$-$}
\put(37.3,16){$-$}
\put(12,5){$-$}
\put(12,27){$-$}
\put(4,16){$+$}
\put(0,0){\line(1,0){80}}
\put(0,0){\line(0,1){34}}
\put(80,34){\line(-1,0){80}}
\put(80,34){\line(0,-1){34}}
\end{picture} \qquad  
\begin{picture}(80,34)
\bezier{250}(48,34)(58,12)(80,27)
\bezier{250}(48,0)(58,22)(80,7)
\bezier{250}(0,7)(20,17)(0,27)
\put(17,17){\circle{6}}
\put(25,17){\circle{6}}
\put(29,17){\dots}
\put(39,17){\circle{6}}
\put(2,16){$+$}
\put(15.3,16){$+$}
\put(23,16){$+$}
\put(37.3,16){$+$}
\put(63,4){$+$}
\put(63,28){$+$}
\put(72,16){$-$}
\put(0,0){\line(1,0){80}}
\put(0,0){\line(0,1){34}}
\put(80,34){\line(-1,0){80}}
\put(80,34){\line(0,-1){34}}
\end{picture}
}
\end{center}
\caption{Cases I and IV for $D_{2k}^-$}
\label{d80}
\end{figure}
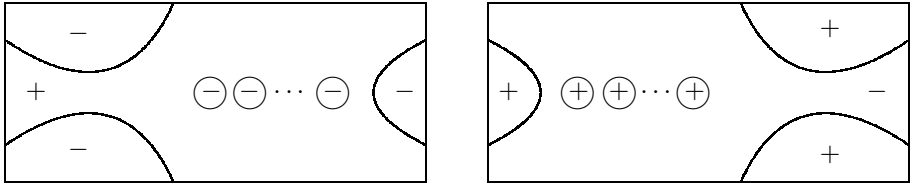

\begin{proposition}
\label{d2kmmm} All topological types of sets $W(\lambda)$, $\lambda \not \in \Sigma$,  in  case $D_{2k}^-$  belong to four families shown $($by the domains marked by \ $-$ \ $)$  in

I. $k$ pictures as in Fig.~\ref{d80} $($left$)$, the number of ovals varying from $0$ to $ k-1$.

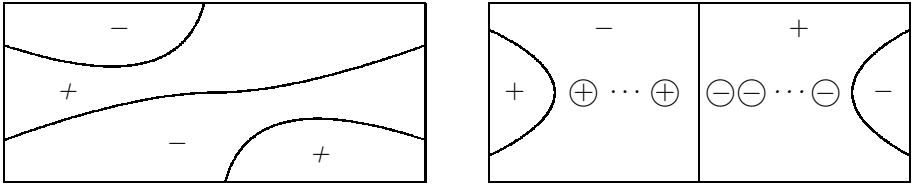
\begin{figure}
\unitlength 0.7mm
\begin{center}
{
\begin{picture}(80,34)
\bezier{200}(42,0)(47,19)(80,8)
\bezier{200}(0,26)(33,15)(38,34)
\bezier{200}(0,8)(25,17)(40,17)
\bezier{200}(80,26)(55,17)(40,17)
\put(20,28){$-$}
\put(10,16){+}
\put(31,6){$-$}
\put(58,4){+}
\put(0,0){\line(1,0){80}}
\put(0,0){\line(0,1){34}}
\put(80,34){\line(-1,0){80}}
\put(80,34){\line(0,-1){34}}
\end{picture} \qquad 
\begin{picture}(80,34)
\put(40,0){\line(0,1){34}}
\bezier{200}(80,5)(58,17)(80,29)
\bezier{200}(0,5)(25,17)(0,29)
\put(3,16){$+$}
\put(73,16){$-$}
\put(33.5,17){\circle{5}}
\put(31.5,15.7){$+$}
\put(18,17){\circle{5}}
\put(23,17){$\dots$}
\put(44,17){\circle{5}}
\put(50,17){\circle{5}}
\put(64,17){\circle{5}}
\put(16.1,15.7){$+$}
\put(42.2,15.5){$-$}
\put(48.0,15.5){$-$}
\put(54,17){\dots}
\put(62.0,15.5){$-$}
\put(20,28){$-$}
\put(57,28){$+$}
\put(0,0){\line(1,0){80}}
\put(0,0){\line(0,1){34}}
\put(80,34){\line(-1,0){80}}
\put(80,34){\line(0,-1){34}}
\end{picture} 
}
\end{center}
\caption{Cases II and III for $D_{2k}^-$}
\label{d81}
\end{figure}

II. Two pictures, one of which is shown in Fig.~\ref{d81} $($left$)$, and the other one is symmetric to it with respect to a horizontal mirror.

III. ${k \choose 2}$ pictures shown in Fig.~\ref{d81} $($right$)$, with the total number of ovals varying from $0$ to $k-2$.

IV. $k$ pictures shown in Fig.~\ref{d80} $($right$)$, the number of ovals varying from $0$ to $ k-1$.
\end{proposition}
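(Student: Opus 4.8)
The plan is to exploit that in deformation (\ref{dvd}) the variable $x$ enters only through $x^2y$ and $\lambda_2 x$, so that $F$ is \emph{quadratic} in $x$. Writing $g(y)=-y^{2k-1}+\lambda_1+\lambda_3 y+\lambda_4 y^2+\cdots+\lambda_{2k}y^{2k-2}$, we have $F=y\,x^2+\lambda_2 x+g(y)$, and I would analyse $W(\lambda)=\{F\le 0\}$ fibrewise over the $y$-axis through the discriminant $\Delta(y):=\lambda_2^2-4y\,g(y)$, a polynomial of degree $2k$ with positive leading coefficient. Over a value $y$ with $\Delta(y)>0$ the quadratic has two roots $\alpha(y)<\beta(y)$, and the fibre of $W$ is the segment $[\alpha,\beta]$ for $y>0$ (upward parabola) but the two complementary rays for $y<0$ (downward parabola); where $\Delta(y)<0$ the fibre is empty for $y>0$ and the whole line for $y<0$, and over $y=0$ it is a single ray. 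The widening of the segment as $y\to+\infty$ gives one asymptotic sector $R$, while the two rays for $y\to-\infty$ give the sectors $T$ ($x\to+\infty$) and $B$ ($x\to-\infty$), recovering the three sectors of $D_{2k}^-$.

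I would then read the non-discriminant condition off $\Delta$. A short gradient computation shows that at any $y_0\ne 0$ with $\Delta(y_0)=0$ the double root $x_0=-\lambda_2/(2y_0)$ satisfies $F_x(x_0,y_0)=0$ automatically, while $F_y(x_0,y_0)=g(y_0)/y_0+g'(y_0)$ vanishes exactly when $\Delta'(y_0)=0$. Hence the singular points of $\{F=0\}$ off the axis $y=0$ are precisely the multiple roots of $\Delta$, and since $\Delta(0)=\lambda_2^2>0$ excludes singularities on the axis when $\lambda_2\ne 0$, for $\lambda_2\ne 0$ one has $\lambda\notin\Sigma$ iff $\Delta$ has only simple roots. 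For such $\lambda$ the $2(a+b)$ real roots of $\Delta$ split into $2a$ negative and $2b$ positive ones (each count even, as $\Delta>0$ at $-\infty$ and at $0$), subject only to $a+b\le k$.

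From the sign pattern of $\Delta$ the topology of $W$ follows. A bounded interval of $\{\Delta>0\}$ contained in $\{y<0\}$ bounds an oval enclosing a positive region (a ``$+$'' oval) and one in $\{y>0\}$ bounds an oval enclosing a negative region (a ``$-$'' oval); these number $a-1$ and $b-1$, lie unnested along the axis, and exhaust the ovals. The two unbounded intervals of $\{\Delta>0\}$ carry the sectors, whose connectivity in $W$ is governed by two rules: $T$ and $B$ merge exactly when some $y<0$ yields a full-line fibre, i.e.\ when $a\ge 1$; and the $y\ge 0$ part of $W$ reaches $R$ exactly when $\{y>0\}$ has no empty fibre, i.e.\ when $b=0$. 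This yields four regimes: $(a=0,b\ge 1)$, all three sectors separate with $b-1$ negative ovals (Case I); $(a\ge 1,b=0)$, all sectors joined with $a-1$ positive ovals (Case IV); $(a\ge 1,b\ge 1)$, partition $\{T,B\}\,|\,\{R\}$ with ovals of both signs (Case III); and $(a=0,b=0)$, no ovals (Case II). The ranges $b\le k$, $a\le k$ and $a+b\le k$ produce the counts $k$, $k$, and ${k \choose 2}$.

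The step needing the most care is the behaviour on the axis $y=0$ and the role of $\lambda_2$. For $\lambda_2\ne 0$ one branch of $\{F=0\}$ crosses $y=0$ at a finite point while the other escapes to infinity, and the finite branch attaches a $y>0$ tongue to $B$ (if $\lambda_2>0$) or to $T$ (if $\lambda_2<0$). I would show that when $b\ge 1$ this tongue pinches off at the smallest positive root of $\Delta$ and is therefore a removable dead-end bump, so that in Cases I and III the sign of $\lambda_2$ does not change the topological type; it is equally immaterial in Case IV, where all sectors already lie in one component; only in Case II, where the $y\ge 0$ region is an essential bridge to $R$, does the sign of $\lambda_2$ produce the two mirror pictures. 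One must also check that the wall $\{\lambda_2=0\}$, across which a root of $\Delta$ passes through the origin, is generically disjoint from $\Sigma$, so that it does not enlarge the list. Finally, completeness — that this combinatorial data is a full invariant and that no further types occur — together with the bijection with the components of $\R^\mu\setminus\Sigma$, I would establish by building the equivalences fibrewise (an orientation-preserving diffeomorphism of the $y$-line matching the root patterns, lifted to match fibres and sectors), deferring the sharper count to \S\ref{esti}.
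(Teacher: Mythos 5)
Your argument is essentially correct, but it is a genuinely different proof from the one in the paper. The paper establishes this proposition in two separate places: \S\ref{realiz} realizes each listed type by explicit perturbations whose zero set contains the line $\{y=0\}$ (Fig.~\ref{d89}), with signs then assigned to saddlepoints, and \S\ref{esti} proves completeness homologically --- via Alexander duality (\ref{alex}) and a simplicial resolution of $\Sigma$ whose spectral sequence yields $\bar H_{\mu-1}(\Sigma,\Z_2)\simeq(\Z_2)^{(k+2)(k+1)/2}$ (Theorem \ref{homdm}), so that the number of components of $\R^{2k}\setminus\Sigma$ equals the number of types already realized and no further types can occur. You instead classify directly and elementarily: exploiting that deformation (\ref{dvd}) is quadratic in $x$, you encode $W(\lambda)$ by the sign pattern of $\Delta(y)=\lambda_2^2-4yg(y)$ --- incidentally the same reduction the paper uses for a different purpose in formula (\ref{diss}) and Proposition \ref{hyperb} --- and show that the numbers $(a,b)$ of pairs of negative/positive roots determine the type, the four regimes $(a{=}0,\,b{\ge}1)$, $(a{\ge}1,\,b{=}0)$, $(a{\ge}1,\,b{\ge}1)$, $(a{=}b{=}0)$ giving families I, IV, III, II with the correct counts $k$, $k$, ${k \choose 2}$, $2$. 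Your route is self-contained, yields realization for free (any admissible root pattern of $\Delta$ is achievable by a choice of $g$ and $\lambda_2\neq 0$), and makes the combinatorics transparent; the paper's route is heavier but gives strictly more, namely the exact number of connected components, i.e.\ the $D_\mu$ part of Theorem \ref{mthm2} --- which your method cannot reach, since identifying the components with $\lambda_2>0$ and $\lambda_2<0$ carrying the same root pattern requires a path through the wall $\{\lambda_2=0\}$, where your fibrewise coordinates degenerate; you correctly defer that sharper statement to \S\ref{esti}.

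Two points need tightening, neither fatal. First, your claim that the wall $\{\lambda_2=0\}$ is ``generically disjoint from $\Sigma$'' is not the right justification; what is needed (and all that is needed) is that $\R^{2k}\setminus\Sigma$ is open, so any non-discriminant $\lambda$ with $\lambda_2=0$ lies in the same component as nearby parameters with $\lambda_2\neq0$, whose sets $W$ you have already classified --- hence the wall contributes no new types. Second, the ``removable dead-end bump'' step --- that in cases I, III, IV the tongue of $W$ crossing $y=0$ can be retracted by an ambient orientation-preserving diffeomorphism not permuting the three sectors, whereas in case II it is essential and records the sign of $\lambda_2$ --- is the real crux of your case distinction and deserves an explicit isotopy argument rather than one clause, since it is exactly what prevents the counts in cases I, III, IV from doubling.
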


\subsection{$D_{2k}^+$} The function of this type given in Table \ref{t1} vanishes only on  the line $\{y=0\}$ (vertical in our pictures).
\begin{figure}
\begin{center}
\begin{picture}(60,20)
\put(30,0){\line(0,1){20}}
\put(5,10){\circle{5}}
\put(12,10){\circle{5}}
\put(16,10){$\dots$}
\put(25,10){\circle{5}}
\put(35,10){\circle{5}}
\put(40,10){$\dots$}
\put(50,10){\circle{5}}
\put(10.5,9){$+$}
\put(3.5,9){$+$}
\put(23.5,9){$+$}
\put (12,2){$-$}
\put(33.5,9){$-$}
\put(48.5,9){$-$}
\put(45,2){$+$}
\put(0,0){\line(1,0){60}}
\put(0,0){\line(0,1){20}}
\put(60,20){\line(-1,0){60}}
\put(60,20){\line(0,-1){20}}
\end{picture}
\end{center}
\caption{$D_{2k}^+$}
\label{d84}
\end{figure}
\begin{proposition}
\label{d34}
All the topological types of sets $W(\lambda),$ where $\lambda$ are non-dis\-cri\-mi\-nant parameter values of a versal deformation of a singularity of type $D_{2k}^+$, are as shown in Fig.~\ref{d84}, where the numbers of ovals marked respectively by \ $+$ \ and \ $-$ \ can be any non-negative integers \ $p$ \ and \ $m$ \ such that \ $p + m \leq k-1$.
\end{proposition}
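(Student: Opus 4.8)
The plan is to exploit that the deformation (\ref{dvd}) with $\mu=2k$ is \emph{quadratic in} $x$. Writing $g(y)=y^{2k-1}+\lambda_1+\lambda_3y+\lambda_4y^2+\cdots+\lambda_{2k}y^{2k-2}$, we have $F(x,y;\lambda)=y\,x^2+\lambda_2 x+g(y)$, so the whole geometry of $W(\lambda)$ can be read off slice by slice along the vertical lines $\{y=\mathrm{const}\}$ from the discriminant of this trinomial,
\[
\Delta(y)=\lambda_2^2-4\,y\,g(y),
\]
a polynomial in $y$ of degree $2k$ with leading coefficient $-4$ and $\Delta(0)=\lambda_2^2$. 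For $y\ne0$ the slice of $\{F=0\}$ over $y$ has two, one or zero points according to the sign of $\Delta(y)$; for $y>0$ the parabola opens upward, so $W$ meets the slice in the segment between the roots, while for $y<0$ it opens downward, so $W$ meets it in the complement of that segment. Set $P:=-\tfrac14\Delta$. Its coefficients of $y^1,\dots,y^{2k-1}$ are exactly the free parameters $\lambda_1,\lambda_3,\dots,\lambda_{2k}$, it is monic of degree $2k$, and $P(0)=-\lambda_2^2/4$; hence $P$ ranges over \emph{all} monic degree-$2k$ polynomials with $P(0)\le 0$.

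First I would reduce to the generic locus $\{\lambda_2\ne0\}$. This hyperplane has codimension $1$, so every component of $\R^{2k}\sm\Sigma$ meets it; since the topological type of $W(\lambda)$ is constant on components, it suffices to classify types over $\{\lambda_2\ne0\}\sm\Sigma$. On this locus $\partial_xF=\lambda_2\ne0$ on the line $\{y=0\}$, so there is no critical point there, and a direct computation (the double-root condition $\Delta(y_0)=\Delta'(y_0)=0$ forces both partials of $F$ to vanish at $x_0=-\lambda_2/2y_0$, and conversely) shows that real critical points of $F$ with zero value at some $y\ne0$ correspond exactly to multiple roots of $\Delta$. Thus $\lambda\notin\Sigma$ precisely when $\Delta$ has only simple roots, in which case $\{F=0\}$ is smooth and $\Delta(0)=\lambda_2^2>0$.

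The heart of the argument is the slice-wise bookkeeping. Since $\Delta(0)>0$ and $\Delta\to-\infty$ at $\pm\infty$, the numbers $r_+$ and $r_-$ of roots of $\Delta$ in $\{y>0\}$ and $\{y<0\}$ are both odd. Tracking the roots $x_\pm(y)=\bigl(-\lambda_2\pm\sqrt{\Delta(y)}\bigr)/(2y)$ as $y\to0$ shows that the $W$-slices over the two intervals of $\{\Delta>0\}$ adjacent to $0$ are unbounded half-lines that glue, across $\{y=0\}$, into a single unbounded component of $W$ occupying the one negative asymptotic sector. The $\{F>0\}$-regions in $y<0$ and the $\{F\le0\}$-regions in $y>0$ lying over intervals of $\{\Delta>0\}$ bounded away from $0$ then give, respectively, the $+$-ovals and the $-$-ovals, one per interval and pairwise unnested because they project to disjoint $y$-intervals. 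Counting positive-sign intervals yields exactly $p=(r_--1)/2$ ovals marked $+$ and $m=(r_+-1)/2$ marked $-$, so $W(\lambda)$ has the form of Fig.~\ref{d84}, and $p+m=(r_++r_-)/2-1\le k-1$ since $\Delta$ has at most $2k$ real roots. Distinct $(p,m)$ give $W$ with different numbers of bounded components, hence inequivalent types.

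For realizability, given $p,m\ge0$ with $p+m\le k-1$ I would prescribe $P=(y^2+1)^{\,k-1-p-m}$ times a product of $2p+1$ distinct negative and $2m+1$ distinct positive linear factors; then $P(0)<0$ because the even total $2p+2m+2$ of real roots includes an odd number of negative ones. Setting $\lambda_2=2\sqrt{-P(0)}$ and reading $\lambda_1,\lambda_3,\dots,\lambda_{2k}$ off the remaining coefficients yields a non-discriminant $\lambda$ realizing the type $(p,m)$. I expect the main obstacle to be the topological gluing near $\{y=0\}$: verifying that the slices adjacent to $0$ merge into one unbounded component (so the interval of $\{\Delta>0\}$ touching $0$ contributes no oval) and that, with a single asymptotic sector, the resulting configuration matches Fig.~\ref{d84} up to an orientation-preserving diffeomorphism --- rather than the essentially combinatorial root count.
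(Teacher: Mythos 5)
Your proof is correct, and it takes a genuinely different route from the paper. The paper splits Proposition \ref{d34} into two halves proved in different sections: realization is done in \S \ref{realiz} by building the preliminary perturbations $\tilde f_j$ of Fig.~\ref{d8p9} (left) and then assigning signs to their saddle values via the interpolation/versality argument, while completeness is obtained only in \S \ref{esti}, as a corollary of Theorem \ref{homdm} --- an Alexander-duality computation (\ref{alex}) of $\bar H_*(\Sigma,\Z_2)$ through a simplicial resolution and a spectral sequence, which yields that $\R^{2k}\sm\Sigma$ has exactly $\frac{k(k+1)}{2}$ components, matching the number of realized types. You instead reduce everything to root-counting for the discriminant $\Delta(y)=\lambda_2^2-4yg(y)$ of the deformation viewed as a quadratic in $x$, getting completeness and realization simultaneously and elementarily; amusingly, the paper uses exactly this quadratic discriminant (formula (\ref{diss}), Propositions \ref{hyperb} and \ref{prd2kpa}) but only as a tool inside the homology computation, never for the direct topological classification. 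The trade-off: your argument proves the proposition as stated but gives no control on how many connected components of $\R^{2k}\sm\Sigma$ share a given type (your parametrization leaves, a priori, two components per type, one for each sign of $\lambda_2$), whereas the paper's component count is precisely what upgrades the proposition to the one-type-one-component statement needed for Theorems \ref{mthm} and \ref{mthm2}; conversely, your route is self-contained and avoids all the resolution machinery.

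One small inaccuracy to fix: after identifying real critical points with zero value ($y\neq 0$) with real multiple roots of $\Delta$, you conclude that $\lambda\notin\Sigma$ iff ``$\Delta$ has only simple roots.'' This should read ``only simple \emph{real} roots'' (equivalently, no real multiple roots): complex multiple roots of $\Delta$ do not put $\lambda$ into the real discriminant. As literally written, your criterion would contradict your own realization step, where $P=-\tfrac14\Delta$ contains the factor $(y^2+1)^{k-1-p-m}$ and hence has multiple roots at $\pm i$ whenever $k-1-p-m\geq 2$. The derivation you sketch (via $\Delta'(y_0)=-4y_0\,\partial_yF(x_0,y_0)$ at real $y_0$ with $\Delta(y_0)=0$) establishes only the real statement, which is the one you actually need; so this is a wording slip, not a gap.
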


\subsection{$D_{2k-1}$}

We consider only  case $+D_{2k-1}$, case $-D_{2k-1}$ can be reduced to it by the multiplication of all functions $f_\lambda$  by $-1$. 
The zero set  of our function $f$ of type $+D_{2k-1}$ looks as \ \ 
\unitlength 1mm
\begin{picture}(18,9)
\put(10,0){\line(0,1){8}}
\bezier{70}(0,0)(4,4)(10,4)
\bezier{70}(0,8)(4,4)(10,4)
\put(-1,3){$+$}
\put(13,3){$+$}
\put(5,0){$-$}
\put(5,6){$-$}
\put(18,0){.}
\end{picture}
\unitlength 0.8mm
\begin{figure}
\begin{center}
{
\begin{picture}(60,30)
\bezier{200}(0,25)(27,10)(30,30)
\bezier{200}(0,5)(27,20)(30,0)
\put(35,15){\circle{5}}
\put(42,15){\circle{5}}
\put(46,15){$\dots$}
\put(55,15){\circle{5}}
\put(33.5,14){$-$}
\put(40.5,14){$-$}
\put(53.5,14){$-$}
\put(5,14){$+$}
\put(13,3){$-$}
\put(13,25){$-$}
\put(0,0){\line(1,0){60}}
\put(0,0){\line(0,1){30}}
\put(60,30){\line(-1,0){60}}
\put(60,30){\line(0,-1){30}}
\end{picture} \qquad
\begin{picture}(70,30)
\bezier{200}(0,5)(20,15)(0,25)
\bezier{200}(38,0)(33,20)(38,30)
\put(16,15){\circle{5}}
\put(21,15){$\dots$}
\put(30,15){\circle{5}}
\put(45,15){\circle{5}}
\put(50,15){$\dots$}
\put(60,15){\circle{5}}
\put(3,14){$+$}
\put(14.5,14){$+$}
\put(58.5,14){$-$}
\put(28.5,14){$+$}
\put(43.5,14){$-$}
\put(20,5){$-$}
\put(50,5){$+$}
\put(0,0){\line(1,0){70}}
\put(0,0){\line(0,1){30}}
\put(70,30){\line(-1,0){70}}
\put(70,30){\line(0,-1){30}}
\end{picture}
}
\end{center}
\caption{$+D_{2k-1}$}
\label{d9f}
\end{figure}
\begin{proposition}
\label{d45}
All the topological types of sets $W(\lambda)$, where $\lambda \in \R^{2k-1}$ are non-discriminant parameter values of  deformation $($\ref{dvd}$)$ of singularity $+D_{2k-1}$, are

I. $k$ types shown in Fig.~\ref{d9f} $($left$)$, where the number of ovals can be equal to $0,$ $1$, \dots, $k-1$;

II. ${k \choose 2}$ types shown in Fig.~\ref{d9f} $($right$)$,  where the numbers of ovals marked respectively  by \ $+$ and \ $-$  can be any non-negative integers \ $p$ and \ $m$ such that \ $p + m \leq k-2$.
\end{proposition}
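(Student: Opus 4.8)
The plan is to exploit the fact that, in the normal form $f=x^2y+y^{2k}$ and its versal deformation (\ref{dvd}), the function $f_\lambda(x,y)=y\,x^2+\lambda_2 x+g(y)$ is \emph{quadratic in} $x$, where $g(y)=y^{2k}+\lambda_1+\lambda_3 y+\cdots+\lambda_{2k-1}y^{2k-3}$. Projecting everything to the $y$-axis turns the two-variable problem into the study of the single discriminant polynomial $\Delta(y)=\lambda_2^2-4\,y\,g(y)$. For fixed $y$ the $x$-fibre of $W(\lambda)=\{f_\lambda\le 0\}$ is read off from the signs of $y$ and $\Delta(y)$: for $y>0$ it is the bounded segment between the two $x$-roots when $\Delta(y)\ge 0$ and empty otherwise; for $y<0$ it is the complement of an open segment (two unbounded rays) when $\Delta(y)\ge0$ and the whole line when $\Delta(y)<0$; at $y=0$ (for $\lambda_2\ne 0$) it is a half-line. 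Since $\deg\Delta=2k+1$ with negative leading coefficient, $\Delta>0$ near $y=-\infty$ and $\Delta<0$ near $y=+\infty$, which reproduces exactly the two asymptotic sectors (both over $y<0$) and the absence of any sector over $y>0$.

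Next I would identify the non-discriminant condition with $\Delta$ having only simple real roots and no root at $y=0$ (the latter meaning $\lambda_2\ne0$; the locus $\lambda_2=0$ is disposed of separately, being generically off $\Sigma$, with the $x$-symmetry there absorbed below). Under this condition all vertical tangencies of $\{f_\lambda=0\}$ are simple folds, and the topological type of $W(\lambda)$ is encoded by how the real roots of $\Delta$ are arranged relative to $y=0$, i.e. by the pair $(a,b)$ where $a$, respectively $b$, is the number of negative, respectively positive, real roots. Translating the fibre dictionary into features of $W(\lambda)$: a bounded interval of $\{\Delta>0\}$ with $y>0$ is an oval with $f<0$ inside (a ``$-$''-oval); a bounded interval of $\{\Delta>0\}$ with $y<0$ is a bounded ``$+$''-region (a ``$+$''-oval); and an interval of $\{\Delta<0\}$ with $y<0$ is a full vertical strip merging the two sectors. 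Counting sign changes ($\Delta(-\infty)>0$, $\Delta(0)=\lambda_2^2>0$, $\Delta(+\infty)<0$) gives $a$ even and $b$ odd, with $m=(b-1)/2$ many $-$-ovals and, when $a\ge2$, with $p=a/2-1$ many $+$-ovals and $a/2$ strips joining the sectors.

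The two families then separate by the single discrete invariant ``are the two sectors in one component of $W(\lambda)$?''. When $a=0$ there is no strip, the sectors stay in different components, and one obtains Case I with $m=(b-1)/2$ ovals; the ``main band'' over $(0,\rho_1)$ that meets $y=0$ is a contractible appendage of a sector and can be absorbed, so the side it is attached to --- and hence the sign of $\lambda_2$ --- does not affect the type, which is why there is no mirror-doubling. When $a\ge2$ the sectors merge and one obtains Case II with invariants $(p,m)$. The admissible ranges come from the two vanishing leading coefficients of $\Delta$: the absence of the $y^{2k}$ and $y^{2k-1}$ terms forces $\sum\rho_i=0$ and $\sum\rho_i^2=0$ over all complex roots $\rho_i$, so not all roots can be real and the number $s=a+b$ of real roots satisfies $s\le 2k-1$. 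This yields $b\in\{1,3,\dots,2k-1\}$ in Case I ($k$ types, $m=0,\dots,k-1$) and $p+m=(s-3)/2\le k-2$ in Case II, i.e. $\binom{k}{2}$ types, for a total of $k+\binom{k}{2}=\binom{k+1}{2}=\frac{k(k+1)}{2}$, matching Table \ref{t1}.

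The main obstacle is the passage from the fibre data to the actual diffeomorphism type: one must check that $(a,b)$ (equivalently $(p,m)$ together with the sector-connectivity) is a \emph{complete} invariant, i.e. that the forced, strictly alternating left-to-right arrangement of $+$-ovals (all over $y<0$), strips, and $-$-ovals (all over $y>0$) admits no further nesting or linking detectable by a sector-fixing orientation-preserving diffeomorphism; this is the handle-by-handle reconstruction of $W(\lambda)$ as $y$ sweeps $\R$, where the flap-absorption argument is the delicate point. The complementary task, that every admissible $(a,b)$ is realized by some non-discriminant $\lambda$, reduces to producing a polynomial $\Delta$ of the prescribed shape with the desired real roots: one places the $s\le 2k-1$ real roots freely and uses a single complex-conjugate pair to meet the two trace constraints $\sum\rho_i=\sum\rho_i^2=0$. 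Together with the easy implication that points of one component of $\R^\mu\setminus\Sigma$ give equivalent sets $W(\lambda)$, and with Theorem \ref{mthm2} for the converse, this produces exactly the stated list.
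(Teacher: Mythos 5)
Your proposal is correct in substance, but it follows a genuinely different route from the paper. The paper proves Proposition \ref{d45} in two separate steps: in \S \ref{realiz} each of the $k+{k \choose 2}$ types is realized by an explicit degenerate perturbation $\tilde f_j$ (Fig.~\ref{d8p9}, right) whose saddlepoints are then pushed off the zero level with prescribed signs via interpolation/versality; and completeness is obtained only indirectly in \S \ref{esti}, by computing $\bar H_*(\Sigma,\Z_2)$ through a simplicial resolution and a spectral sequence (Theorem \ref{homdm}) and applying Alexander duality (\ref{alex}): the number of components of $\R^{2k-1}\setminus\Sigma$ comes out equal to $\frac{k(k+1)}{2}$, which is exactly the number of pairwise distinct realized types, so no further types can exist. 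You instead classify the types directly and elementarily: since (\ref{dvd}) is quadratic in $x$, everything is governed by the one-variable discriminant $\Delta(y)=\lambda_2^2-4yg(y)$; the admissible root arrangements $(a,b)$ are cut down by the vanishing of the coefficients of $y^{2k}$ and $y^{2k-1}$ (Newton's identities give $\sum\rho_i=\sum\rho_i^2=0$, hence at most $2k-1$ real roots), and every admissible arrangement is realized by placing roots. Amusingly, the paper uses the very same quadratic-discriminant device --- formula (\ref{diss}) in the proof of Proposition \ref{hyperb} --- but only inside the homology computation, never for a direct classification. What each approach buys: yours is self-contained, avoids spectral sequences, and makes the bound $p+m\le k-2$ transparent; the paper's machinery buys more, namely the count of \emph{components}, which your argument does not give (it cannot decide, for instance, whether the $\lambda_2>0$ and $\lambda_2<0$ realizations of one type lie in a single component of $\R^{2k-1}\setminus\Sigma$), and that count is what feeds Theorems \ref{mthm} and \ref{mthm2}.

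Three points need tightening. First, the step you yourself flag --- that $(a,b)$ is a complete invariant --- must actually be carried out: since the strips, the $+$-ovals and the $-$-ovals occur in a forced, unnested linear order along the $y$-axis, a fibrewise correspondence can be smoothed to an ambient orientation-preserving diffeomorphism, and the band over $[0,\rho_1]$ is absorbed by an isotopy supported near it (this is also what kills the dependence on the sign of $\lambda_2$ and explains why, unlike case II of Proposition \ref{d2kmmm}, there is no mirror doubling here); this is standard plane topology but it is the load-bearing step. Second, the reduction to $\lambda_2\neq0$ should be phrased correctly: non-discriminant points with $\lambda_2=0$ do exist, and what disposes of them is that components of $\R^{2k-1}\setminus\Sigma$ are open and the topological type is locally constant, so every component meets $\{\lambda_2\neq0\}$ --- not that this locus is ``generically off $\Sigma$''. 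Third, drop the appeal to Theorem \ref{mthm2}: it is not needed for the statement (which concerns types only), and in the paper that theorem is itself deduced from Theorem \ref{homdm} together with this proposition, so invoking it here would be circular. Finally, in the realization step, when $s<2k-1$ one conjugate pair is not enough; place the extra pairs purely imaginary and small, and then the single adjusting pair $u\pm iv$ exists because $v^2=u^2+p_2'/2>0$ once the remaining roots have positive second power sum.
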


\subsection{Singularities $E_\mu$}

Zero sets and distribution of signs of functions of classes   $+E_6, E_7$, and $E_8$ look respectively as \ \ 
\begin{picture}(11,7) 
\bezier{70}(0,0)(5,6)(10,0)
\put(5,3){\circle*{0.8}}
\put(3.5,0){$-$}
\put(3.5,5){$+$}
\end{picture} \ , \ \ 
\begin{picture}(17,8)
\put(0,4){\line(1,0){15}}
\bezier{70}(0,0)(5,4)(13,4)
\bezier{70}(0,8)(5,4)(13,4)
\put(11,6){{\small $+$}}
\put(11,0){$-$}
\put(0,1.5){{\small $+$}}
\put(0,4.3){$-$}
\put(11.5,4){\circle*{0.8}}
\end{picture} ,  and  \ \begin{picture}(12,7) 
\bezier{50}(5,3)(10,3)(10,0)
\bezier{50}(5,3)(0,3)(0,6)
\put(5,3){\circle*{0.8}}
\put(6.5,4){$+$}
\put(0.5,0){$-$}
\end{picture} .
Case $-E_6$ can be reduced to $+E_6$ by the multiplication of all functions (\ref{e6vd}) by $-1$ and simultaneous reflection $x \mapsto -x$. 

\begin{figure}
\unitlength 1mm
\begin{center}
{
\begin{picture}(20,30)
\bezier{100}(0,12)(10,18)(20,12)
\put(8.5,5){$-$}
\put(9,26){$+$}
\put(0,0){\line(1,0){20}}
\put(0,0){\line(0,1){30}}
\put(20,30){\line(-1,0){20}}
\put(20,30){\line(0,-1){30}}
\end{picture} \quad 
\begin{picture}(20,30)
\bezier{100}(0,12)(10,18)(20,12)
\put(10,19){\circle{5}}
\put(8.5,18){$-$}
\put(8.5,5){$-$}
\put(8.5,26){$+$}
\put(0,0){\line(1,0){20}}
\put(0,0){\line(0,1){30}}
\put(20,30){\line(-1,0){20}}
\put(20,30){\line(0,-1){30}}
\end{picture} \quad 
\begin{picture}(20,30)
\bezier{100}(0,12)(10,18)(20,12)
\put(3.5,18){$-$}
\put(14,19){\circle{5}}
\put(5,19){\circle{5}}
\put(12.5,18){$-$}
\put(8.5,5){$-$}
\put(9,26){$+$}
\put(0,0){\line(1,0){20}}
\put(0,0){\line(0,1){30}}
\put(20,30){\line(-1,0){20}}
\put(20,30){\line(0,-1){30}}
\end{picture} \quad 
\begin{picture}(20,30)
\bezier{100}(0,12)(10,18)(20,12)
\put(4,19){\circle{4}}
\put(10,19){\circle{4}}
\put(16,19){\circle{4}}
\put(2.5,18){$-$}
\put(8.5,18){$-$}
\put(14.5,18){$-$}
\put(8.5,5){$-$}
\put(9,26){$+$}
\put(0,0){\line(1,0){20}}
\put(0,0){\line(0,1){30}}
\put(20,30){\line(-1,0){20}}
\put(20,30){\line(0,-1){30}}
\end{picture} \quad 
\begin{picture}(20,30)
\bezier{100}(0,12)(10,18)(20,12)
\put(10,8){\circle{5}}
\put(8.5,7){$+$}
\put(8.5,1){$-$}
\put(8.5,26){$+$}
\put(0,0){\line(1,0){20}}
\put(0,0){\line(0,1){30}}
\put(20,30){\line(-1,0){20}}
\put(20,30){\line(0,-1){30}}
\end{picture}
}
\end{center}
\caption{$+E_6$}
\label{e61}
\end{figure}
\begin{proposition}
\label{e6pro}
There are exactly five topological types of sets $W(\lambda)$, where $\lambda$ are non-discriminant  parameter values of deformation $($\ref{e6vd}$)$ of a function of class $+E_6$; these types are shown in Fig.~\ref{e61}. 
\end{proposition}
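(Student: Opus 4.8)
The plan is to exploit the special shape of the deformation. Writing \(f_\lambda(x,y)=x^3+P(y)\,x+Q(y)\) with \(P(y)=\lambda_2+\lambda_4 y+\lambda_6 y^2\) and \(Q(y)=y^4+\lambda_5 y^2+\lambda_3 y+\lambda_1\), each vertical slice \(W(\lambda)\cap\{y=\mathrm{const}\}\) is the solution set of a cubic in \(x\) with leading coefficient \(1\): it is either a single ray \((-\infty,r_1]\) (one real root) or a ray together with a bounded segment \((-\infty,r_1]\cup[r_2,r_3]\) (three real roots). First I would record that the ray is present for every \(y\), so its union over \(y\) is the single unbounded region filling the unique asymptotic sector of \(f\); all bounded complementary features (the \(-\)ovals and the \(+\)holes of the pictures) come from the segment \([r_2,r_3]\), which exists precisely on the set \(\{\Delta>0\}\), where \(\Delta(y)=-4P(y)^3-27Q(y)^2\) is the discriminant of the cubic.

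Next I would analyse the folds, i.e. the points where a vertical line is tangent to the smooth curve \(C=f_\lambda^{-1}(0)\); these are the zeros of \(\Delta\), and off a codimension-\(2\) stratum they are nondegenerate, so (after a perturbation keeping \(\lambda\) in its component, legitimate by Theorem~\ref{mthm2}) the topological type of \(W(\lambda)\) is read off from the ordered sequence of these folds. Since the cubic has no \(x^2\)-term its roots sum to zero, so at a fold the double root is \(s=-3Q/(2P)\) and the third root is \(-2s\); combined with \(\Delta>0\Rightarrow P<0\) this yields a clean dictionary: a fold is an \emph{upper} merge \(r_2=r_3\) exactly when \(Q>0\) there, and a \emph{lower} merge \(r_1=r_2\) exactly when \(Q<0\). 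Consequently a maximal interval of \(\{\Delta>0\}\) whose two endpoints both have \(Q>0\) contributes a floating \(-\)oval, one with both endpoints in \(\{Q<0\}\) contributes an enclosed \(+\)hole, and a mixed interval contributes only a peninsula of the main region that changes nothing topologically. Because \(P\) is quadratic, \(\{\Delta>0\}\subseteq\{P<0\}\) has few components, while \(Q\) being a quartic with positive leading coefficient makes \(\{Q<0\}\) a union of at most two bounded intervals; feeding these degree bounds into the dictionary produces the candidate list — the empty configuration, one, two or three \(-\)ovals, or a single \(+\)hole.

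The remaining work, which I expect to be the hard part, is to show that this candidate list coincides with Fig.~\ref{e61} and contains nothing more: one must rule out four or more ovals, two or more holes, nested ovals, and — most delicately — the simultaneous presence of an oval and a hole, none of which is excluded by the crude degree count alone, since \(\Delta\) is a constrained rather than an arbitrary degree-\(8\) polynomial. For this I would (i) realise each of the five types by an explicit perturbation, as carried out in \S\ref{realiz}, so that at least five pairwise inequivalent types occur (inequivalence is immediate, because an orientation-preserving diffeomorphism fixing the unique asymptotic sector preserves both the number of \(-\)ovals and the number of \(+\)holes); and (ii) bound the number of components of \(\R^6\setminus\Sigma\) from above by the count of \(M_\lambda\)-conjugacy classes of involutions in the coset \(M\sigma\) furnished by Theorem~\ref{1.7.2.thm} and Proposition~\ref{1.7.2.prop}, a purely group-theoretic quantity equal to five for \(E_6\). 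Matching (i) and (ii) through Theorem~\ref{mthm2} forces the five realised types to exhaust all components and excludes every spurious combination; the careful form of this exclusion is exactly what \S\ref{estie} must supply.
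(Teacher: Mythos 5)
Your slice analysis (writing $f_\lambda=x^3+P(y)x+Q(y)$, reading the type of $W(\lambda)$ off the sign pattern of $Q$ at the zeros of the cubic discriminant $\Delta=-4P^3-27Q^2$) is correct as far as it goes, and it is a more elementary route to constraining candidate types than anything in the paper; but, as you concede, it does not close the list, so the whole proof rests on your step (ii). That step contains the genuine gap: you assert that the number of $M_\lambda$-conjugacy classes of involutions in the coset $M\sigma$ is ``a purely group-theoretic quantity equal to five for $E_6$,'' but you neither prove this nor could you cite it from the paper, which states the opposite dependence --- ``our results solve also the problem of the enumeration of conjugacy classes mentioned in the last theorem.'' In other words, within this paper the count of involution classes is an \emph{output} of the enumeration of components, not an available input; using it as input is circular relative to the paper, and using it as an external fact would require real work that your proposal omits. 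Concretely, two things are missing: (a) identifying which of the two cosets of $N/M\simeq\Z_2$ contains the class of complex conjugation for the real form $+E_6$ --- this is not pure group theory, it depends on the real structure (compare $+E_6$ versus $-E_6$); and (b) enumerating, up to $M_\lambda$-conjugacy, the involutions of the lattice $\bigl(H_2(V_\lambda,\Z),\langle\cdot,\cdot\rangle\bigr)$ lying in that coset --- note these range over the full orthogonal group of the lattice intersected with the coset, not merely over elements of the Weyl group, so even the relevant set of involutions is not immediately a list of Weyl-group classes.

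For comparison, the paper's actual proof of this proposition is the conjunction of the realization of the five types in \S\ref{realiz}, Proposition \ref{prog} (a computer enumeration of virtual morsifications: $456=208+138+74+18+18$, showing there are exactly five \emph{equivalence classes} of components, separated by the two basic invariants), and Theorem \ref{the9} (each equivalence class is a single component, proved via contractibility of components, the ordered Looijenga map and its properness, and an analysis of boundary strata of the basic component). Looijenga's Theorem \ref{1.7.2.thm} enters that argument only for the one component on which complex conjugation acts trivially on $H_2(V_\lambda,\Z)$ --- where the relevant conjugacy class is the identity and the count is trivially one --- precisely so as to avoid the full conjugacy-class enumeration that your proposal takes for granted. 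Until you supply proofs of (a) and (b) above, your argument does not establish the upper bound of five, and hence does not prove the proposition.
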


\begin{figure}
\unitlength 1mm
\begin{center}
{
\begin{picture}(20,30)
\bezier{100}(0,15)(10,18)(0,25)
\bezier{200}(0,5)(10,15)(20,15)
\put(0.8,18){$-$}
\put(14,2){$-$}
\put(14,25){$+$}
\put(0,0){\line(1,0){20}}
\put(0,0){\line(0,1){30}}
\put(20,30){\line(-1,0){20}}
\put(20,30){\line(0,-1){30}}
\end{picture} \quad
\unitlength 1mm
\begin{picture}(20,30)
\put(0.8,18){$-$}
\put(14,2){$-$}
\put(14,25){$+$}
\bezier{100}(0,15)(10,18)(0,25)
\bezier{200}(0,5)(10,15)(20,15)
\put(11,19){\circle{5}}
\put(9.5,18){$-$}
\put(0,0){\line(1,0){20}}
\put(0,0){\line(0,1){30}}
\put(20,30){\line(-1,0){20}}
\put(20,30){\line(0,-1){30}}
\end{picture} \quad
\unitlength 1mm
\begin{picture}(20,30)
\put(0.8,18){$-$}
\put(14,2){$-$}
\put(14,25){$+$}
\put(10,17){\circle{5}}
\put(8.5,16){$-$}
\put(17,20){\circle{5}}
\put(15.5,19){$-$}
\bezier{100}(0,15)(10,18)(0,25)
\bezier{200}(0,5)(10,15)(20,15)
\put(0,0){\line(1,0){20}}
\put(0,0){\line(0,1){30}}
\put(20,30){\line(-1,0){20}}
\put(20,30){\line(0,-1){30}}
\end{picture} \quad
\unitlength 1mm
\begin{picture}(20,30)
\put(0.8,18){$-$}
\put(14,2){$-$}
\put(14,25){$+$}
\put(8,17){\circle{4}}
\put(6.5,16){$-$}
\put(18,20){\circle{4}}
\put(16.5,19){$-$}
\put(13,19){\circle{4}}
\put(11.5,18){$-$}
\bezier{100}(0,15)(9,18)(0,25)
\bezier{200}(0,5)(10,15)(20,15)
\put(0,0){\line(1,0){20}}
\put(0,0){\line(0,1){30}}
\put(20,30){\line(-1,0){20}}
\put(20,30){\line(0,-1){30}}
\end{picture} \quad
\unitlength 1mm
\begin{picture}(20,30)
\bezier{100}(0,15)(10,18)(0,25)
\bezier{200}(0,5)(10,15)(20,15)
\put(11,7){\circle{5}}
\put(9.5,6){${+}$}
\put(14,2){$-$}
\put(14,25){$+$}
\put(0.8,18){$-$}
\put(0,0){\line(1,0){20}}
\put(0,0){\line(0,1){30}}
\put(20,30){\line(-1,0){20}}
\put(20,30){\line(0,-1){30}}
\end{picture} 
}
\end{center}
\caption{$E_7$}
\label{e71}
\end{figure}
\begin{proposition}
\label{e7pro}
The set of all  topological types of sets $W(\lambda)$, where $\lambda$ are non-dis\-cri\-mi\-nant parameter values of deformation $($\ref{e7vd}$)$, consists of five types shown in Fig.~\ref{e71} and other five types obtained from these ones by reflection in a horizontal mirror and simultaneous change of all signs \ $+$ \ and \ $-$ \ .
\end{proposition}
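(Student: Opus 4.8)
The plan is to reduce the enumeration of topological types to the enumeration of components of $\R^{7}\sm\Sigma$, and to determine that number by matching a lower bound from explicit realization against an upper bound from Looijenga's theory. By the easy implication recorded just before Theorem~\ref{mthm2}, the topological type of $W(\lambda)$ is constant on each component, so it defines a map from components to types; Theorem~\ref{mthm2} asserts that this map is injective. Consequently the number of components equals the number of \emph{occurring} types, and it suffices to show that exactly ten types occur and that they are the ten of Fig.~\ref{e71} together with their reflections.

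First I would use the self-symmetry of the family. Since $f(-x,y)=-f(x,y)$ for $f=x^{3}+xy^{3}$, the substitution $(x,y)\mapsto(-x,y)$ followed by multiplication by $-1$ carries the deformation (\ref{e7vd}) with parameter $\lambda=(\lambda_{1},\dots,\lambda_{7})$ to the same deformation with parameter $\lambda'=(-\lambda_{1},\lambda_{2},-\lambda_{3},\lambda_{4},-\lambda_{5},-\lambda_{6},-\lambda_{7})$. On lower sets this sends $W(\lambda)$ to the horizontal reflection of the closure of its complement, i.e.\ it reflects the picture and interchanges the marks $+$ and $-$. This linear involution preserves $\Sigma$, hence acts on components and on types. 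Because admissible equivalences may not permute the two asymptotic sectors, the reflected configurations are genuinely new types; one checks directly that this involution is fixed-point-free on the five drawn pictures (it reverses their background sign pattern), so they account for ten pairwise distinct types, distinguishable by the number and sign of ovals and by the connection pattern of the asymptotic sectors.

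For the upper bound I would invoke Theorem~\ref{1.7.2.thm}: components of $\R^{7}\sm\Sigma$ correspond bijectively to $M_{\lambda}$-conjugacy classes of involutions in the coset $M\sigma$. By Proposition~\ref{1.7.2.prop} the group $N/M$ is trivial for $E_{7}$, so $\sigma\in M=W(E_{7})$ and the coset $M\sigma$ is all of $W(E_{7})$; the count becomes that of conjugacy classes of involutions (the identity included) in $W(E_{7})$. Using the classical isomorphism $W(E_{7})\simeq\Z_{2}\times\mathrm{Sp}_{6}(2)$, with the central $\Z_{2}$ generated by $-\mathrm{id}=w_{0}$, an element with square the identity is a pair $(\epsilon,g)$, $g^{2}=1$ in $\mathrm{Sp}_{6}(2)$. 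Since $\mathrm{Sp}_{6}(2)$ has exactly five classes of such $g$ (the identity together with $2A,2B,2C,2D$), there are $2\cdot 5=10$ classes, hence exactly ten components.

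The remaining work, and the main obstacle, is the explicit realization carried out in \S\ref{realiz}. For each of the five pictures one must exhibit a concrete non-discriminant $\lambda$ whose zero curve $f_{\lambda}^{-1}(0)$—a quartic perturbing the line-plus-cusp $x(x^{2}+y^{3})=0$ and meeting the line at infinity in the prescribed two points—has precisely the drawn arrangement of arcs and ovals, with the correct signs (zero through three negative ovals over the cusp in the first four types, a single positive oval below in the fifth). The delicate point is controlling the \emph{number} and sign of the ovals; the natural route is to start from a morsification that factors the deformation through a simpler singularity plus a transverse direction and to track how critical points of small negative or positive critical value create ovals of the corresponding sign. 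Once these ten distinct types are realized, the lower bound of ten occurring types and the upper bound of ten components from the previous paragraph coincide, so by the injectivity of Theorem~\ref{mthm2} these are \emph{all} the types, each confined to a single component. In particular the oval bound—no configuration with four or more ovals—needs no separate argument, being forced by the exact count: any further type would contradict the equality of the two estimates.
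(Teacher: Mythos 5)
Your argument is sound, but its second half follows a genuinely different route from the paper's. The two proofs share the realization step: like the paper, you need the ten types exhibited by explicit perturbations, and here you essentially defer to \S\ref{realiz} (the A'Campo--Gusein-Zade nodal morsifications of Fig.~\ref{e7proof}, with signs assigned to the critical values at the nodes via versality and interpolation); your sketch of this step is vaguer than the paper's, but it is the same mechanism. The divergence is in proving that no further types exist. The paper never counts involution classes: it runs a computer enumeration of virtual morsifications (Proposition \ref{prog}), which splits the components into exactly ten equivalence classes separated by the two basic invariants, and then shows each class is a singleton (Theorem \ref{the9}) by an analysis of boundary strata of the basic component, using contractibility (Theorem \ref{lotriv}) and properness of the Looijenga map. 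You instead evaluate the right-hand side of Looijenga's bijection (Theorem \ref{1.7.2.thm}) directly: since $N/M$ is trivial for $E_7$ (Proposition \ref{1.7.2.prop}), the coset $M\sigma$ equals $M$, which is the Weyl group of type $E_7$, isomorphic to $\Z_2\times \mathrm{Sp}_6(2)$; its elements of order at most two fall into $2\cdot 5=10$ conjugacy classes, so there are exactly ten components. This count is correct (cross-check: the involution classes of that Weyl group are the identity, $-\mathrm{id}$, and the $kA_1$-classes, $1\le k\le 6$, with $k=3$ and $k=4$ each splitting in two --- again ten), so your upper bound is valid. What your route buys: no computer program and no strata analysis, just a short conceptual count. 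What it costs: reliance on facts the paper does not prove (that the monodromy group is the full Weyl group, and the ATLAS data for $\mathrm{Sp}_6(2)$), and it forfeits what the paper explicitly advertises --- that its lists give an independent solution of Looijenga's conjugacy-class enumeration rather than presupposing it.

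One genuine logical wrinkle: you invoke Theorem \ref{mthm2} twice, but in the paper that theorem (in the $E_\mu$ cases) is itself deduced from the completeness statement you are proving, so citing it is circular. Fortunately it is also unnecessary: with exactly ten components and ten realized, pairwise distinct types, the easy direction alone (the type is constant on every component) forces the component-to-type map to be a bijection, which yields Proposition \ref{e7pro} and, as a byproduct, the $E_7$ case of Theorem \ref{mthm2}. Strike the two appeals to injectivity and your proof is self-contained modulo the realization step.
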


\begin{figure}
\unitlength 1mm
\begin{center}
{
\begin{picture}(35,20)
\bezier{100}(0,15)(6,10)(15,10)
\bezier{100}(15,10)(20,10)(30,5)
\put(25,15){$+$}
\put(4,2){$-$}
\put(0,0){\line(1,0){30}}
\put(0,0){\line(0,1){20}}
\put(30,20){\line(-1,0){30}}
\put(30,20){\line(0,-1){20}}
\end{picture} \qquad
\begin{picture}(35,20)
\bezier{100}(0,15)(6,10)(15,10)
\bezier{100}(15,10)(20,10)(30,5)
\put(25,15){$+$}
\put(4,2){$-$}
\put(0,0){\line(1,0){30}}
\put(0,0){\line(0,1){20}}
\put(30,20){\line(-1,0){30}}
\put(30,20){\line(0,-1){20}}
\put(15,14){\circle{5}}
\put(13.5,13){$-$}
\end{picture} \qquad
\begin{picture}(35,20)
\bezier{100}(0,15)(6,10)(15,10)
\bezier{100}(15,10)(20,10)(30,5)
\put(25,15){$+$}
\put(4,2){$-$}
\put(0,0){\line(1,0){30}}
\put(0,0){\line(0,1){20}}
\put(30,20){\line(-1,0){30}}
\put(30,20){\line(0,-1){20}}
\put(15,14){\circle{5}}
\put(13.5,13){$-$}
\put(7,15){\circle{5}}
\put(5.5,14){$-$}
\end{picture} \quad
\begin{picture}(35,25)
\bezier{100}(0,15)(6,10)(15,10)
\bezier{100}(15,10)(20,10)(30,5)
\put(25,16){$+$}
\put(4,2){$-$}
\put(0,0){\line(1,0){30}}
\put(0,0){\line(0,1){20}}
\put(30,20){\line(-1,0){30}}
\put(30,20){\line(0,-1){20}}
\put(15,14){\circle{5}}
\put(13.5,13){$-$}
\put(7,15){\circle{5}}
\put(5.5,14){$-$}
\put(22,13){\circle{5}}
\put(20.5,12){$-$}
\end{picture} \qquad
\begin{picture}(35,25)
\bezier{100}(0,15)(6,10)(15,10)
\bezier{100}(15,10)(20,10)(30,5)
\put(25,16){$+$}
\put(4,2){$-$}
\put(0,0){\line(1,0){30}}
\put(0,0){\line(0,1){20}}
\put(30,20){\line(-1,0){30}}
\put(30,20){\line(0,-1){20}}
\put(13,14){\circle{5}}
\put(11.5,13){$-$}
\put(7,15){\circle{5}}
\put(5.5,14){$-$}
\put(20,13){\circle{5}}
\put(18.5,12){$-$}
\put(26.5,11){\circle{5}}
\put(25,10){$-$}
\end{picture} \qquad
\begin{picture}(35,25)
\bezier{100}(0,15)(6,10)(15,10)
\bezier{100}(15,10)(20,10)(30,5)
\put(3,16){$+$}
\put(25,2){$-$}
\put(0,0){\line(1,0){30}}
\put(0,0){\line(0,1){20}}
\put(30,20){\line(-1,0){30}}
\put(30,20){\line(0,-1){20}}
\put(20,15){\circle{5}}
\put(18.5,14){$-$}
\put(10,5){\circle{5}}
\put(8.5,4){$+$}
\end{picture} 
}
\end{center}
\caption{$E_8$}
\label{e81}
\end{figure}
\begin{proposition}
\label{e8pro}
The set of all topological types of sets $W(\lambda)$, where $\lambda$ are non-dis\-cri\-mi\-nant  parameter values of deformation $($\ref{e8vd}$)$, consists of six types shown in Fig.~\ref{e81} and other four types obtained by the central symmetry and simultaneous change of all signs $\pm$ from the four types shown in  Fig.~\ref{e81} which are not invariant under this operation.
\end{proposition}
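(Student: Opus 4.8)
The plan is to split Proposition~\ref{e8pro} into a realization half and a completeness half. By Theorem~\ref{mthm2} two non-discriminant parameters with topologically equivalent sets $W$ lie in one component, so distinct realized topological types automatically sit in distinct components, and the realized types are in bijection with the components meeting them. In particular I cannot simply read the number $10$ off Looijenga's Theorem~\ref{1.7.2.thm}, since producing that number is the very point of the enumeration; the completeness half must therefore rest on an independent upper bound on the number of types.

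The decisive simplification is that deformation~(\ref{e8vd}) is affine in $x$ beyond the pure cubic, so I would write
\[
f_\lambda(x,y)=x^3+A(y)\,x+B(y),\quad A=\lambda_2+\lambda_4y+\lambda_6y^2+\lambda_8y^3,\quad B=y^5+\lambda_7y^3+\lambda_5y^2+\lambda_3y+\lambda_1 .
\]
Thus $\{f_\lambda=0\}$ is a trigonal curve: over each $y$ its points are the real roots of a depressed cubic in $x$, three of them where the cubic discriminant $\delta(y)=-4A(y)^3-27B(y)^2$ is positive and one where it is negative. As $\delta$ has degree $10$ with negative leading coefficient, it is negative for $|y|$ large, so the curve has a single unbounded branch (in accordance with the one asymptotic sector of $\{f<0\}$) and its compact ovals are exactly the bounded components of $\{\delta>0\}$. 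Examining a fold, i.e.\ a simple zero of $\delta$ where the cubic acquires a double root $x_0$ with $A=-3x_0^2$ and $B=2x_0^3$, shows that the two colliding branches lie above the surviving branch when $B>0$ and below it when $B<0$. Hence an arc of $\{\delta>0\}$ on which $B$ keeps one sign closes into an oval --- a negative oval in the positive region if $B>0$, a positive oval in the negative region if $B<0$ --- while an arc crossing a zero of $B$ only makes the main branch wiggle. This dictionary reproduces exactly the ovals drawn in Fig.~\ref{e81}.

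For the completeness half I would first bound the ovals by the critical points of $f_\lambda$. A Poincar\'e--Hopf count gives $m_0-m_1+m_2=0$ for the numbers of minima, saddles and maxima of any morsification (the gradient of $x^3+y^5$ has both components of even degree, so its index at the origin vanishes); together with $m_0+m_1+m_2\le\mu=8$ this yields $m_0+m_2\le4$, so there are at most four extrema in all. Since a negative oval encloses a minimum of negative value and a positive oval a maximum of positive value, the total number of ovals is at most four, the length realized by the chains of Fig.~\ref{e81}. The ten types would then be realized by explicit choices of $(A,B)$ (equivalently, explicit real morsifications) giving the chains of $0,1,2,3,4$ negative ovals, while the central symmetry $(x,y)\mapsto(-x,-y)$, which carries $f_\lambda$ to $-f_{\lambda'}$ for a transformed parameter $\lambda'$ and hence interchanges the two signs, turns these into the chains of positive ovals and fixes the self-symmetric pictures~1 and~6; this halves the work and supplies the four additional types. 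The step I expect to be the genuine obstacle is closing the gap between this crude bound and the list: $m_0+m_2\le4$ would by itself also allow, say, two positive together with two negative ovals, and excluding every mixed configuration other than the single pair of picture~6 requires the finer interplay of $A$ and $B$ --- controlling how the bounded arcs of $\{\delta>0\}$ distribute among $\{B>0\}$ and $\{B<0\}$ for the admissible degrees --- reinforced by the contractibility of the components from Theorem~\ref{lotriv}. Carrying this out case by case is the analysis deferred to \S\ref{estie}.
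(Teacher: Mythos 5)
Your proposal correctly identifies the two halves (realization and completeness) and your trigonal-curve dictionary $f_\lambda=x^3+A(y)x+B(y)$, together with the Poincar\'e--Hopf count giving $m_0+m_2\le 4$, is sound as far as it goes. But the completeness half has a genuine gap, and you say so yourself: the bound of at most four ovals does not exclude configurations such as two positive plus two negative ovals, or one positive and two negative, etc. Moreover, completeness requires more than pinning down the number and signs of ovals: the topological type (equivalence under orientation-preserving diffeomorphisms of $\R^2$ fixing the asymptotic sector) also records the mutual arrangement of the ovals --- on which side of the unbounded branch they lie, their order along it, and possible nestings --- so even after the sign pattern is fixed one must still rule out or identify distinct arrangements. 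You defer exactly this decisive step ``to \S\ref{estie}'', but that deferral does not close the argument, and it also mischaracterizes the paper: \S\ref{estie} contains no case-by-case analysis of the interplay of $A$ and $B$. The realization half is likewise only asserted (``would then be realized by explicit choices of $(A,B)$'') rather than exhibited; the paper instead starts from concrete A'Campo--Gusein-Zade morsifications (Fig.~\ref{e8proof}) and produces all ten types by the interpolation/versality argument that prescribes signs of critical values at the marked double points.

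The paper's completeness proof is entirely different in kind: it is a count of \emph{components}, not of curves. Proposition~\ref{prog} uses the virtual-morsification computer program to show that the components of $\R^8\setminus\Sigma$ fall into exactly ten equivalence classes, separated by the two basic invariants, with the ten constructed morsifications lying in distinct classes; Theorem~\ref{the9} then shows each class is a single component, by an analysis of boundary strata of the basic component using the properness of the Looijenga map, contractibility (Theorem~\ref{lotriv}), and Theorem~\ref{1.7.2.thm}. Since $W(\lambda)$ has constant type on each component, every non-discriminant $\lambda$ then realizes one of the ten listed types. Note also a circularity hazard in your opening move: Theorem~\ref{mthm2} for the $E_\mu$ singularities is, in the paper, itself a corollary of Proposition~\ref{prog} and Theorem~\ref{the9}, so it cannot be taken as an independent input to a proof of Proposition~\ref{e8pro} unless you prove it by other means; fortunately your use of it (types versus components) is inessential for the statement about types, but the essential exclusion step remains missing.
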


\begin{remark} \rm
Some pictures of topological schemes of Figs.~\ref{e6pro}--\ref{e8pro} are geometrically non-realistic: e.g. by Bezout theorem no line can pass through three ovals.
\end{remark}

\section{Realization}
\label{realiz}

In this section we prove that all the topological types of sets $W(\lambda)$ listed in \S \ref{types} can be realized within the corresponding versal deformations. 
\begin{figure}
\unitlength 0.80mm
\linethickness{0.4pt}
\begin{center}
\begin{picture}(119.00,25.00)
\put(58,9.3){\circle*{1.33}}
\put(58,18.5){\circle*{1.33}}
\put(7.00,14.00){\circle*{1.33}}
\put(27.00,14.00){\circle*{1.33}}
\put(47.00,14.00){\circle*{1.33}}
\put(58,2){\line(0,1){23}}
\put(53,13){$+$}
\put(60,13){$-$}
\bezier{120}(27.00,14.00)(17.00,3.00)(7.00,14.00)
\bezier{120}(27.00,14.00)(17.00,25.00)(7.00,14.00)
\bezier{52}(27.00,14.00)(30.00,19.00)(37.00,19.00)
\bezier{44}(37.00,19.00)(43.00,19.00)(46.50,14.50)
\bezier{44}(27.00,14.00)(30.00,9.00)(37.00,9.00)
\bezier{40}(37.00,9.00)(43.00,9.00)(46.50,13.50)
\bezier{60}(47.50,14.50)(57,23)(66.50,14.50)
\bezier{60}(47.50,13.50)(57,5)(66.50,13.50)
\put(6.50,14.50){\line(-1,1){5}}
\put(6.50,13.50){\line(-1,-1){5}}
\put(1,12.5){$+$}
\put(17.00,14.00){\makebox(0,0)[cc]{$+$}}
\put(37.00,14.00){\makebox(0,0)[cc]{$+$}}
\put(67.00,14.00){\circle*{1.33}}
\put(87.00,14.00){\circle*{1.33}}
\put(107.00,14.00){\circle*{1.33}}
\bezier{100}(67.50,14.50)(77.00,24.00)(86.50,14.50)
\bezier{100}(67.50,13.50)(77.00,4.00)(86.50,13.50)
\bezier{100}(87.50,14.50)(97.00,24.00)(106.50,14.50)
\bezier{100}(87.50,13.50)(97.00,4.00)(106.50,13.50)
\multiput(107.50,14.50)(0.12,0.12){50}{\line(0,1){0.12}}
\multiput(113.50,7.50)(-0.12,0.12){50}{\line(0,1){0.12}}
\put(111.00,14.00){\makebox(0,0)[cc]{$-$}}
\put(97.00,14.00){\makebox(0,0)[cc]{$-$}}
\put(77.00,14.00){\makebox(0,0)[cc]{$-$}}
\put(26,4){$-$}
\put(26,20){$-$}
\put(85,4){$+$}
\put(85,20){$+$}
\end{picture}
\end{center}
\caption{Preliminary perturbation for $D_{2k}^-$, \  $k=7$}
\label{d89}
\end{figure}
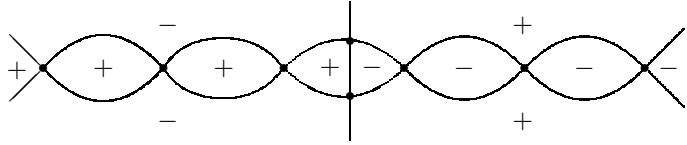

\subsection{$D_{2k}^-$} 
For any $j=0, \dots, k-1,$ the polynomial of type $D_{2k}^-$ indicated in Table \ref{t1} has a perturbation $\tilde f_j$, the set of zeros of which contains the vertical line $\{y=0\}$ and is topologically situated as
  shown in Fig.~\ref{d89}, where the number of intersection points of two non-vertical curves is equal to $k-1$, $j$ of these points lie to the left of the vertical line and remaining $k-1-j$ to the right of it. (In our picture $k=7$, $j=3$). Namely, in the versal deformation (\ref{dvd})   we need to set $\lambda_1=\lambda_2=0$ and choose remaining coefficients in such a way that the polynomial $\lambda_3+\lambda_4y + \dots + \lambda_{2k} y^{2k-3} - y^{2k-2}$ in one variable has $k-1$ different real double roots, exactly $j$ of which are negative.

Of course, this function $\tilde f_j$  belongs to the discriminant variety. Let us  somehow assign a sign $+$ or $-$ to any of its $k+1$ saddlepoints.  It follows easily from interpolation theorem and versality property (see e.g. Proposition 6.2 in \S~I.6 of \cite{APLT}) that for any such assignment we can perturb function $\tilde f_j$  arbitrarily weakly within our versal deformation in such a way that the  values of the new function at its critical points, to which all saddlepoints  of $\tilde f_j$ marked by $+$ (respectively, $-$) will move, become positive (respectively, negative). 

All desired non-discriminant perturbations will be obtained in this way from these $k$ functions $\tilde f_j$. Namely, to 
obtain the picture shown in Fig.~\ref{d80} (left)  with exactly $k-1$ ovals we place all intersection points of non-vertical curves
to the right of the vertical line (i.e. take $j=0$) and mark all $k+1$ intersection points of our curves by $+$ . To obtain the picture of the same Fig.~\ref{d80} (left) with $i < k-1$ ovals, we can replace this sign \ $+$ \ by \ $-$ \ at arbitrary $k-1-i$ \ intersection points of two non-vertical curves. The pictures of Fig.~\ref{d80} (right)  can be realized in a symmetric way, by placing all  intersection points of non-vertical components to the left of the vertical line and changing all the signs at the symmetric intersection points.
 
To obtain the picture of Fig.~\ref{d81} (left) we can choose $j$ arbitrarily, mark all intersection points to the right of the vertical line by $-$, all points to the left of it by $+$, the upper intersection points of this line by $+$ and the lower one by $-$ . Changing the markings of two last points we obtain the picture symmetric to it with respect to a horizontal line.  To obtain Fig.~\ref{d81} (right) with $p$ positive and $m$ negative ovals, $p + m \leq k-2$, we place exactly \ $p+1$ \ intersection points to the left of the  vertical line, mark by $+$ both saddlepoints on this line and exactly $m$ (arbitrary) intersection points of non-vertical curves to the right of it, and mark by $-$ all $p+1$ intersection points to the left of it and remaining \ $k-2-m-p$ \ points to the right of it.

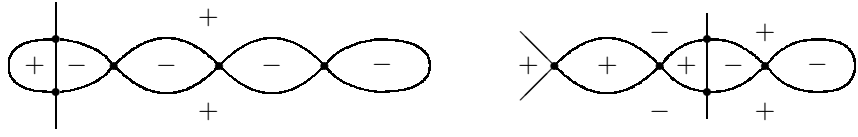
\begin{figure}
\unitlength 0.70mm
\linethickness{0.4pt}
\begin{center}
{
\begin{picture}(81.00,27.00)
\put(21.00,15.00){\circle*{1.33}}
\put(41.00,15.00){\circle*{1.33}}
\bezier{100}(40.50,14.50)(31.00,5.00)(21.50,14.50)
\bezier{100}(21.50,15.50)(31.00,25.00)(40.50,15.50)
\bezier{100}(60.50,14.50)(51.00,5.00)(41.50,14.50)
\bezier{100}(41.50,15.50)(51.00,25.00)(60.50,15.50)
\bezier{44}(61.50,15.50)(64.00,19.00)(71.00,20.00)
\bezier{44}(61.50,14.50)(64.00,11.00)(71.00,10.00)
\bezier{60}(71.00,10.00)(81.00,10.00)(81.00,15.00)
\bezier{60}(71.00,20.00)(81.00,20.00)(81.00,15.00)
\bezier{44}(20.50,14.50)(18.00,11.00)(11.00,10.00)
\bezier{44}(20.50,15.50)(18.00,19.00)(11.00,20.00)
\put(61.00,15.00){\circle*{1.33}}
\put(10.00,20.00){\circle*{1.33}}
\put(10.00,10.00){\circle*{1.33}}
\bezier{52}(9.33,10.00)(1.00,10.00)(1.00,15.00)
\bezier{52}(1.00,15.00)(1.00,20.00)(9.33,20.00)
\put(10.00,27.00){\line(0,-1){6.33}}
\put(10.00,19.33){\line(0,-1){8.66}}
\put(10.00,9.33){\line(0,-1){6.33}}
\put(6.00,15.00){\makebox(0,0)[cc]{$+$}}
\put(14.00,15.00){\makebox(0,0)[cc]{$-$}}
\put(31.00,15.00){\makebox(0,0)[cc]{$-$}}
\put(51.00,15.00){\makebox(0,0)[cc]{$-$}}
\put(70,14){$-$}
\put(37,5){$+$}
\put(37,23){$+$}
\end{picture} \qquad  
\begin{picture}(69.00,32.00)
\put(12.00,15.00){\circle*{1.33}}
\put(32.00,15.00){\circle*{1.33}}
\bezier{100}(31.50,15.50)(22.00,25.00)(12.50,15.50)
\bezier{100}(12.50,14.50)(22.00,5.00)(31.50,14.50)
\bezier{44}(32.50,14.50)(36.00,10.00)(42.00,10.00)
\bezier{40}(32.50,15.50)(36.00,20.00)(42.00,20.00)
\put(52.00,15.00){\circle*{1.33}}
\put(60,14){$-$}
\bezier{44}(52.50,14.50)(56,10)(62,10)
\bezier{40}(52.50,15.50)(56,20)(62,20)
\bezier{44}(51.50,14.50)(47,10)(42,10)
\bezier{40}(51.50,15.50)(47,20)(42,20)
\bezier{48}(62.00,20.00)(69.00,20.00)(69.00,15.00)
\bezier{48}(69.00,15.00)(69.00,10.00)(62.00,10.00)
\put(11.50,14.50){\line(-1,-1){6}}
\put(11.50,15.50){\line(-1,1){6}}
\put(41,5){\line(0,1){20}}
\put(41.00,10.00){\circle*{1.33}}
\put(41.00,20.00){\circle*{1.33}}
\put(7.00,15.00){\makebox(0,0)[cc]{$+$}}
\put(22.00,15.00){\makebox(0,0)[cc]{$+$}}
\put(37.00,15.00){\makebox(0,0)[cc]{$+$}}
\put(46.00,15.00){\makebox(0,0)[cc]{$-$}}
\put(30,5){$-$}
\put(30,20){$-$}
\put(50,5){$+$}
\put(50,20){$+$}
\end{picture}
}
\end{center}
\caption{Preliminary perturbations for $D_{2k}^+$ $(k=5)$ and $+D_{2k-1}$    $(k=5)$} 
\label{d8p9}
\end{figure}

\subsection{$D_{2k}^+$} For any $j=0, \dots, k-2,$ there is a perturbation $\tilde f_j$ of the function $f$ of this class, such that the set $\tilde f_j^{-1}(0)$ contains a vertical line and is topologically equivalent to the picture shown in Fig.~\ref{d8p9}  (left), with exactly $j$ self-intersection points of the non-vertical component to the left of the vertical line. (In our picture, we have $j=0$).
To realize Fig.~\ref{d84} with $p$ positive and $m$ negative ovals, $p + m \leq k-1$, we can take such a picture with $j=p$, mark by \ $+$ \ both intersection points of different components of its zero set and exactly $m-1$ self-intersection points to the right of the vertical line, and mark by \ $-$ \ all self-intersection points to the left of this line and also remaining \ $k-1-m-p$ \ self-intersection points to the right of it.

\subsection{$+D_{2k-1}$}
 For any $j=0, \dots, k-2,$ there is a perturbation $\tilde f_j$ of the polynomial of this class from Table \ref{t1}, zero set of which contains a vertical line and is topologically equivalent to the picture shown in Fig.~\ref{d8p9} (right), with $j$ self-intersection points of the non-vertical component to the left of the vertical line and \ $k-2-j$ \ such points to the right of it. (In our example $j=2$.) To realize Fig.~\ref{d9f} (left) with $p$ ovals, $p \in \{0, 1, \dots, k-1\}$, we can take $j=0$ and mark by \ $+$ \ both  intersection points of the vertical line with the non-vertical component, and exactly $p$ (arbitrary) self-intersection points of this component. To realize Fig.~\ref{d9f} (right) with $p$ positive and $m$ negative ovals, $p + m \leq k-2$, we can take the perturbation of Fig.~\ref{d8p9} with $j=p+1$, then  mark by \ $+$ \ both intersection points of the vertical line and exactly $m-1$ (arbitrary) self-intersection points of the other component lying to the right of the vertical line, and mark by \ $-$ \ all the remaining self-intersection points.
\begin{figure}
\begin{center}
{
\begin{picture}(50,26)
\bezier{300}(15,1)(40,31)(47,26)
\bezier{70}(47,26)(50,24)(47,22)
\bezier{250}(47,22)(25,14)(3,22)
\bezier{70}(3,22)(0,24)(3,26)
\bezier{300}(3,26)(10,31)(35,1)
\put(25.00,12.30){\circle*{1.33}}
\put(30.80,18.20){\circle*{1.33}}
\put(19.20,18.20){\circle*{1.33}}
\put(25.00,16.00){\makebox(0,0)[cc]{$-$}}
\put(40.00,22.00){\makebox(0,0)[cc]{$-$}}
\put(10.00,22.00){\makebox(0,0)[cc]{$-$}}
\put(24,2){$-$}
\end{picture}
\qquad \qquad \qquad
\begin{picture}(50,26)
\put(25.00,16.20){\circle*{1.33}}
\put(31.10,10.20){\circle*{1.33}}
\put(18.90,10.20){\circle*{1.33}}
\bezier{300}(10,1)(40,34)(47,26)
\bezier{70}(47,26)(50,24)(47,20)
\bezier{300}(47,20)(25,-1)(3,20)
\bezier{70}(3,20)(0,24)(3,26)
\bezier{300}(3,26)(10,34)(40,1)
\put(25.00,12.00){\makebox(0,0)[cc]{$+$}}
\put(39.00,20.00){\makebox(0,0)[cc]{$-$}}
\put(11.00,20.00){\makebox(0,0)[cc]{$-$}}
\put(24,2){$-$}
\end{picture}
}
\end{center}
\caption{Perturbations for $+E_6$}
\label{e6proof}
\end{figure}
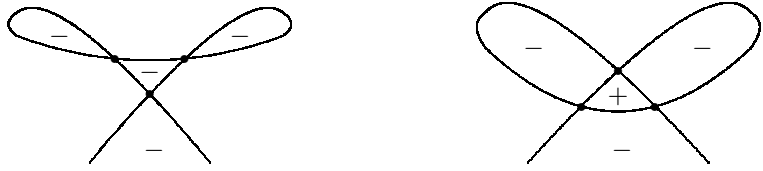
\subsection{$+ E_6$}
\label{e6reaa}
 By Examples 3 and 1 in \cite{GZ-1} (see also \cite{AC}), deformation (\ref{e6vd}) contains perturbations with zero sets topologically equivalent to the curves shown in Fig.~\ref{e6proof}. Acting as in the previous three subsections and marking by $+$ \ exactly 0, 1, 2 or 3 self-intersection points of this curve in Fig.~\ref{e6proof} (left) we obtain  topological types shown in the first four pictures of Fig.~\ref{e61}. Marking by $-$ all the self-intersection points in Fig.~\ref{e6proof} (right) we obtain the fifth picture of Fig.~\ref{e61}.

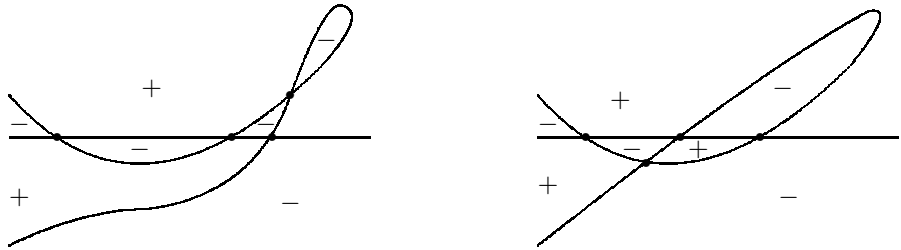
\begin{figure}
\begin{center}
{
\begin{picture}(60,40)
\put(0,18){\line(1,0){60}}
\bezier{120}(0,0)(10,5)(20,6)
\bezier{80}(20,6)(28,6)(35,10)
\bezier{250}(0,25)(22,0)(52,30)
\bezier{100}(52,30)(60,39)(55,40)
\bezier{100}(55,40)(52,40)(46,23)
\bezier{100}(46,23)(42,14)(35,10)
\put(0,7){$+$}
\put(20,15){$-$}
\put(41,19){$-$}
\put(51,33){$-$}
\put(22,25){$+$}
\put(0,19){$-$}
\put(45,6){$-$}
\put(8,18){\circle*{1.33}} 
\put(37,18){\circle*{1.33}} 
\put(43.7,18){\circle*{1.33}} 
\put(46.7,25){\circle*{1.33}}
\end{picture}
\qquad \qquad \qquad
\begin{picture}(60,40)
\bezier{250}(0,25)(22,0)(52,30)
\bezier{100}(52,30)(60,40)(55,39)
\bezier{350}(55,39)(38,30)(0,0)
\put(0,18){\line(1,0){60}}
\put(8,18){\circle*{1.33}} 
\put(23.7,18){\circle*{1.33}} 
\put(37,18){\circle*{1.33}} 
\put(18,13.7){\circle*{1.33}}
\put(39,25){$-$}
\put(14,15){$-$}
\put(25,15){$+$}
\put(0,9){$+$}
\put(0,19){$-$}
\put(12,23){$+$}
\put(40,7){$-$}
\end{picture}
}
\end{center}
\caption{Convenient perturbations for $E_7$}
\label{e7proof}
\end{figure}

\subsection{$E_7$} 
\label{e7reaa}
According to \cite{AC}, page 16, the singularity of this type has a perturbation $f_\lambda$ with  zero set as shown in Fig.~\ref{e7proof} (left). It follows directly from the normal forms of Table \ref{t1} and (\ref{e7vd}) that it has also perturbations as in Fig.~\ref{e7proof} (right). Marking by \ $+$ \ the leftmost intersection point  in Fig.~\ref{e7proof} (left) and additionally exactly  \ $0, 1, 2 $ or $3$ of other intersection points, we obtain the first  four pictures of Fig.~\ref{e71}. Marking the leftmost intersection point in Fig.~\ref{e7proof} (right) by \ $+$ \ and the other three intersection points by \ $-$ \ we obtain the fifth picture of Fig.~\ref{e71}. The involution of  deformation (\ref{e7vd}) sending any function $f_\lambda(x, y)$ to $-f_\lambda(-x, y)$ turns these five topological types  to five different ones.

\subsection{$E_8$} 
\label{e8reaa}
By \cite{AC}, page 17, there exist perturbations of singularities of type $E_8$, zero sets of which are topologically situated as in Fig.~\ref{e8proof} (left and right). Similarly to the previous subsection, we can obtain the topological types of the first five pictures of Fig.~\ref{e81} by certain markings of intersection points in Fig.~\ref{e8proof} (left).  Marking the rightmost intersection point in Fig.~\ref{e8proof} (right) by \ $+$ \ and other three ones by \ $-$ \ we obtain the sixth picture of Fig.~\ref{e81}. The involution \ $f_\lambda(x, y) \mapsto -f_\lambda(-x,-y)$ \ turns the middle four functions of this list to functions realizing the remaining topological types.

\unitlength 0.80mm
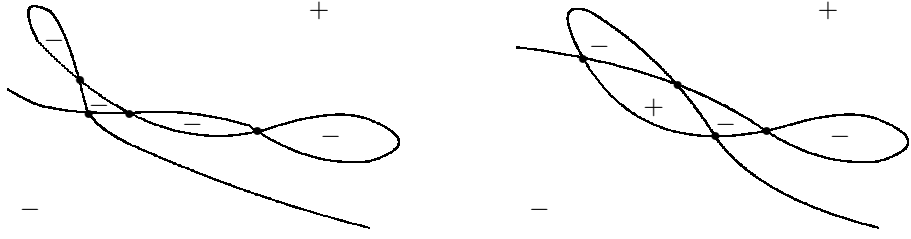
\begin{figure}
\begin{center}
{
\begin{picture}(70,36)
\bezier{70}(0,23)(5,20)(7,20)
\bezier{70}(7,20)(11,19)(20.2,19)
\bezier{100}(20.2,19)(30,20)(40,17)
\bezier{100}(40,17)(50,10)(60,11)
\bezier{70}(60,11)(70,14)(60,18)
\bezier{100}(60,18)(55,20)(45,17)
\bezier{150}(45,17)(25,10)(5,31)
\bezier{100}(5,31)(1,39)(7,36)
\bezier{100}(7,36)(10,33)(13,20)
\bezier{100}(13,20)(14,17)(20,14)
\bezier{150}(20,14)(40,5)(60,0)
\put(12,24.5){\circle*{1.5}}
\put(13.5,19){\circle*{1.5}}
\put(20.2,19){\circle*{1.5}}
\put(41.5,16){\circle*{1.5}}
\put(2,2){$-$}
\put(6,30){$-$}
\put(13.5,19.2){$-$}
\put(29,16){$-$}
\put(52,14){$-$}
\put(50,35){$+$}
\end{picture} \qquad \quad 
\begin{picture}(70,36)
\bezier{200}(0,30)(25,27)(40,17)
\bezier{100}(40,17)(50,10)(60,11)
\bezier{70}(60,11)(70,14)(60,18)
\bezier{100}(60,18)(55,20)(45,17)
\bezier{150}(45,17)(20,10)(10,30)
\bezier{100}(10,30)(6,38)(12,36)
\bezier{200}(12,36)(25,28)(33,15)
\bezier{150}(33,15)(40,5)(60,0)
\put(11,28){\circle*{1.5}}
\put(26.7,23.7){\circle*{1.5}}
\put(33,15.3){\circle*{1.5}}
\put(41.5,16){\circle*{1.5}}
\put(2,2){$-$}
\put(12,29){$-$}
\put(21,19){$+$}
\put(33,16){$-$}
\put(52,14){$-$}
\put(50,35){$+$}
\end{picture}
}
\end{center}
\caption{Perturbations for $E_8$}
\label{e8proof}
\end{figure}

\section{Homology calculations for $D_\mu$}
\label{esti}
\subsection{Simplicial resolution}

We study the cohomology groups of spaces $\R^\mu \setminus \Sigma$ by the method of \cite{V88}, see also \cite{Vbook}. First, following \cite{A70}, we use Alexander duality 
\begin{equation}
\label{alex}
\tilde H^j(\R^\mu\setminus \Sigma) \simeq \bar H_{\mu-1-j} (\Sigma)
\end{equation}
reducing them to the homology of discriminant sets; here $\tilde H^*$ means the cohomology group reduced modulo a point, and $\bar H_*$ the Borel--Moore homology. 

\begin{theorem}
\label{homdm}
Let $\Sigma$ be the discriminant variety of deformation $($\ref{dvd}$)$, then

1$)$ groups $\bar H_i(\Sigma, \Z_2)$ are trivial if $i \neq \mu-1$,

2$)$  $\bar H_{\mu-1}(\Sigma, \Z_2)$   is isomorphic to  $(\Z_2)^{\frac{(k+2)(k+1)}{2}}$ in case $D_{2k}^-$ and to 
$(\Z_2)^{\frac{(k+1)k}{2}-1}$ in cases $D_{2k}^+$ $(k \geq 2)$ and $\pm D_{2k-1}$ $(k \geq 3)$.
\end{theorem}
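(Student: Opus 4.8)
The plan is to compute $\bar H_*(\Sigma,\Z_2)$ by the simplicial (conical) resolution of \cite{V88}, exploiting that deformation $(\ref{dvd})$ depends affine-linearly on $\lambda$. For a point $X=(x,y)\in\R^2$, the three requirements that $X$ be a critical point of $f_\lambda$ with zero critical value are three affine-linear conditions on $\lambda$, namely the vanishing of the value functional, of $\partial_x f_\lambda(X)$, and of $\partial_y f_\lambda(X)$; their linear parts have leading terms in $\lambda_1,\lambda_2,\lambda_3$ respectively, hence are independent, so they cut out an affine subspace $L(X)\subset\R^\mu$ with $\dim L(X)=\mu-3$. Since $\Sigma=\bigcup_X L(X)$, I would build the conical resolution $\pi:\sigma\to\Sigma$ by embedding $\R^2$ generically into a large $\R^N$ and spanning, over each $\lambda$, the simplex on $\{X:\lambda\in L(X)\}$; then $\pi$ is proper and induces an isomorphism of Borel--Moore homology.

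First I would isolate the structural feature that drives everything: the $\partial_x$-condition is $\lambda_2+2xy=0$, whose $\lambda$-linear part is the single functional $\lambda_2$, independent of $X$. Consequently $\bigcap_{i=1}^{p}L(X_i)\neq\emptyset$ forces $x_1y_1=\cdots=x_py_p=:c$, so every contributing configuration lies on one level curve $\{xy=c\}$. On such a curve the remaining value- and $\partial_y$-conditions translate into Hermite interpolation constraints on the polynomial $P(y)=\lambda_1+\lambda_3y+\cdots+\lambda_\mu y^{\mu-2}$, prescribing $P(y_i)$ and $P'(y_i)$; for distinct $y_i$ and $2p\le\mu-1$ these $2p$ constraints are independent, whence $\dim\bigcap_i L(X_i)=\mu-1-2p$.

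Next I would run the filtration of $\sigma$ by the number $p$ of points, giving a spectral sequence converging to $\bar H_*(\Sigma,\Z_2)$ whose $p$-th column is $\bar H_*(\sigma_p\setminus\sigma_{p-1},\Z_2)$. The term $\sigma_p\setminus\sigma_{p-1}$ fibers over the configuration space of $p$-point subsets on a common level curve, parametrized by $(c,\{y_1,\dots,y_p\})$ and of dimension $p+1$, with fiber the product of an open $(p-1)$-simplex and the affine space $\bigcap_iL(X_i)$; its total dimension is exactly $(p+1)+(p-1)+(\mu-1-2p)=\mu-1$ for every admissible $p$. Over $\Z_2$ the simplex-orientation local system is trivial, so by the Thom isomorphism this column is the $\Z_2$-Borel--Moore homology of the configuration space shifted into degree $\mu-1$. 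The crucial point is that each branch of $\{xy=c\}$ is topologically a line, so ordering the points by a coordinate along each branch exhibits the configuration space as a disjoint union of contractible chambers; hence its $\Z_2$-homology is concentrated in top degree and equals $(\Z_2)^{N_p}$, where $N_p$ is the number of chambers. Thus every nonzero entry of the spectral sequence lies in total degree $\mu-1$, all differentials vanish, and we obtain $\bar H_i(\Sigma,\Z_2)=0$ for $i\neq\mu-1$ together with $\bar H_{\mu-1}(\Sigma,\Z_2)=\bigoplus_p(\Z_2)^{N_p}$. This proves part 1) and reduces part 2) to the count $\sum_p N_p$.

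Finally I would carry out the count. For each $p=1,\dots,\lfloor(\mu-1)/2\rfloor$ one enumerates the admissible chambers: the sign of $c$, the branch of $\{xy=c\}$ carrying each point, their linear order, and the realizability (sign) data recording which prescribed interpolation values are attainable with a real $P$ — the last depending on the normal form $x^2y\pm y^d$ and hence separating $D_{2k}^-$, $D_{2k}^+$, and $\pm D_{2k-1}$. Summing the resulting $N_p$ should telescope to $\frac{(k+2)(k+1)}{2}$ in the first case and to $\frac{(k+1)k}{2}-1$ in the other two. The main obstacle, and the step needing the most care, is the degenerate level $c=0$, where $\{xy=0\}$ splits into the two coordinate axes and points with $y_i=0$ impose the extra relations $\lambda_1=\lambda_2=0$: one must check that this sublocus (and any further non-generic coincidences) introduces no homology outside degree $\mu-1$, and that it accounts exactly for the surplus $k+2$ of the $D_{2k}^-$ answer over the other two — which is precisely where the difference in the number of asymptotic sectors manifests itself homologically.
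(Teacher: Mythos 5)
Your setup coincides with the paper's: the same simplicial resolution, the same filtration by the cardinality $p$ of the configuration, the observation that $\partial f_\lambda/\partial x=2xy+\lambda_2$ forces all critical points with zero value onto a single level curve $\{xy=c\}$, and the Hermite-interpolation count $\dim\bigcap_i L(X_i)=\mu-1-2p$ (valid for $p\leq k-1$). The gap is in the central homological claim. The base of your fibration is \emph{not} a disjoint union of contractible chambers: the chambers over $c>0$ and over $c<0$ are glued to one another along the locus $c=0$ (configurations on the cross $\{xy=0\}$), and over that locus your fibration structure itself breaks down, because the fiber dimension jumps — a pair of opposite points $(\pm x_0,0)$ on the horizontal axis imposes only the three conditions $\lambda_1=\lambda_2=0$, $\lambda_3=-x_0^2$ rather than four, and a point at the origin imposes $\lambda_1=\lambda_2=\lambda_3=0$. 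Consequently the column of the spectral sequence is not $(\Z_2)^{N_p}$ with $N_p$ the number of chambers: one must compute the Borel--Moore homology of the glued union, and this is exactly what the paper does via the cells $\Delta^{\pm}_{p,j}$ ($c\neq0$), $\Gamma_{p,j},\Lambda_{p,j},\Theta^{\pm}_{p,j}$ ($c=0$) and the boundary formulas of Proposition \ref{cor99}; these differentials cancel all but \emph{one} of the $2(p+1)$ chamber classes for each $p\leq k-1$, the surviving extra classes coming only from the isolated cells $\Omega_{p,j}$ (opposite-point pairs), a phenomenon your stratification does not identify. A concrete check: for $D_4^-$ and $p=1$ your chamber count already gives at least $4$ (sign of $c$ times choice of branch), whereas the correct column is $(\Z_2)^1$; summing free chambers over all $p$ overshoots $\frac{(k+1)(k+2)}{2}$ badly.

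A second, related gap is the range of $p$. Your independence argument requires $2p\leq\mu-1$, so your enumeration stops at $p=k-1$ (for $D_{2k}$), but the strata with $p=k$ and $p=k+1$ are precisely the ones that produce the bulk of the answer and the entire difference between the three cases. For $p=k$ with $c\neq 0$ the $2k$ Hermite conditions on the $2k-1$ coefficients are dependent, and the solvable locus is governed by the condition that the bracket in (\ref{diss}) be the square of a monic real polynomial of degree $k$ with distinct real roots (plus the Vieta constraint on the product of roots): this is possible for $D_{2k}^-$, giving $k+1$ ball components, and impossible for $D_{2k}^+$ and $D_{2k-1}$ because the leading coefficient has the wrong sign; the stratum $p=k+1$ exists only at $c=0$ via the $\Omega$-cells. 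These contribute $(k+1)+k$ classes in the $D_{2k}^-$ case — i.e., all of its excess over the other cases — so what you defer as "checking that $c=0$ introduces no homology outside degree $\mu-1$ and accounts for the surplus" is not a technical verification but is, together with the cancellation above, the actual content of the theorem. As written, the proposal reproduces the paper's framework but not its proof.
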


This theorem implies the completeness of lists of components of type $D_\mu$ given in Propositions \ref{d2kmmm}, \ref{d34} and \ref{d45}, and completes the proofs of these propositions and of parts of Theorems \ref{mthm} and \ref{mthm2} concerning singularities of types $D_\mu$.
Its statement 1) follows also from Theorem \ref{lotriv}.  \medskip

To prove Theorem \ref{homdm}, we construct a simplicial resolution of $\Sigma$. Let $T$ be the maximal number (over all $\lambda \in \Sigma$)  of different points $X_i \in \R^2$ such that $f_\lambda(X_i)=0$ and $df_\lambda(X_i)=0$. (It is easy to see that $T$ is equal to $k+1$ for $D_{2k}^-$ and to $k$ for $D_{2k}^+$  and $D_{2k-1}$.)
Consider a generic polynomial embedding $I: \R^2 \to \R^M$ into a space of very large dimension. The genericity of the embedding assumes that the convex hull of any $l \leq T$ different points $I(X_i)$, $X_i \in \R^2$, is a simplex of dimension $l-1$, and these simplices have only expected intersections (i.e. the intersection of any two simplices is their common face spanned by their common vertices).

Define  the $T$-th self-join  $(\R^2)^{*T} $ of $\R^2$ as the union of all these simplices in $\R^M$. 
It is easy to see that spaces $(\R^2)^{*T}$  defined in this way by different generic embeddings $I$ are canonically homeomorphic to one another.
The simplicial resolution $\sigma$ of the variety $\Sigma$ is a subset of the Cartesian product $(\R^2)^{*T} \times \R^\mu$. Namely, for any point $\lambda \in \Sigma$ we take all points $X_i \in \R^2$ such that $f_\lambda(X_i) =0$ and $d f_\lambda(X_i) =0$, then define the simplex $\Delta(\lambda) \subset (\R^2)^{*T}$ as the convex hull of all corresponding points $I(X_i)$, and consider the simplex $\Delta(\lambda) \times \lambda \subset (\R^2)^{*T} \times \R^\mu$.  Space $\sigma$ of our simplicial resolution is defined as the union of all these simplices $\Delta(\lambda) \times \lambda$ over all $\lambda \in \Sigma$.

\begin{proposition}[see e.g. \cite{Vbook}] \label{mprop}
The map $\sigma \to \Sigma $ defined by the obvious projection $(\R^2)^{*T} \times \R^\mu \to \R^\mu$ is proper and induces an isomorphism of Borel--Moore homology groups, $\bar H_*(\sigma) \stackrel{\sim}{\longrightarrow} \bar H_*(\Sigma)$. \hfill $\Box$
\end{proposition}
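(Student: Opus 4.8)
The plan is to derive the statement from the general principle that a proper continuous map all of whose fibers are acyclic induces an isomorphism of Borel--Moore homology. The point is that the fiber of the projection $\pi\colon \sigma \to \Sigma$ over a given $\lambda \in \Sigma$ is exactly the simplex $\Delta(\lambda)$, namely the convex hull of the finitely many vertices $I(X_i)$ associated with the singular points $X_i$ of $f_\lambda$ carrying zero critical value. Such a simplex is a nonempty compact convex set, hence contractible, so $\bar H_*(\pi^{-1}(\lambda)) \cong \bar H_*(\mathrm{pt})$ for every $\lambda \in \Sigma$. The whole argument therefore splits into checking properness of $\pi$ and then running a Leray-type collapse.

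First I would verify that $\pi$ is proper, i.e. that $\pi^{-1}(K)$ is compact for every compact $K \subset \Sigma$. The only mechanism by which compactness could fail is an escape of some singular point $X_i$ to infinity in $\R^2$ as $\lambda$ ranges over $K$. But for $\lambda$ in a bounded set the common principal part $f$ dominates $f_\lambda$ at infinity, so the solutions of the system $df_\lambda = 0$ (a fortiori those also satisfying $f_\lambda = 0$) remain inside a fixed ball of $\R^2$; concretely, for the forms in $(\ref{dvd})$ the equation $\partial_y f_\lambda = 0$ bounds $x^2 + (2k-1)y^{2k-2}$ by a $\lambda$-controlled quantity, forcing $x$ and $y$ to stay bounded. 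Hence the vertices $I(X_i)$ lie in a fixed compact part of $(\R^2)^{*T}$, and since each $\Delta(\lambda)$ is closed and the family is cut out by closed conditions, $\pi^{-1}(K)$ is compact.

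Next I would compare homology through the natural stratification of $\Sigma$ by the number and local type of the singular points contributing to $\Delta(\lambda)$. Over each stratum the map $\pi$ restricts to a locally trivial bundle with a fixed simplex as fiber, so the sheaf $\mathcal{H}_q$ on $\Sigma$ associated to the fiberwise Borel--Moore homology is the constant sheaf for $q=0$ and vanishes for $q>0$. Feeding this into the Leray spectral sequence of the proper map $\pi$ (equivalently, into the spectral sequence for compactly supported cohomology obtained by duality, using that a compact contractible simplex has $H^*_c = H^*(\mathrm{pt})$) makes it collapse onto the row $q=0$, which yields the desired isomorphism $\pi_* \colon \bar H_*(\sigma) \stackrel{\sim}{\longrightarrow} \bar H_*(\Sigma)$.

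The step I expect to be the main obstacle is not the formal collapse but the bookkeeping that guarantees its hypotheses: one must check that the stratification of $\Sigma$ is finite and that $\sigma$ restricts to a genuine simplicial bundle over each stratum, so that the higher direct image sheaves really vanish and $\mathcal{H}_0$ is honestly locally constant. This is precisely the content of the general theory of simplicial resolutions, and I would invoke \cite{Vbook} for this technical verification rather than reprove it. The genericity of the embedding $I$ assumed in the construction is exactly what makes it work: it ensures that distinct simplices $\Delta(\lambda)$ meet only along common faces spanned by shared vertices, so that the simplices assemble into a space $\sigma$ with the required local product structure over the strata.
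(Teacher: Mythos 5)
Your proposal is correct and follows essentially the same route as the paper, which gives no independent argument but cites \cite{Vbook}: the standard proof there is exactly properness plus compactness and contractibility of the fibers $\Delta(\lambda)$, yielding the Borel--Moore isomorphism by a Leray/proper-base-change collapse.

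One detail of your properness check needs repair: the parenthetical claim that for the deformations (\ref{dvd}) the single equation $\partial_y f_\lambda = 0$ bounds $x^2 + (2k-1)y^{2k-2}$ is true only for $D_{2k}^+$; for $D_{2k}^-$ one has $\partial_y f_\lambda = x^2 - (2k-1)y^{2k-2} + (\mbox{lower order in } y)$, whose zero set is unbounded. There you must use both equations: $\partial_x f_\lambda = 2xy + \lambda_2 = 0$ forces $|xy|$ to stay bounded, which is incompatible with $x^2 \sim (2k-1)y^{2k-2}$ as $|(x,y)| \to \infty$, so the full critical set is still bounded for $\lambda$ in a compact set. Also, the genericity of $I$ is not really what makes this proposition work (simplices $\Delta(\lambda)\times\lambda$ over distinct $\lambda$ are disjoint automatically, and convex hulls are contractible in any case); it is needed for the filtration statement of Proposition \ref{mprop2}, where $\sigma_p \setminus \sigma_{p-1}$ must fiber over $\Xi_p$ with open-simplex fibers.
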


This space $\sigma$ admits an increasing filtration: its subspace $\sigma_p \subset \sigma$ is defined as  the union of all simplices \ $\Delta \times \lambda \subset (\R^2)^{*T} \times \R^\mu$  \ spanned by no more than $p$ points $I(X_i) \times \lambda$ such that $f_\lambda(X_i)=0$, $df_\lambda(X_i)=0$. (Notice that such a simplex $\Delta$ is not necessarily a simplex $\Delta(\lambda)$ from the previous construction: it can be also a face of such a simplex.)
Let us calculate the spectral sequence $E_{p, q}^r$ converging to group $\bar H_*(\sigma,\Z_2) \simeq \bar H_*(\Sigma,\Z_2)$ and generated by our filtration $\{\sigma_p\}$. By definition its term $E_{p, q}^1$ is isomorphic to $\bar H_{p+ q}(\sigma_p \setminus \sigma_{p-1},\Z_2)$.

For any $p=1, \dots, T$ consider configuration space  $B(\R^2,p)$  of unordered subsets of cardinality $p$ in $\R^2$. Denote by $\Xi_p$ the subset in $B(\R^2,p) \times \R^\mu$ consisting of all pairs of the form (a $p$-configuration in $\R^2$, a point $\lambda \in \Sigma$) such that $f_\lambda (X)=0$ and $df_\lambda(X) =0$ for all points $X$ of this configuration. By the Vieta mapping  space $B(\R^2,p)$ can be identified with an open subset in $\R^{2p}$, therefore $\Xi_p$ can be considered as a semialgebraic subset in $\R^{2p+\mu}$.

\begin{proposition}[cf. \cite{Vbook}]
\label{mprop2}
For any $p=1, \dots, T,$ \  difference $\sigma_p \setminus \sigma_{p-1}$ is the space of a fiber bundle with base $\Xi_p$ and fibers equal to  open $(p-1)$-dimensional simplices; in particular $\bar H_N(\sigma_p \setminus \sigma_{p-1}, \Z_2) \simeq \bar H_{N-p+1}(\Xi_p, \Z_2)$ for any $N$. \hfill $\Box$
\end{proposition}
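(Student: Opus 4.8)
The plan is to realise the projection $\sigma_p \setminus \sigma_{p-1} \to \Xi_p$ explicitly, to check that it is a locally trivial bundle with the claimed fibre, and then to deduce the homology isomorphism from the Thom isomorphism in Borel--Moore homology.

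First I would unwind what a point of $\sigma_p \setminus \sigma_{p-1}$ is. By the genericity of the embedding $I$, every $z \in (\R^2)^{*T}$ lies in the relative interior of exactly one of the spanning simplices; call its vertex set the \emph{support} of $z$. A point $z \times \lambda$ lies in $\sigma_p$ exactly when its support consists of at most $p$ images $I(X_i)$ of points $X_i$ with $f_\lambda(X_i)=0$ and $df_\lambda(X_i)=0$, and it lies in $\sigma_p \setminus \sigma_{p-1}$ exactly when this support has precisely $p$ such vertices (a smaller support would place $z$ in a simplex with fewer vertices, hence in $\sigma_{p-1}$). Since the support vertices of $z$ are a subset of the vertices of the simplex containing it, they automatically inherit the singularity condition. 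Hence such a point is equivalent to the data of an unordered configuration $\{X_1, \dots, X_p\} \subset \R^2$ together with $\lambda$, that is, a point of $\Xi_p$, and of an interior point of the $(p-1)$-simplex on $I(X_1), \dots, I(X_p)$, recorded by barycentric coordinates $(t_1, \dots, t_p)$ with all $t_i > 0$ and $\sum_i t_i = 1$. This defines the projection $\pi : \sigma_p \setminus \sigma_{p-1} \to \Xi_p$, whose fibres are open $(p-1)$-dimensional simplices.

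Next I would establish local triviality. Over a sufficiently small neighbourhood $U$ of a point of $\Xi_p$, the $p$ distinct points of each configuration can be labelled continuously, producing continuous maps $X_1(\cdot), \dots, X_p(\cdot) : U \to \R^2$ and hence $I(X_1(\cdot)), \dots, I(X_p(\cdot)) : U \to \R^M$. By genericity these images remain affinely independent over $U$, so the barycentric coordinates depend continuously on the vertex system, and the map sending $\bigl((X_1, \dots, X_p, \lambda),(t_1, \dots, t_p)\bigr)$ to $\bigl(\sum_i t_i\, I(X_i)\bigr) \times \lambda$ is a homeomorphism from $U$ times the open standard simplex onto $\pi^{-1}(U)$. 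This trivialises $\pi$ over $U$ and proves the bundle claim. I expect this to be the main (though still routine) point: one must verify that the continuous labelling of the configuration and the affine independence of the vertices, both guaranteed by the generic choice of $I$, really do persist over a whole neighbourhood.

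Finally, the homology statement follows formally. The fibre, an open $(p-1)$-simplex, is homeomorphic to $\R^{p-1}$, so $\pi$ is a fibre bundle with fibre $\R^{p-1}$; the standard behaviour of Borel--Moore homology under such a bundle (the Thom isomorphism) then yields $\bar H_N(\sigma_p \setminus \sigma_{p-1}, \Z_2) \simeq \bar H_{N-p+1}(\Xi_p, \Z_2)$, the dimension shift being the fibre dimension $p-1$. Over $\Z_2$ no orientability assumption is needed, which is precisely why these coefficients are used here.
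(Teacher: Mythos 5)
Your proposal is correct and is precisely the standard argument behind this statement: the paper itself gives no proof (it is stated as a known fact, cf.\ \cite{Vbook}), and the identification of $\sigma_p\setminus\sigma_{p-1}$ with pairs (support configuration in $\Xi_p$, interior point of the spanned $(p-1)$-simplex), the local trivialization via continuous labelling of the $p$ distinct points, and the Borel--Moore Thom isomorphism with $\Z_2$ coefficients are exactly the intended reasoning. Nothing in your argument deviates from or adds a gap to this standard route.
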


\subsection{Groups $\bar H_*(\Xi_p,\Z_2)$ in the case $D_{2k}^-$}
Parameter  space $\R^{2k}$ of deformation  (\ref{dvd}) of class $D^-_{2k}$ consists of the hyperplane $\R^{2k-1}_0$ distinguished by the condition $ \lambda_2=0$ and two half-spaces $\R^{2k}_+$ and $\R^{2k}_-$ in which $\lambda_2>0$  (respectively, $\lambda_2<0$). 

\begin{proposition}
\label{hyperb}
For any $p=1, \dots, k,$ each of two spaces $\Xi_p \cap \left( B(\R^2,p) \times \R^{2k}_+\right)$ and $\Xi_p \cap \left(B(\R^2,p) \times \R^{2k}_-\right)$ consists of \ $p+1$ \ connected components homeomorphic to open $(2k-p)$-dimensional balls. For \ $p>k$ \ these spaces are empty.
\end{proposition}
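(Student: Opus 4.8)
The plan is to exploit $\lambda_2\neq 0$ to collapse the two-variable critical-point problem to a one-variable root problem. Write the deformed function as $f_\lambda(x,y)=x^2y+\lambda_2x+\psi(y)$, where $\psi(y)=-y^{2k-1}+\lambda_1+\lambda_3y+\lambda_4y^2+\dots+\lambda_{2k}y^{2k-2}$ collects the $x$-free terms. Since $\partial f_\lambda/\partial x=2xy+\lambda_2$, on either half-space $\R^{2k}_\pm$ every critical point of $f_\lambda$ lies on the hyperbola $H_\lambda=\{2xy+\lambda_2=0\}$, where $x=-\lambda_2/(2y)$ and $y\neq 0$. Restricting $f_\lambda$ to $H_\lambda$ (parametrized by $y$) gives the function $G(y)/y$ with $G(y):=y\,\psi(y)-\lambda_2^2/4$, a polynomial of degree $2k$, leading coefficient $-1$, and $G(0)=-\lambda_2^2/4<0$. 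The key step is the elementary computation that $y\mapsto(-\lambda_2/(2y),y)$ is a bijection between the real roots of $G$ of multiplicity $\ge 2$ (i.e. the $y$ with $G(y)=G'(y)=0$) and the critical points of $f_\lambda$ with zero critical value: along $H_\lambda$ one has $\tfrac{d}{dy}f_\lambda=\partial f_\lambda/\partial y$ because $\partial f_\lambda/\partial x=0$ there, so criticality of the restriction equals two-dimensional criticality, and $G(y)/y=0$ with $(G/y)'=0$ at $y\neq 0$ is equivalent to $G(y)=G'(y)=0$.

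With this dictionary, $\Xi_p\cap\big(B(\R^2,p)\times\R^{2k}_\pm\big)$ is identified with the space of pairs consisting of such a $G$ (equivalently of $(\lambda_1,\lambda_3,\dots,\lambda_{2k})$, with $\lambda_2$ of prescribed sign) together with a choice of $p$ of its multiple real roots; note that distinctness of the $p$ points in $B(\R^2,p)$ is equivalent to distinctness of the corresponding $y$-coordinates, since $\lambda_2\neq 0$ makes $y\mapsto(x,y)$ injective. Emptiness for $p>k$ is then immediate: $p$ distinct multiple roots force $\deg G\ge 2p>2k$, a contradiction.

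For $1\le p\le k$ I would pass to the normal form $G(y)=-\prod_{i=1}^{p}(y-y_i)^2\,Q(y)$, where $y_1<\dots<y_p$ are the chosen (pairwise distinct) marked roots and $Q$ is the monic quotient of degree $2k-2p$; the condition $G(0)=-\lambda_2^2/4$ becomes $\prod y_i^2\cdot Q(0)=\lambda_2^2/4$, so $Q(0)>0$. Because $G(0)\neq 0$, no marked root can reach $0$, and no two can collide inside the configuration space, so the integer $j=\#\{i:y_i<0\}\in\{0,1,\dots,p\}$ is locally constant; this separates the space into $p+1$ relatively clopen pieces. Each nonempty piece is a cell: fixing the sign of $\lambda_2$ and recovering its modulus from $\prod y_i^2\,Q(0)=\lambda_2^2/4$, the assignment $(y_\bullet,Q)\mapsto(G,\text{marking})$ is a homeomorphism from the product of the region $\{y_1<\dots<y_j<0<y_{j+1}<\dots<y_p\}\cong\R^{p}$ with the space of monic real polynomials of degree $2k-2p$ having positive constant term, which is $\cong\R^{2k-2p}$. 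Hence each piece is homeomorphic to $\R^{2k-p}$, i.e. an open $(2k-p)$-ball, all $p+1$ values of $j$ are realized, and these are exactly the connected components; the argument for $\R^{2k}_-$ is verbatim.

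The main obstacle to nail down is that these bijections are genuine homeomorphisms of semialgebraic sets, not just set maps. The forward maps are polynomial, hence continuous; the inverse recovers $Q$ by dividing $G$ by $\prod(y-y_i)^2$, which is continuous precisely because the marked roots remain pairwise distinct on the configuration space, and recovers $|\lambda_2|$ by a square root, continuous on a fixed sign sector. One must also allow the degenerate locus $Q(y_i)=0$ (a marked root of multiplicity $>2$), which is automatically present in both $\Xi_p$ and the domain, so surjectivity is preserved, and handle the endpoint $p=k$, where $Q\equiv 1$, the factor $\R^{2k-2p}$ degenerates to a point, and the piece is $\cong\R^{k}$, exactly as required.
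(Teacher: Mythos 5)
Your proposal is correct: the dictionary between discriminant critical points on the hyperbola $\{2xy+\lambda_2=0\}$ and multiple roots of $G(y)=y\psi(y)-\lambda_2^2/4$ is exactly right (note $G$ is, up to sign and normalization, the numerator of the discriminant of $f_\lambda$ viewed as a quadratic in $x$, which is the polynomial in formula (\ref{diss}) of the paper), the factorization $G=-\prod_{i=1}^p(y-y_i)^2Q(y)$ with $Q$ monic, $Q(0)>0$, gives a genuine parametrization (polynomial division by a monic divisor is continuous, and exactness holds because marked roots have multiplicity $\geq 2$), and the counting of components by $j=\#\{i: y_i<0\}$ and the degree bound for $p>k$ are sound. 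The paper shares your starting point but splits the argument in two: for $p<k$ it works directly on the hyperbola, quotes the interpolation theorem to show that the conditions $f_\lambda=\partial f_\lambda/\partial y=0$ at $p$ prescribed hyperbola points cut out an affine subspace of codimension $2p$ in the remaining coefficients, and uses the quasi-homogeneous dilation action to fiber everything over $\lambda_2\in(-\infty,0)$; only for the extreme case $p=k$ (where interpolation would be overdetermined) does it pass to the discriminant polynomial and demand that it be a perfect square with distinct real roots. Your factorization argument subsumes both cases uniformly -- the paper's perfect-square condition is precisely your degenerate case $Q\equiv 1$ -- and it buys explicit cell coordinates in which the ball structure and the clopen decomposition by $j$ are manifest, at no extra cost; what the paper's route buys is softness: the interpolation-plus-dilation scheme is the standard template in this simplicial-resolution method and transfers with fewer changes to the other strata ($\Gamma$, $\Lambda$, $\Theta$, $\Omega$) and to the $D_{2k}^+$ and $D_{2k-1}$ cases treated afterwards.
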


\begin{proof} Consider only spaces $\Xi_p \cap \left( B(\R^2,p) \times \R^{2k}_-\right)$, as  spaces $\Xi_p \cap \left(B(\R^2,p) \times \R^{2k}_+\right)$ are symmetric to them. The map sending any point  \ (a $p$-configuration $ \rho \in B(\R^2,p); \lambda \in \Sigma)$ \ of such a space to coordinate $\lambda_2$ of $\lambda$ makes this space a  fiber bundle over the interval $(-\infty,0)$.
Indeed, the multiplicative group of positive numbers acts on it by quasi-homogeneous dilations $$t: (x, y) \to (t^{k-1}x, ty), \ 
t: f_\lambda(x, y) \to t^{-(2k-1)}f_\lambda(t^{k-1}x, ty), $$ and identifies all fibers of this map with one another.
 Therefore it is enough to consider only one its fiber consisting of such pairs with $\lambda_2=-1$. Let $(\rho,\lambda)$ be a point of this fiber. By the condition $\partial f_\lambda/\partial x =0$ all points $X_i$ of the $p$-configuration $\rho$ lie in the hyperbola $\{2xy=1\}$. If $p < k$ then by the interpolation theorem for any collection of $p$ different points in this hyperbola we can choose the remaining coefficients $\lambda_1, \lambda_3, \dots, \lambda_{2k}$ of polynomial (\ref{dvd}) in such a way that it will satisfy the conditions $f_\lambda =0, \partial f_\lambda/\partial y =0$ at all these \ $p$ \ points. Moreover, the set of points $\lambda$ satisfying these conditions is an affine subspace of codimension $2p$ in the space $\R^{2k-1}$ of these remaining coefficients.
Therefore space $\Xi_p \cap \left( B(\R^2,p) \times \R^{2k}_-\right)$   is homeomorphic to the product of spaces $(-\infty,0),$ $\R^{2k-1-2p}$, and the configuration space of $p$-point subsets of a hyperbola.

Now let \ $p=k$. Consider function (\ref{dvd}) as a polynomial of degree $2$ in $x$; its discriminant  is equal to
\begin{equation} \label{diss}
-\frac{1}{y^2}\left(\frac{1}{4} -\lambda_1y - \lambda_3y^2 - \lambda_4y^3 - \dots - \lambda_{2k} y^{2k-1} + y^{2k}\right) . 
\end{equation} If  $(x, y)$ is a critical point  of $f_\lambda$ with critical value $0$  then $y$ is a double root  of function  (\ref{diss}). Therefore if $f_\lambda$ has $k$ discriminant critical points then the polynomial in brackets should be the square of a monic polynomial of degree $k$, all roots of which are real and different. By Vieta formula the product of these roots should be equal to $1/2$ or $-1/2$. It is easy to see that conversely, if the polynomial in (\ref{diss}) is the square of such a polynomial, then $f_\lambda$ has $k$ discriminant points. The space of all polynomials satisfying these conditions consists again of $p+1 \equiv k+1$ components homeomorphic to open balls (of dimension $k-1$).

Function (\ref{diss}) cannot have more than $k$ multiple zeros, therefore functions $f_\lambda$ with $\lambda_2 < 0$ cannot have more than $k$ critical points with critical value 0. \end{proof}

\begin{definition} \rm
For any integers $p$ and $j$ such that $1\leq p \leq k$, $0 \leq j \leq p$, denote by $\Delta_{p, j}^+$ (respectively, $\Delta_{p, j}^-$) the component of the set $\Xi_p \cap \left(B(\R^2,p) \times \R^{2k}_+\right)$ (respectively, $\Xi_p \cap \left(B(\R^2,p) \times \R^{2k}_-\right)$) consisting of points $(\rho,\lambda)$ of this set such that the configuration $\rho \in B(\R^2,p)$ has exactly $j$ points in the left-hand part $\{y<0\}$ of $\R^2$.
\end{definition}

Consider now spaces $\Xi_p \cap \left( B(\R^2,p) \times \R^{2k-1}_0\right)$
of points $(\rho,\lambda) \in \Xi_p$ such that coordinate $\lambda_2$ of $\lambda$ is equal to $0$. The condition $\partial f_\lambda/\partial x =0$ for such $\lambda$ implies that all points of configuration $\rho$ lie in the cross $\{xy=0\}$. The set of critical points of $f_\lambda$ with critical value 0 is symmetric with respect to the axis $\{x=0\}$. The set of such points lying in the axis $\{y=0\}$ can consist either of one point (then it is the origin, and $\lambda_1=\lambda_3=0$) or of exactly two symmetric points (this happens if and only if $\lambda_1=0$ and $ \lambda_3<0$). In the latter case $x$-coordinates of these critical points are equal to $\pm \sqrt{-\lambda_3}$.

\begin{proposition}
\label{prd2kmb}
For any $p=1, \dots, k-1,$ the space $\Xi_p \cap \left(B(\R^2,p) \times \R^{2k-1}_0\right)$ is the union of the following strata:

a$)$ $p+1$ spaces \ $\Gamma_{p, j}$, \ $j=0, \dots, p,$ diffeomorphic to $\R^{2k-p-1}$ and consisting of pairs \ $(\rho, \lambda) \in \Xi_p$ such that all $p$ points of the configuration $\rho$ lie in the line $\{x=0\}$, $j$ of them in the negative part $\{y<0\}$ of this line and other \ $p-j$ \ in its positive part;

b$)$ $p$ spaces $\Lambda_{p, j},$ $j=0, \dots, p-1$,  diffeomorphic to $\R^{2k-p-2}$ and consisting of pairs $(\rho, \lambda) \in \Xi_p$ such that all points of the configuration $\rho$ lie in the line $\{x=0\}$, $j$ of them in the negative part, $p-j-1$ in the positive part, and one is equal to $0$;

c$)$ $2p$ spaces $\Theta_{p, j}^{\pm}$, $j=0, \dots, p-1$, diffeomorphic to $\R^{2k-p-1}$ and consisting of pairs $(\rho,\lambda) \in \Xi_p$ such that \ $p-1$ \ points of configuration $\rho$ are non-zero points of the line $
\{x=0\}$, exactly $j$ of them lie in the negative part of this line, while the $p$-th point of $\rho$ is a non-zero point of the line $\{y=0\}$, and specifically of its half indicated by the upper index \ $+$ \ or \ $-$ \ of the notation  $\Theta_{p, j}^{\pm}$;

d$)$ $p-1$ spaces $\Omega_{p, j}$, $j=0, \dots, p-2$, diffeomorphic to $\R^{2k-p}$ and consisting of pairs \ $(\rho,\lambda) \in \Xi_p$ such that  the configuration $\rho$ contains two opposite points of the line $\{y=0\}$ and \ $p-2$ \ non-zero points of the line $\{y=0\}$, exactly $j$ of them in the negative part of this line. 

If $p=k$ then $\Xi_p \cap \left(B(\R^2,p) \times \R^{2k-1}_0\right)$ consists of only $2k$ spaces of type c$)$ and $k-1$ spaces of type d$)$; if $p=k+1$ then it consists of only $k$ spaces of type d$)$.
\end{proposition}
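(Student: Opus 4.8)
The plan is to work entirely on the hyperplane $\R^{2k-1}_0=\{\lambda_2=0\}$ and to exploit that for such $\lambda$ the polynomial (\ref{dvd}) of type $D^-_{2k}$ satisfies $\partial f_\lambda/\partial x=2xy$, so that every zero critical point of $f_\lambda$ lies on the cross $\{xy=0\}$. First I would restrict $f_\lambda$ to the two lines of this cross. On the vertical line $\{x=0\}$ the restriction is the one-variable polynomial
\[
g(y) := f_\lambda(0,y) = -y^{2k-1} + \lambda_{2k}y^{2k-2} + \dots + \lambda_4 y^2 + \lambda_3 y + \lambda_1
\]
of degree $2k-1$ with leading coefficient $-1$, whose free coefficients $\lambda_1,\lambda_3,\dots,\lambda_{2k}$ are exactly the coordinates of $\R^{2k-1}_0$; since $\partial f_\lambda/\partial y(0,y)=g'(y)$, a point $(0,y_0)$ is a zero critical point precisely when $y_0$ is a multiple root of $g$. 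On the horizontal line one has $f_\lambda(x,0)\equiv\lambda_1$ and $\partial f_\lambda/\partial y(x,0)=x^2+\lambda_3$; hence there are zero critical points off the origin only when $\lambda_1=0$ and $\lambda_3<0$, in which case they form the single symmetric pair $(\pm\sqrt{-\lambda_3},0)$, while the origin is such a point exactly when $\lambda_1=\lambda_3=0$.

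Next I would classify a configuration $\rho$ by the distribution of its points over the cross. The horizontal axis contributes at most the two points of the symmetric pair, and the origin can belong to $\rho$ only when $\lambda_1=\lambda_3=0$, which is incompatible with the presence of any off-origin horizontal point (forcing $\lambda_3<0$). Thus exactly four mutually exclusive and exhaustive possibilities occur, according to whether $\rho$ meets the horizontal axis in $0$, $1$, or $2$ off-origin points and, in the first case, whether $\rho$ contains the origin; these are precisely the families a), b), c), d), with $j$ recording the number of points of $\rho$ on the negative part of the vertical line. Verifying that this list is complete and the families disjoint --- which rests entirely on the interaction of the conditions at the origin and on the horizontal axis above --- is the step I expect to be the main obstacle.

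It then remains to present each family as a cell of the asserted dimension, for which the device is the unique factorization of $g$ (equivalently the interpolation argument used in the proof of Proposition \ref{hyperb}). For a), $g$ is forced to factor as $-\prod_{i=1}^{p}(y-y_i)^2\,h(y)$ with $h$ monic of degree $2k-1-2p$, and $(\rho,\lambda)\mapsto(\{y_i\},h)$ is a diffeomorphism onto the product of the space of $p$ distinct punctured-line points with $j$ of them negative (an open cell $\cong\R^{p}$, obtained by ordering the negative and the positive points separately) and the space of such $h$ ($\cong\R^{2k-1-2p}$), giving $\Gamma_{p,j}\cong\R^{2k-p-1}$. For b) one of the $p$ double roots is pinned at the origin (forced by $\lambda_1=\lambda_3=0$), so only $p-1$ points remain free and the dimension drops by one. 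For c) and d) the factor at the origin is instead a single $y$ (a simple root forced by $\lambda_1=0$, since then $g'(0)=\lambda_3\neq0$); here the horizontal point(s) are not free but determined by $\lambda_3=g'(0)=-\prod y_i^2\,h(0)$, so the requirement $\lambda_3<0$ becomes the open condition $h(0)>0$, cutting out an open half-space still diffeomorphic to a Euclidean space, while for c) the sign of $\sqrt{-\lambda_3}$ splits the stratum into $\Theta^{+}_{p,j}$ and $\Theta^{-}_{p,j}$. Counting the admissible $j$ (and the sign) yields the numbers $p+1$, $p$, $2p$, $p-1$.

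Finally I would read off the degenerate cases from $\deg g=2k-1$. Families a) and b) demand total root multiplicity $2p$ of $g$, impossible once $2p>2k-1$, i.e.\ already for $p=k$; families c) and d) demand multiplicity $2p-1$ and $2p-3$, so for $p=k$ only c) and d) survive (with $h$ of degree $0$ and $2$), for $p=k+1$ only d) survives (with $h$ of degree $0$), and no configuration exists for larger $p$, consistently with $T=k+1$. Combined with the diffeomorphisms above, this completes the proof.
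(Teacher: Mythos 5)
Your proof is correct and takes essentially the same route the paper intends: the paper's own proof is only declared ``elementary,'' but the setup paragraph preceding the proposition (critical points confined to the cross $\{xy=0\}$, and the dichotomy $\lambda_1=\lambda_3=0$ versus $\lambda_1=0,\ \lambda_3<0$ on the horizontal axis) together with the factorization/interpolation device from the proof of Proposition \ref{hyperb} are exactly the ingredients you use. Your explicit factorizations of $g$ correctly yield the cell structures, dimensions, stratum counts, and the degenerate cases $p=k$ and $p=k+1$, so nothing is missing.
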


\noindent
{\it Proof} is elementary. \hfill $\Box$

\begin{proposition}
The closures of sets $\Delta_{p, j}^{\pm}$ in $B(\R^2,p) \times \R^{2k}$ do not contain any points of  sets $\Omega_{p,i}$.
\end{proposition}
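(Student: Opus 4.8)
The plan is to reduce the whole statement to a fact about real double roots of a single auxiliary polynomial in $y$, and then to derive a contradiction from the continuity of roots under perturbation of coefficients.

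First I would eliminate $x$. Writing deformation (\ref{dvd}) as a quadratic polynomial in $x$,
\[
f_\lambda = y\,x^2 + \lambda_2 x + P(y), \qquad P(y) = \lambda_1 + \lambda_3 y + \lambda_4 y^2 + \cdots + \lambda_{2k}y^{2k-2} - y^{2k-1},
\]
I observe that on the curve $\{\partial f_\lambda/\partial x = 2xy+\lambda_2 = 0\}$ one has $x = -\lambda_2/(2y)$ for $y\neq 0$, and that the restriction $g(y):=f_\lambda(-\lambda_2/(2y),y) = P(y)-\lambda_2^2/(4y)$ satisfies $g'(y) = (\partial f_\lambda/\partial y)(-\lambda_2/(2y),y)$ by the chain rule, since $\partial f_\lambda/\partial x$ vanishes along this curve. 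Hence, for $\lambda_2\neq 0$ (when every critical point automatically has $y\neq 0$), the critical points of $f_\lambda$ of critical value $0$ correspond bijectively to the real double roots of
\[
G(y) := 4y\,g(y) = 4yP(y) - \lambda_2^2,
\]
a polynomial of degree $2k$ with leading coefficient $-4$ \emph{independent} of $\lambda$; distinct critical points have distinct $y$-coordinates, hence distinct double roots.

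Next I would translate the two families. A point of $\Delta^{\pm}_{p,j}$ has $\lambda_2\neq 0$ and carries $p$ distinct critical points of value $0$, so $G$ has $p$ distinct nonzero real double roots. Suppose, for contradiction, that a sequence $(\rho_n,\lambda_n)\in\Delta^{-}_{p,j}$ converges in $B(\R^2,p)\times\R^{2k}$ to a point $(\rho,\lambda)\in\Omega_{p,i}$. The two opposite points $(\pm a,0)$, $a\neq 0$, of the limit configuration $\rho$ are critical points of value $0$ lying on $\{y=0\}$ with $\lambda_2=0$; evaluating $f_\lambda$ and $\partial f_\lambda/\partial y$ there forces $\lambda_1=0$ and $\lambda_3=-a^2<0$. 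Because convergence in $B(\R^2,p)$ keeps all $p$ points distinct, exactly two points of $\rho_n$ approach $(\pm a,0)$; these lie on the hyperbola $2xy=-\lambda_{2,n}$, so their $y$-coordinates are two \emph{distinct nonzero} double roots of $G_{\lambda_n}$ that both tend to $0$.

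Finally comes the contradiction. As $\lambda_n\to\lambda$ the coefficients of $G_{\lambda_n}$ converge to those of $G_\lambda = 4yP(y) = 4y^2(\lambda_3+\lambda_4 y+\cdots)$, and here $y=0$ is a root of multiplicity exactly $2$, since $\lambda_1=0$ while $\lambda_3=-a^2\neq 0$. As the degree and leading coefficient of $G$ never degenerate, continuity of roots guarantees that a sufficiently small disc about $y=0$ contains exactly $2$ roots (with multiplicity) of $G_{\lambda_n}$ for all large $n$; but the two distinct double roots found above contribute at least $4$ roots there, a contradiction. This excludes the limit, and since $G$ depends on $\lambda_2$ only through $\lambda_2^2$ the identical computation disposes of $\Delta^{+}_{p,j}$. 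I expect the delicate point to be the correspondence ``critical points of value $0$ $\leftrightarrow$ double roots of $G$'' and, above all, its breakdown precisely at $\lambda_2=0$: it is this degeneration (two double roots of $G$ colliding into a single double root of the limit polynomial at $y=0$) that furnishes the whole obstruction.
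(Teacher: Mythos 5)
Your proof is correct, and it takes a genuinely different route from the paper's. You eliminate $x$ by restricting to the curve $\{\partial f_\lambda/\partial x = 0\}$ and encode the zero-valued critical points as real roots of multiplicity $\geq 2$ of $G_\lambda(y) = 4yP(y) - \lambda_2^2$, which is (up to sign) the discriminant of $f_\lambda$ viewed as a quadratic in $x$ --- the same quantity the paper itself uses in formula (\ref{diss}) when proving Proposition \ref{hyperb}; the contradiction then comes from continuity of roots in a family of polynomials of fixed degree $2k$ and fixed leading coefficient: along your sequence, two distinct real roots of multiplicity $\geq 2$ collapse onto $y=0$, whereas the limit polynomial $4y^2(\lambda_3 + \lambda_4 y + \cdots)$ has a root of multiplicity exactly $2$ there (because $\lambda_3 = -a^2 \neq 0$), so a small disc about $0$ would have to contain both at least $4$ and exactly $2$ roots. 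The paper argues topologically instead: at a point of $\Omega_{p,i}$ the line $\{y=0\}$ lies in $f_\lambda^{-1}(0)$ (as $\lambda_1 = \lambda_2 = 0$), the two critical points on it are Morse saddles, and the local zero set is this line plus two transversal branches; structural stability of Morse functions forces any nearby function with two nearby zero-valued critical points to reproduce this local picture, so the line $\{y = \mathrm{const}\}$ through one of its critical points meets its zero set in at least $3$ points, and quadraticity in $x$ then forces that whole line into the zero set, giving $\lambda_2 = 0$, which is incompatible with membership in $\Delta_{p,j}^{\pm}$. Both proofs hinge on the same key fact --- deformation (\ref{dvd}) has degree $\leq 2$ in $x$ --- but yours is more elementary and self-contained (nothing beyond Rouch\'e-type continuity of roots) and meshes with the discriminant computation already present in the paper, while the paper's is shorter and more geometric, at the price of invoking structural stability and a description of the local zero set.
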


\noindent
\begin{proof} Let $f_\lambda$ be a polynomial of the form (\ref{dvd}) participating in a stratum $\Omega_{p,i}$, i.e. having two critical points with critical value 0 on the line $\{y=0\}$. These two points are then Morse saddlepoints, and there are no other critical points of $f_\lambda$ in a neighbourhood of this line. Set $f_\lambda^{-1}(0)$ consists in this neighbourhood of this vertical line and two other smooth components passing through these saddlepoints and transversal to the vertical line. Suppose that $f_{\tilde \lambda}$ is another function of the form (\ref{dvd}) with $\tilde \lambda$ very close to $\lambda$, also having two critical points with value $0$, which are close to these saddlepoints of $f_\lambda$. Structural stability of Morse functions implies easily that the topological picture of the zero level set of $f_{\tilde \lambda}$  is the same as that of $f_\lambda$ in our neighbourhood of the line $\{y=0\}$. In particular, the total intersection number of set $f^{-1}_{\tilde \lambda}(0)$ and the vertical line passing through one of these two critical points of $f_{\tilde \lambda}$ is at least 3. Since all functions (\ref{dvd}) are quadratic on $x$, this implies that  this vertical line should belong to this zero set; but then $f_{\tilde \lambda}$ cannot be a function participating in a  stratum of the class $\Delta^{\pm}_{p, j}$. \end{proof}

So, the space $\Xi_p$ consists of $p-1$ isolated cells $\Omega_{p, j}$ and the union of remaining spaces $\Delta_{p, j}^{\pm}$, $\Gamma_{p, j}$, $\Lambda_{p, j}$, and $\Theta_{p, j}^{\pm}$. Denote the latter union by $\tilde \Xi_p$. To calculate its Borel--Moore homology group, let us filter it by the unions of cells of different dimensions. The lowest term $F_0$ of this filtration consists of cells $\Lambda_{p, j}$, the next term $F_1$ contains additionally cells $\Gamma_{p, j}$ and $\Theta_{p, j}^{\pm}$, and $F_2$ contains additionally cells $\Delta_{p, j}^{\pm}$. If $p < k$ then term ${\mathcal E}^1$ of the spectral sequence defined by this filtration and calculating the group $\bar H_*(\tilde \Xi_p, \Z_2)$ consists of three non-trivial groups ${\mathcal E}^1_{a,2k-p-2}$, $a=0,1,2,$ combined by the differential $\partial^1$ of the spectral sequence into a complex of the form 
\begin{equation}
\label{rss}
(\Z_2)^p \leftarrow (\Z_2)^{3p+1} \leftarrow (\Z_2)^{2p+2} \ .
\end{equation}
If $p=k$ then $F_0$ is empty, and the similar complex looks as \ $0 \leftarrow (\Z_2)^{2k} \leftarrow (\Z_2)^{2k+2}$. All these groups are generated by classes of our cells. 

\begin{proposition}
\label{cor99}
The differentials of complex $($\ref{rss}$)$ for any $p \leq k-1$ are as follows:
$$\begin{array}{rclcrcl}
\partial \Delta_{p, j}^- & = & \Gamma_{p, j} + \Theta_{p, j}^+  + \Theta_{p,j-1}^- & \qquad &
\partial \Delta_{p, j}^+ & =&  \Gamma_{p, j} + \Theta_{p,j-1}^+  + \Theta_{p, j}^- \\
\partial \Theta_{p, j}^- & = &\Lambda_{p, j} 
 & \qquad &
\partial \Theta_{p, j}^+ & = & \Lambda_{p, j}  \\
\partial \Gamma_{p, j} & = & \Lambda_{p, j} + \Lambda_{p,j-1} 
& \qquad &
\partial \Lambda_{p, j} & = & 0 \ .
\end{array}
$$
For $p=k$ the differentials of the corresponding two-term complex are as follows:

\noindent
$
\partial \Delta_{k, j}^-  = \Theta_{k, j}^+  + \Theta_{k, j-1}^- \ , \quad 
\partial \Delta_{k, j}^+  =  \Theta_{k, j-1}^+  + \Theta_{k, j}^- \ .
$

In all these formulas we mean that $\Theta^{\pm}_{p,-1} = 0$, $\Theta^{\pm}_{p,p} =0$, $\Lambda_{p,-1}=0$, and $\Lambda_{p,p}=0$.
\end{proposition}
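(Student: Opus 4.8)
The plan is to identify the differentials of complex (\ref{rss}) with the mod $2$ incidence coefficients of the cell-like decomposition of $\tilde\Xi_p$ furnished by Propositions \ref{hyperb} and \ref{prd2kmb}, and to evaluate these coefficients from the geometry of how the configurations lying on the hyperbola $\{2xy=-\lambda_2\}$ degenerate onto the cross $\{xy=0\}$ as $\lambda_2\to 0$. Recall the grading: the cells $\Lambda_{p,j}$ have dimension $2k-p-2$, the cells $\Gamma_{p,j}$ and $\Theta^{\pm}_{p,j}$ have dimension $2k-p-1$, and the cells $\Delta^{\pm}_{p,j}$ have dimension $2k-p$; since the Borel--Moore homology of a disjoint union of open cells $\cong\R^d$ is free over $\Z_2$ and concentrated in degree $d$, the group $\mathcal E^1_{a,2k-p-2}$ is freely generated by the cells of filtration level $a$, and $\partial^1$ is the connecting homomorphism of the filtration. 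Over $\Z_2$ this connecting map is local: the coefficient of a cell $C'$ in $\partial C$ is the parity of the number of local sheets of the larger cell $C$ whose closure reaches a generic point of $C'$. Thus the whole statement reduces to a branch count.

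Next I would carry out that count. A point of $\Delta^{-}_{p,j}$ is a configuration of $p$ points on the two branches of $\{2xy=-\lambda_2\}$, $\lambda_2<0$, namely $p-j$ points in the quadrant $\{x>0,y>0\}$ and $j$ points in $\{x<0,y<0\}$; as $\lambda_2\to 0$ the hyperbola straightens to the two axes and each point limits onto one of them. Three elementary degenerations occur: a point whose $x$-coordinate tends to $0$ limits onto the line $\{x=0\}$, staying in its half-plane $\{y>0\}$ or $\{y<0\}$; a single point whose $y$-coordinate tends to $0$ limits onto the half of $\{y=0\}$ determined by its quadrant; and a point reaching the origin produces a stratum of type $\Lambda$. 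Reading off the codimension-one frontier of $\Delta^{-}_{p,j}$: all $p$ points straightening onto $\{x=0\}$ gives $\Gamma_{p,j}$; one upper-branch point dropping onto the positive half of $\{y=0\}$ gives $\Theta^{+}_{p,j}$; one lower-branch point dropping onto the negative half gives $\Theta^{-}_{p,j-1}$, the index shifting because one of the $j$ negative points has left the count on $\{x=0\}$. This yields $\partial\Delta^{-}_{p,j}=\Gamma_{p,j}+\Theta^{+}_{p,j}+\Theta^{-}_{p,j-1}$, and the mirror analysis for $\lambda_2>0$ gives the companion formula. At the next level the same local picture gives $\partial\Theta^{\pm}_{p,j}=\Lambda_{p,j}$, as the single point on $\{y=0\}$ slides into the origin, and $\partial\Gamma_{p,j}=\Lambda_{p,j}+\Lambda_{p,j-1}$, according as a positive or a negative point on $\{x=0\}$ reaches the origin; the cells $\Lambda_{p,j}$ lie in the bottom term $F_0$ and so carry zero differential. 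For $p=k$ the list of Proposition \ref{prd2kmb} contains no strata of type $\Gamma$ or $\Lambda$, so the all-points-to-$\{x=0\}$ degeneration is absent and the same analysis leaves only the two $\Theta$-terms, which is the stated two-term formula.

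The main obstacle I expect lies in the two genericity claims behind the branch count: that near a generic point of each lower cell the closure of a given larger cell is approached along exactly one local sheet, so that every incidence coefficient is $0$ or $1$ rather than a higher integer, and that no frontier strata beyond those listed arise. For $p<k$ this should follow from the product descriptions in Propositions \ref{hyperb} and \ref{prd2kmb}, which present each cell as a configuration space of points on a smooth curve times an affine fibre, so the relevant degenerations are transverse multiplicity-one collisions of a single point with the axis-crossing locus or with the origin; the use of $\Z_2$ coefficients then removes all orientation and sign questions. The delicate case is again $p=k$, where one must check that the constraint forcing the polynomial in (\ref{diss}) to be a perfect square (equivalently a product of roots equal to $\pm\tfrac12$) survives the limit and rules out precisely the $\Gamma$- and $\Lambda$-type degenerations; here I would invoke the stratum list already established in Proposition \ref{prd2kmb} rather than re-prove the existence of the limiting configurations.
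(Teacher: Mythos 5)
Your proof is correct and is essentially the paper's own argument written out in full: the paper's proof of Proposition \ref{cor99} is the single assertion that the boundary coefficients follow elementarily from the definition of the cells, and your identification of $\partial^1$ with mod~2 incidence numbers, computed by tracking how configurations on the hyperbola $\{2xy=-\lambda_2\}$ degenerate onto the cross $\{xy=0\}$ as $\lambda_2\to 0$, is precisely that elementary verification carried out explicitly. The quadrant bookkeeping (which forces the index shifts $\Theta^{-}_{p,j-1}$, $\Theta^{+}_{p,j-1}$), the single-sheet counts, and the treatment of $p=k$ by appeal to the stratum list of Proposition \ref{prd2kmb} are all sound.
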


\noindent
{\it Proof.} All these boundary coefficients follow elementary from the definition of our cells. \hfill $\Box$

\begin{corollary}
For any $p =1, \dots, k-1$,  group $\bar H_*(\tilde \Xi_p, \Z_2)$ is isomorphic to $\Z_2$ in dimension $2k-p$ and is trivial in all other dimensions; this group is generated by the class of the union of all strata $\Delta_{p, j}^{\pm}$ over all indices $j=0, \dots, p$ and  signs $\pm$. The group $\bar H_*(\tilde \Xi_k,\Z_2)$ is isomorphic to $(\Z_2)^2$ in dimension $k$ and is trivial in all other dimensions; its two generators are defined by unions over \ $j$ \ of spaces $\Delta_{k,j}^+$ and $\Delta_{k,j}^-$ separately.
\end{corollary}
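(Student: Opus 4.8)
The corollary is now purely a computation of the $\Z_2$-homology of the two explicit complexes furnished by Proposition \ref{cor99}, so the plan is to carry out this finite linear algebra and read off the answer. Throughout I keep in force the boundary conventions $\Theta^{\pm}_{p,-1}=\Theta^{\pm}_{p,p}=0$ and $\Lambda_{p,-1}=\Lambda_{p,p}=0$, which make every formula of Proposition \ref{cor99} meaningful at the extreme indices; essentially all of the bookkeeping lives there. Since the strata $\Delta^{\pm}_{p,j}$ are open $(2k-p)$-balls, the three (or two) terms of the relevant complex sit in Borel--Moore degrees $2k-p$, $2k-p-1$ (and $2k-p-2$), so it suffices to compute the homology of the complex (\ref{rss}) and of its two-term analogue for $p=k$.

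First I treat $1\le p\le k-1$, working upward through (\ref{rss}). At the bottom, $\partial\Theta^-_{p,j}=\Lambda_{p,j}$ already realizes every one of the $p$ generators $\Lambda_{p,j}$, so the boundary map onto the $\Lambda$-cells is onto; hence the homology in the lowest degree vanishes and the kernel of this map has dimension $(3p+1)-p=2p+1$. For the top map I write a general chain as $\sum_j\bigl(a_j\Delta^-_{p,j}+b_j\Delta^+_{p,j}\bigr)$ and collect coefficients: the coefficient of $\Gamma_{p,j}$ is $a_j+b_j$, forcing $a_j=b_j$ for all $j$, while the coefficients of $\Theta^+_{p,m}$ and $\Theta^-_{p,m}$ are $a_m+b_{m+1}$ and $a_{m+1}+b_m$; together with $a_j=b_j$ these force all the $a_j$ (and hence all the $b_j$) to be equal. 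Thus the kernel of the top map is one-dimensional, spanned by the class $\sum_j\bigl(\Delta^-_{p,j}+\Delta^+_{p,j}\bigr)$ of the union of all the strata $\Delta^{\pm}_{p,j}$; this gives the top homology $\Z_2$ with the asserted generator, and shows the top map has rank $2p+1$. Because $\partial^2=0$ its image lies in the $(2p+1)$-dimensional kernel of the bottom map, and equality of dimensions forces equality, killing the middle homology. As the surviving group is isolated, no higher differential of the filtration spectral sequence can act, and we obtain $\bar H_*(\tilde\Xi_p,\Z_2)=\Z_2$ concentrated in degree $2k-p$.

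For $p=k$ the cells $\Gamma_{p,j}$ and $\Lambda_{p,j}$ disappear and the complex collapses to the two-term complex $\langle\Theta^{\pm}_{k,j}\rangle\leftarrow\langle\Delta^{\pm}_{k,j}\rangle$, of dimensions $2k$ and $2k+2$, with $\partial\Delta^-_{k,j}=\Theta^+_{k,j}+\Theta^-_{k,j-1}$ and $\partial\Delta^+_{k,j}=\Theta^+_{k,j-1}+\Theta^-_{k,j}$. Now only the relations $a_m=b_{m+1}$ and $a_{m+1}=b_m$ ($m=0,\dots,k-1$) survive; lacking the constraint $a_j=b_j$ they no longer collapse to a single line but instead force each of $(a_j)$ and $(b_j)$ to be constant on each parity class of $j$, so the kernel is two-dimensional. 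The same relations show the boundary is onto the span of the $\Theta$-cells (its rank is $2k$), so the homology vanishes below the top degree. Hence $\bar H_*(\tilde\Xi_k,\Z_2)=(\Z_2)^2$ in degree $k$; the full union $\sum_j(\Delta^-_{k,j}+\Delta^+_{k,j})$ is again a cycle, and the two basis classes are those attached to the two sign families $\Delta^+_{k,j}$ and $\Delta^-_{k,j}$ living over the half-spaces $\R^{2k}_{\pm}$.

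I do not foresee any real obstacle once Proposition \ref{cor99} is granted: the whole argument is elementary $\Z_2$-linear algebra in explicit bases. The one place demanding care is the behaviour at the extreme indices $j=0$ and $j=p$, i.e.\ the vanishing conventions for $\Theta^{\pm}$ and $\Lambda$ at $-1$ and at $p$; it is exactly the telescoping cancellation of the $\Theta$-terms there that makes $\sum_j(\Delta^-_{p,j}+\Delta^+_{p,j})$ a cycle for every $p\le k$, and, for $p<k$, the additional pairing $a_j=b_j$ enforced by the $\Gamma$-cells that cuts the kernel down to a single line. A miscount at these endpoints would spuriously alter the rank of one of the boundary maps, so verifying them is the substance of the computation.
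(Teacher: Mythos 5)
Your computation follows the route the paper intends: the corollary is pure $\Z_2$-linear algebra on the complex (\ref{rss}) with the differentials of Proposition \ref{cor99}, and for $1\le p\le k-1$ your argument is complete and correct (the bottom map is onto because $\partial\Theta^-_{p,j}=\Lambda_{p,j}$ hits every $\Lambda$-generator; the top kernel is the line spanned by $\sum_j\bigl(\Delta^-_{p,j}+\Delta^+_{p,j}\bigr)$; and the count $(2p+2)-1=(3p+1)-p$ forces the middle homology to vanish).

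The problem is the case $p=k$, where your final sentence contradicts your own computation. You correctly find that the kernel of the two-term differential is cut out by $a_m=b_{m+1}$ and $a_{m+1}=b_m$ $(m=0,\dots,k-1)$, hence is spanned by the two parity-alternating chains $\sum_{j\,\mathrm{even}}\Delta^-_{k,j}+\sum_{j\,\mathrm{odd}}\Delta^+_{k,j}$ and $\sum_{j\,\mathrm{odd}}\Delta^-_{k,j}+\sum_{j\,\mathrm{even}}\Delta^+_{k,j}$. But the classes named in the corollary are then \emph{not} cycles: taking all $b_j=1$, $a_j=0$ violates $a_m=b_{m+1}$, and explicitly $\partial\bigl(\sum_{j=0}^{k}\Delta^-_{k,j}\bigr)=\sum_{m=0}^{k-1}\Theta^+_{k,m}+\sum_{m=0}^{k-1}\Theta^-_{k,m}\neq 0$, because the telescoping of the $\Theta$-terms occurs only when the $+$ and $-$ families are added together. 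So the claim that ``the two basis classes are those attached to the two sign families'' is not a conclusion of your linear algebra but a restatement of the corollary that your linear algebra refutes. Geometrically this is no accident: $\sum_j\Delta^-_{k,j}$ is the fundamental chain of the part of $\Xi_k$ lying over the open half-space $\{\lambda_2<0\}$, and its closure reaches the wall $\{\lambda_2=0\}$ along the $\Theta$-cells with one local sheet, which is exactly what the nonzero boundary formulas of Proposition \ref{cor99} record; hence the sign-family unions cannot be cycles, and the discrepancy lies in the corollary's wording of the generators, not in the boundary formulas. What your argument actually establishes is the rank statement $\bar H_k(\tilde\Xi_k,\Z_2)\cong(\Z_2)^2$ concentrated in dimension $k$ (which is all that is used downstream, in the computation of $E^1_{k,k-1}$ and in Theorem \ref{homdm}), with the two alternating combinations as generators, whose sum is the full union. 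A correct write-up must flag this incompatibility between Proposition \ref{cor99} and the generator description for $p=k$, rather than silently asserting both.
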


Considering additionally $(2k-p)$-dimensional cells $\Omega_{p, j}$ and applying Proposition \ref{mprop2}, we obtain the following result.

\begin{corollary}
Term $E_{p, q}^1$ of our spectral sequence converging to  group $\bar H_*(\sigma,\Z_2)$ and  defined by  filtration $\{\sigma_p\}$  is trivial if $p+ q \neq 2k-1$ or $p>k+1$. Terms $E^1_{p, 2k-1-p}$ are equal to
$(\Z_2)^p$ if $p \leq k-1$, to $(\Z_2)^{k+1}$ if $p=k$, and to \ $(\Z_2)^k$ if $p=k+1$. \hfill $\Box$
\end{corollary}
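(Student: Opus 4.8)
The plan is to pass from the spectral sequence to the Borel--Moore homology of the individual spaces $\Xi_p$, which has just been determined, and then to read off the answer degree by degree. Combining Proposition~\ref{mprop2} with the defining identification $E^1_{p,q}\cong\bar H_{p+q}(\sigma_p\setminus\sigma_{p-1},\Z_2)$ produces the degree shift
\[
E^1_{p,q}\;\cong\;\bar H_{q+1}(\Xi_p,\Z_2),
\]
so the entire statement is equivalent to three assertions about $\bar H_*(\Xi_p,\Z_2)$: that it is concentrated in the single degree $2k-p$ (forcing the nonzero entries onto the antidiagonal $p+q=2k-1$), that it vanishes for $p>k+1$, and that its rank in that degree is $p$, $k+1$, or $k$ according to the three regimes.

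First I would compute $\bar H_*(\Xi_p,\Z_2)$ by splitting $\Xi_p$ into the isolated top-dimensional cells $\Omega_{p,j}$ and the union $\tilde\Xi_p$ of all remaining strata. Because the $\Omega_{p,j}$ are isolated --- which is exactly the content of the Proposition asserting that the closures of the $\Delta^{\pm}_{p,j}$ contain no points of the $\Omega_{p,i}$ --- Borel--Moore homology is additive over this decomposition, giving
\[
\bar H_*(\Xi_p,\Z_2)\;\cong\;\bar H_*(\tilde\Xi_p,\Z_2)\;\oplus\;\bigoplus_j \bar H_*(\Omega_{p,j},\Z_2).
\]
Each $\Omega_{p,j}$ is diffeomorphic to $\R^{2k-p}$ and so contributes one copy of $\Z_2$ in degree $2k-p$; by Proposition~\ref{prd2kmb} there are exactly $p-1$ such cells for every $1\le p\le k+1$. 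For the other summand I would quote the preceding Corollary, which gives $\bar H_*(\tilde\Xi_p,\Z_2)=\Z_2$ in degree $2k-p$ when $p\le k-1$ and $(\Z_2)^2$ in degree $k$ when $p=k$; for $p=k+1$ the space $\tilde\Xi_p$ is itself empty.

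The bookkeeping then closes the argument. For $1\le p\le k-1$ both summands live in degree $2k-p$ and the total rank is $1+(p-1)=p$, so $E^1_{p,2k-1-p}=(\Z_2)^p$. For $p=k$ they again share degree $k$ and the rank is $2+(k-1)=k+1$, so $E^1_{k,k-1}=(\Z_2)^{k+1}$. For $p=k+1$ the half-space strata are empty by Proposition~\ref{hyperb}, only the $k=p-1$ cells $\Omega_{k+1,j}$ survive, and they sit in degree $2k-(k+1)=k-1$; hence $E^1_{k+1,k-2}=(\Z_2)^k$. Finally $\sigma_p\setminus\sigma_{p-1}$ is empty for $p>T=k+1$, which kills all terms with $p>k+1$, while the single-degree concentration of $\bar H_*(\Xi_p,\Z_2)$ forces every term off the antidiagonal $p+q=2k-1$ to vanish.

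I do not expect a genuine obstacle: once the degree shift and the preceding Corollary are in place, the result is pure assembly. The only point demanding care is the simultaneous handling of the three regimes --- keeping straight that the $\Delta^{\pm}$- and $\Omega$-strata share the same dimension $2k-p$, that the $\tilde\Xi$-contribution jumps from $1$ to $2$ and then disappears as $p$ passes through $k$, and that the count of $\Omega$-cells together with the homological degree must be tracked consistently in each case.
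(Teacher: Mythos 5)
Your proposal is correct and follows essentially the same route as the paper: the paper's (very terse) proof is precisely this assembly — the degree shift $E^1_{p,q}\cong\bar H_{q+1}(\Xi_p,\Z_2)$ from Proposition \ref{mprop2}, additivity of Borel--Moore homology over the isolated cells $\Omega_{p,j}$ plus $\tilde\Xi_p$, the count of $p-1$ cells $\Omega_{p,j}$ of dimension $2k-p$, and the preceding Corollary for $\bar H_*(\tilde\Xi_p,\Z_2)$, with emptiness for $p>T=k+1$. Your case bookkeeping ($1+(p-1)=p$, $2+(k-1)=k+1$, and $k$ cells in degree $k-1$ for $p=k+1$) reproduces the stated ranks exactly.
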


Theorem \ref{homdm} in case $D_{2k}^-$ follows immediately from this corollary.

\subsection{Cases $D_{2k}^+$ and $D_{2k-1}$}

\begin{proposition}
\label{prd2kpa}
Exact analogues of all statements of Propositions \ref{hyperb}--\ref{cor99} remain valid for spaces $\Xi_p$ arising in the simplicial resolution of discriminant sets of singularities $D_{2k}^+$ and $D_{2k-1}$, with only two exceptions: 

1$)$ the analogues of $2(k+1)$ spaces $\Delta_{k,j}^{\pm}$, $2k$ spaces $\Theta^{\pm}_{k,j}$ and $k$ spaces $\Omega_{k+1,j}$ vanish;

2$)$ dimensions of all remaining cells in case $D_{2k-1}$ are by 1 lower than dimensions of their namesakes from the study of $D_{2k}$ singularities.
\end{proposition}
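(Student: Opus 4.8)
The plan is to show that the stratification of $\Xi_p$ and the spectral-sequence computation of Propositions \ref{hyperb}--\ref{cor99} transplant essentially unchanged to the families $D_{2k}^+$ and $D_{2k-1}$, and to pin down exactly the two places where corrections are forced. The basic reason this should work is that the $x$-dependence of deformation (\ref{dvd}) is common to all three families: in each case $f_\lambda$ is quadratic in $x$ with leading coefficient $y$ and linear part $\lambda_2 x$, so the equation $\partial f_\lambda/\partial x = 2xy+\lambda_2 = 0$ is literally the same. Hence for $\lambda_2 \neq 0$ the degenerate critical points still lie on the hyperbola $\{2xy = -\lambda_2\}$, and for $\lambda_2 = 0$ on the cross $\{xy = 0\}$. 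Consequently the combinatorial list of stratum types $\Delta^{\pm}$, $\Gamma$, $\Lambda$, $\Theta^{\pm}$, $\Omega$ of Proposition \ref{prd2kmb}, their mutual incidences, and therefore the boundary operators of Proposition \ref{cor99}, are dictated solely by this shared geometry and reproduce verbatim on the surviving cells.

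For the non-maximal strata ($p \leq k-1$) I would argue existence and contractibility exactly as in Proposition \ref{hyperb}, by interpolation: prescribing $p$ distinct points on the relevant branches imposes $2p$ affine conditions on the free coefficients of (\ref{dvd}), independent and solvable because the deformation is versal and $2p$ does not exceed the number of available parameters. The only bookkeeping difference is that $D_{2k-1}$ carries Milnor number $2k-1$ instead of $2k$, i.e. one fewer free coefficient; every affine fibre, hence every cell, therefore loses exactly one dimension. This is exception 2). Since the spectral sequence stays concentrated on the single antidiagonal $p+q = \mu-1$, the uniform shift alters only the gradings and not the ranks, so the homology computation runs identically, and the ranks $\bar H_*(\tilde\Xi_p)=\Z_2$ and the $(p-1)$ surviving cells $\Omega_{p,j}$ reproduce $E^1_{p,\ast}\cong(\Z_2)^p$ for $p\leq k-1$.

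The real content is the vanishing of the maximal strata $\Delta_{k,j}^{\pm}$, $\Theta_{k,j}^{\pm}$, $\Omega_{k+1,j}$ asserted in exception 1). Repeating the computation leading to (\ref{diss}), realizing any of these forces a one-variable polynomial in $y$ — the analogue of (\ref{diss}) for $\Delta_{k,j}^{\pm}$, or the restriction $f_\lambda|_{\{x=0\}}$ (with $\lambda_1=0$) for $\Theta_{k,j}^{\pm}$ and $\Omega_{k+1,j}$ — to carry the maximal number of distinct real double roots, and additionally to admit the symmetric pair of saddles on $\{y=0\}$ in the latter two cases. For $D_{2k}^+$ the obstruction is one of sign: the flipped top monomial makes the required square nonpositive while its value at $y=0$ is positive (for $\Delta$), and forces $\lambda_3 > 0$ while the saddles on $\{y=0\}$ demand $\lambda_3 < 0$ (for $\Theta$, $\Omega$). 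For $D_{2k-1}$ the obstruction is a gap: the deformation omits the monomials $y^{2k-1}$ and $y^{2k-2}$ along $\{x=0\}$, so in each case the two subleading coefficients of the relevant polynomial are forced to vanish, i.e. the elementary symmetric functions $e_1$ and $e_2$ of its root multiset are zero; then $\sum_\rho \rho^2 = e_1^2 - 2e_2 = 0$, and since all roots are real every root must vanish, contradicting that the double roots are distinct and nonzero. Thus all three maximal strata are empty in both families.

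I would then verify that no non-maximal stratum is destroyed by the same mechanism: in each surviving case the cofactor left after extracting the prescribed double roots has degree $\geq 2$ and may absorb the constraints through complex roots, so the configuration persists (in particular $\Omega_{k,j}$ survives, contributing the $\Z_2^{k-1}$ at level $p=k$). The main obstacle is precisely this maximal-stratum analysis, where the two families diverge from $D_{2k}^-$ and from one another; the gap argument for $D_{2k-1}$ (that $e_1 = e_2 = 0$ annihilates a real root multiset) is the delicate point, together with the complementary check that the non-maximal strata survive. With exceptions 1) and 2) in hand, feeding the corrected $E^1$-terms into the spectral sequence of Proposition \ref{mprop2} leaves $\bar H_*$ concentrated in degree $\mu-1$ of total rank $\sum_{p=1}^{k-1} p + (k-1) = \frac{k(k+1)}{2} - 1$, as required by Theorem \ref{homdm}.
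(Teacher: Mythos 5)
Your $D_{2k}^+$ analysis is correct and is essentially the paper's own argument: the analogue of the bracket in (\ref{diss}) has leading coefficient $-1$ but value $\lambda_2^2/4>0$ at $y=0$, so it cannot be a multiple of the square of a real degree-$k$ polynomial, which kills $\Delta_{k,j}^{\pm}$; and your supplementary observation for $\Theta_{k,j}^{\pm}$ and $\Omega_{k+1,j}$ --- that $k-1$ distinct nonzero double roots of the monic factor $h$ on $\{x=0\}$ force $\lambda_3=h(0)=\prod y_i^2>0$, while real critical points on $\{y=0\}$ require $\lambda_3<0$ --- is a correct elaboration of a step the paper compresses into the single phrase that ``all reductions \dots follow from'' the sign of the leading term. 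Your dimension bookkeeping for exception 2) and the final rank $\frac{k(k+1)}{2}-1$ also agree with Theorem \ref{homdm}.

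The $D_{2k-1}$ part, however, contains a genuine gap. Your vanishing argument rests on the premise that along $\{x=0\}$ the deformation omits the monomials $y^{2k-2}$ and $y^{2k-1}$, so that $e_1=e_2=0$ for the root multiset and Newton's identity finishes the job. That premise comes from reading the normal form $\pm(x^2y+y^{2k})$ of Table \ref{t1} literally; but that germ has Milnor number $2k+1$ (it is $D_{2k+1}$), and pairing it with the $(2k-1)$-parameter family (\ref{dvd}) gives a \emph{non-versal} deformation --- contradicting the versality that you yourself invoke for the non-maximal strata, and contradicting the whole framework of the paper. The normal form consistent with $\mu=2k-1$ and with versality of (\ref{dvd}) is $\pm(x^2y+y^{2k-2})$ (the exponent in the table is evidently a misprint; the paper's own proof, which says the bracket has \emph{smaller} degree, only makes sense for this form). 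With the correct form, the restriction of $f_\lambda$ to $\{x=0\}$ is $y^{2k-2}+\lambda_1+\lambda_3y+\dots+\lambda_{2k-1}y^{2k-3}$, all of whose lower coefficients are free parameters, so $e_1$ and $e_2$ are not forced to vanish and your argument collapses at the first step. The paper's actual obstruction is a pure degree count, which is what ``a fortiori \dots smaller degree'' means: the bracket polynomial now has odd degree $2k-1<2k$, hence cannot have $k$ distinct double roots (so no analogue of $\Delta_{k,j}^{\pm}$), and likewise the factor $h$ on $\{x=0\}$ has degree $2k-3<2(k-1)$, hence cannot have $k-1$ distinct nonzero double roots (so no $\Theta_{k,j}^{\pm}$, $\Omega_{k+1,j}$). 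Replacing your coefficient-gap argument by this degree count repairs the proof; the rest of your proposal then goes through.
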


Indeed, all reductions for $D_{2k}^+$ mentioned in statement 1) follow from the fact that the polynomial as in the brackets of formula (\ref{diss}) but with a different sign of the leading term cannot be a multiple of  the square of a real polynomial of degree $k$. A fortiori this is impossible in case $D_{2k-1}$ when the corresponding polynomial in brackets has a smaller degree. \hfill $\Box$

\begin{corollary}
\label{cor98}
For any \ $p=1, \dots, k-1$ the mod 2 Borel--Moore homology group of the union $\tilde \Xi_p$ of all sets  $\Delta_{p, j}^{\pm}$, $\Gamma_{p, j}$, $\Lambda_{p, j}$ and $\Theta_{p, j}^{\pm}$ arising in our simplicial resolution of the discriminant variety of a singularity of type $D_{2k}^+$ $($respectively,  $D_{2k-1})$ is isomorphic to $\Z_2$ in dimension $2k-p$ $($respectively, $2k-p-1)$ and is trivial in all other dimensions. For $p\geq k$ this group is trivial in all dimensions.
\end{corollary}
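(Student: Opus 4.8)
The plan is to reduce the entire computation to the case $D_{2k}^-$ settled above, using Proposition \ref{prd2kpa} as the sole input. For $p$ in the range $1 \leq p \leq k-1$, the two exceptions listed in that proposition affect only the cells at levels $p=k$ and $p=k+1$; consequently the collection of cells $\Delta_{p,j}^{\pm}$, $\Gamma_{p,j}$, $\Lambda_{p,j}$, $\Theta_{p,j}^{\pm}$ making up $\tilde\Xi_p$, together with all the incidence coefficients recorded in Proposition \ref{cor99}, is combinatorially identical to the one arising for $D_{2k}^-$. Hence the filtration by cell dimension produces exactly the three-term complex (\ref{rss}) with the same boundary maps, whose homology was already computed to be $\Z_2$ in the top dimension, generated by the sum of all $\Delta_{p,j}^{\pm}$.

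First I would therefore simply transcribe that homology calculation. For $D_{2k}^+$ no dimensions change, so $\bar H_*(\tilde\Xi_p,\Z_2) \cong \Z_2$ sits in dimension $2k-p$, as claimed. For $D_{2k-1}$, part 2 of Proposition \ref{prd2kpa} lowers the dimension of every surviving cell by one; since the differentials are unchanged, the same one-dimensional homology group appears one dimension lower, namely in $2k-p-1$. Beyond this bookkeeping of dimensions no genuine computation is needed.

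It remains to treat the range $p \geq k$, where I claim $\tilde\Xi_p$ is empty. For $p=k$ the space $\tilde\Xi_k$ in the $D_{2k}^-$ case was assembled solely from the cells $\Delta_{k,j}^{\pm}$ and $\Theta_{k,j}^{\pm}$ (the corresponding complex being the two-term $0 \leftarrow (\Z_2)^{2k} \leftarrow (\Z_2)^{2k+2}$), and these are precisely the cells declared to vanish in part 1 of Proposition \ref{prd2kpa}; thus $\tilde\Xi_k = \emptyset$ for both $D_{2k}^+$ and $D_{2k-1}$. For $p=k+1$ the set $\tilde\Xi_{k+1}$ is already empty in the $D_{2k}^-$ case, since by Proposition \ref{prd2kmb} the only stratum present at that level is of type $\Omega$ (which is not part of $\tilde\Xi$), while Proposition \ref{hyperb} leaves no strata at all for $p>k$. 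In all these cases $\bar H_*(\tilde\Xi_p,\Z_2)$ therefore vanishes.

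I do not expect a serious obstacle: the argument is entirely a reduction to the $D_{2k}^-$ calculation. The only points requiring care are confirming that the cells removed in part 1 of Proposition \ref{prd2kpa} are exactly those that constituted $\tilde\Xi_k$ for $D_{2k}^-$, so that nothing survives at level $k$, and propagating the uniform dimension shift of part 2 consistently for $D_{2k-1}$.
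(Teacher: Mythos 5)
Your proposal is correct and follows essentially the same route as the paper: Corollary \ref{cor98} is obtained there exactly by transporting the $D_{2k}^-$ cell complex (\ref{rss}) and its differentials via Proposition \ref{prd2kpa}, noting that for $p\leq k-1$ nothing changes except the uniform dimension shift in the $D_{2k-1}$ case, and that for $p\geq k$ the constituent cells of $\tilde\Xi_p$ (namely $\Delta_{k,j}^{\pm}$ and $\Theta_{k,j}^{\pm}$ at level $k$, and nothing but $\Omega$-type strata at level $k+1$) all disappear. Your identification of the vanishing cells with precisely those forming $\tilde\Xi_k$ in the $D_{2k}^-$ case is the key bookkeeping point, and it is handled correctly.
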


Considering additionally cells $\Omega_{p, j}$, we obtain the following result.

\begin{corollary}
Term $E_{p, q}^1$ of our spectral sequence converging to group $\bar H_*(\sigma,\Z_2)$, where $\sigma$ is the simplicial resolution of the discriminant variety of type $D_{2k}^+$ $($respectively,  $D_{2k-1})$, is trivial if $p+ q \neq 2k-1$ $($respectively, $p+ q \neq 2k-2)$ or $p>k$. Terms $E^1_{p, 2k-1-p}$ $($respectively, $E^1_{p,2k-2-p})$ are equal to \ $(\Z_2)^p$ \ if \ $p \leq k-1$\  and to \ $(\Z_2)^{k-1}$ \ if \ $p=k$.
\end{corollary}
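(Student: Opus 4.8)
The plan is to read off each term $E^1_{p,q}$ directly from the Borel--Moore homology of the fibers $\Xi_p$, repeating the assembly carried out in the $D_{2k}^-$ case but now feeding in the modified cell structure recorded in Propositions \ref{prd2kpa} and \ref{cor98}. First I would combine the definition $E^1_{p,q}\simeq\bar H_{p+q}(\sigma_p\setminus\sigma_{p-1},\Z_2)$ with the fiber bundle description of Proposition \ref{mprop2}, whose fibers are open $(p-1)$-simplices and which therefore shifts the grading by $p-1$. This yields the clean identification $E^1_{p,q}\simeq\bar H_{q+1}(\Xi_p,\Z_2)$, so the entire statement reduces to computing $\bar H_*(\Xi_p,\Z_2)$ for each $p$ and reinterpreting the grading.

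Next I would split $\Xi_p$ into the closed subset $\tilde\Xi_p$ (the union of the cells $\Delta^\pm_{p,j},\Gamma_{p,j},\Lambda_{p,j},\Theta^\pm_{p,j}$) and the open complement $\bigsqcup_j\Omega_{p,j}$. That $\tilde\Xi_p$ is closed in $\Xi_p$ follows from the analogue, asserted in Proposition \ref{prd2kpa}, of the statement that the closures of the $\Delta^\pm_{p,j}$ avoid the $\Omega_{p,i}$, together with the observation that the top-dimensional cells $\Omega_{p,j}$ cannot lie in the closures of the strictly lower strata $\Gamma,\Lambda,\Theta$. By Corollary \ref{cor98} the group $\bar H_*(\tilde\Xi_p,\Z_2)$ is concentrated in dimension $2k-p$ in case $D_{2k}^+$ (respectively $2k-p-1$ in case $D_{2k-1}$), while each $\Omega_{p,j}$ is a single cell diffeomorphic to $\R^{2k-p}$ (respectively $\R^{2k-p-1}$, by the uniform dimension shift of Proposition \ref{prd2kpa}), contributing one copy of $\Z_2$ in exactly that same dimension. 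Hence in the Borel--Moore long exact sequence of the pair $(\Xi_p,\tilde\Xi_p)$ every group is concentrated in a single degree, the connecting homomorphisms vanish, and the sequence degenerates into a split short exact sequence $\bar H_*(\Xi_p,\Z_2)\simeq\bar H_*(\tilde\Xi_p,\Z_2)\oplus\bigoplus_j\bar H_*(\Omega_{p,j},\Z_2)$. In particular $\bar H_*(\Xi_p,\Z_2)$ lives in that one dimension, which forces $E^1_{p,q}=0$ unless $p+q=2k-1$ (respectively $p+q=2k-2$).

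The remaining work is a short rank count. For $1\le p\le k-1$, Corollary \ref{cor98} supplies one generator and Proposition \ref{prd2kmb} supplies $p-1$ cells $\Omega_{p,j}$, so $\dim_{\Z_2}\bar H_{2k-p}(\Xi_p,\Z_2)=1+(p-1)=p$, giving $E^1_{p,2k-1-p}\simeq(\Z_2)^p$ (respectively $E^1_{p,2k-2-p}$). For $p=k$ the decisive difference from the $D_{2k}^-$ case is that Corollary \ref{cor98} now yields a \emph{trivial} group for $\tilde\Xi_k$, so only the $k-1$ cells $\Omega_{k,j}$ survive and the rank drops to $k-1$. Finally, since here $T=k$ and the cells $\Omega_{k+1,j}$ vanish by Proposition \ref{prd2kpa}(1), the spaces $\Xi_p$ are empty for $p>k$, whence $E^1_{p,q}=0$ in that range.

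I expect the only genuinely delicate point to be the splitting-off of the $\Omega_{p,j}$ as a direct summand: one must confirm that in the $D_{2k}^+$ and $D_{2k-1}$ settings these cells remain top-dimensional and isolated exactly as in the $D_{2k}^-$ analysis, so that $\tilde\Xi_p$ is closed and the concentration in a single dimension makes the long exact sequence split. Everything else is the two-line rank bookkeeping above, together with carefully tracking the uniform drop $2k-p\mapsto 2k-p-1$ that distinguishes case $D_{2k-1}$ from case $D_{2k}^+$.
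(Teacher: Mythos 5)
Your proposal is correct and follows essentially the same route as the paper: the paper's proof is exactly the one-line assembly ``considering additionally cells $\Omega_{p,j}$,'' i.e.\ combining Corollary \ref{cor98} with the $p-1$ isolated top-dimensional cells $\Omega_{p,j}$ (transported to the $D_{2k}^+$ and $D_{2k-1}$ cases by Proposition \ref{prd2kpa}) and regrading via Proposition \ref{mprop2}. Your long-exact-sequence splitting argument merely spells out what the paper treats as the direct-sum decomposition over the isolated cells, so there is no substantive difference.
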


Theorem \ref{homdm} in cases $D_{2k}^+$ \ and \ $D_{2k-1}$ follows immediately from this corollary.

\section{Case of $E_\mu$ singularities}
\label{estie}

In this section we prove that the correspondence between connected components of spaces $\R^\mu \setminus \Sigma$  of types $E_\mu$ and topological types of sets $W(\lambda)$ presented in \S~\ref{types} is one-to-one. The proof uses a combinatorial computer program (see \cite{pro2}) counting all collections of certain topological characteristics of non-discriminant morsifications of  function singularities. Let us describe this program; for a more detailed description see \cite{APLT}, \S V.8 and \cite{fuchs}, \S 3.4.

\subsection{Virtual morsifications and a program handling them}
\label{adad}

Let $f$ be one of polynomials from Table \ref{t1} with Milnor number $\mu$, \ $\R^\mu$ \ be the parameter space of its versal deformation from the list (\ref{avd})--(\ref{e8vd}), and $f_\lambda$, $\lambda \in \R^\mu$, be a  perturbation of $f$, all critical points of which in $\C^2$ are Morse and have $\mu$ different critical values not equal to 0. The group $H_2( V_\lambda,\Z) $ (where $V_\lambda$ is the corresponding Milnor fiber, see \S \ref{woL}) is then isomorphic to $\Z^\mu$ and is generated by {\em vanishing cycles} defined by a system of non-intersecting paths connecting the non-critical value $0$ with all these critical values, see e.g. \cite{AVGZ2}, \cite{APLT}. 

\begin{definition} \rm
A system of paths is {\em standard} if 

1) any two paths connecting 0 with complex conjugate critical values are complex conjugate to one another,

2) all paths connecting 0 with real critical values lie (except for their endpoints) in the domain of $\C^1$ where \ $\mbox{Im } z$ \ is positive but lower than absolute values of  imaginary parts of all non-real critical values of $f_\lambda$.
\end{definition}

All $\mu$ vanishing cycles defined by a standard system of paths can be {\it canonically oriented} by the following conditions:

1) canonical orientations of cycles vanishing in real critical points are induced by the fixed orientation of $\R^3$, see \S V.1.6   of \cite{APLT};

2) homology classes of any two vanishing cycles defined by mutually complex conjugate paths are mapped to one another by the complex conjugation in $\C^3$;

3) the intersection matrix of (naturally ordered) vanishing cycles is lexicographically maximal in the class of such matrices defined by systems of orientations satisfying previous two conditions;

4) if $f_\lambda$ has no real critical points, then the string of intersection indices of vanishing cycles with the naturally oriented submanifold $V_\lambda \cap \R^3$ is lexicographically maximal in the class of such strings defined by systems of orientations satisfying previous three conditions.

\begin{definition} \rm
A {\it virtual morsification} related to $f_\lambda$ is a collection of topological characteristics consisting of 
\begin{itemize}
\item the matrix of intersection indices of canonically oriented vanishing cycles in $V_\lambda$ defined by a standard system of paths, 
\item Morse indices of all real critical points of $f_\lambda$ ordered by increase of their critical values, and 
\item the number of  these points with negative critical values. 
\end{itemize}
\end{definition}

\begin{remark} \rm
If $f_\lambda$ has none or one pair of non-real critical points then there is only one virtual morsification related to $f_\lambda$. 
\end{remark}

The {\em elementary surgeries} of virtual morsifications include three moves of these data reflecting the topological surgeries of corresponding real morsifications, namely
\begin{itemize}
\item[(s1)] collisions of neighboring real critical values (after which the corresponding critical points either meet and  go into the complex domain or change the order in $\R^1$ of their critical values), 
\item[(s2)] collisions of complex conjugate critical values at  line $\R^1$ (after which the corresponding critical points either meet and come to $\R^2$ or miss one another while the imaginary parts of the critical values change their signs), 
\item[(s3)] jumps of real critical values over 0; \\ \hspace*{5cm} and additionally
\item[(s4)] specifically virtual surgeries within the classes of virtual morsifications related to one and the same real morsification, caused by changes of paths going from 0 to imaginary critical values.
\end{itemize}

 Any virtual morsification determines the set of surgeries (s1)--(s4) which can be applied to it and to the related real morsification, and also the results of all these surgeries.

All such possible  changes of virtual morsifications of {\em simple} singularities can be incarnated by actual surgeries of  functions $f_\lambda$: this follows from the properness of the {\em Looijenga map}  $\C^\mu \to \mbox{Sym}^\mu(\C^1) \simeq \C^\mu$ which sends any parameter value $\lambda$ of a complex versal deformation of the form (\ref{avd})--(\ref{e8vd}) to the  unordered set of critical values of the corresponding function $f_\lambda$ (taken with their multiplicities), see \cite{Lo0}. 

Now let us take a real morsification of a simple singularity, calculate a virtual morsification related to it (say, by the method of \cite{GZ-1}, \cite{AC}), and apply to it all possible chains of changes (s1)--(s4). We will obtain the complete list of virtual morsifications related to arbitrary non-discriminant real morsifications of the initial singularity. If we apply only the changes not including the jumps of critical values over 0, then we obtain such a list related to morsifications from the same component of the complement of the discriminant. In particular, we have the following statement.

\begin{proposition}
\label{true}
Let $\R^\mu$ be the parameter space of one of versal deformations $($\ref{avd}$)$--$($\ref{e8vd}$)$ of simple singularities. If two strict morsifications from different connected components of $\R^\mu \setminus \Sigma$ are related to one and the same virtual morsification, then the lists of all virtual morsifications related to real morsifications from these components completely coincide. \hfill $\Box$
\end{proposition}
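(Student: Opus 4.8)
The plan is to read the statement off as a formal consequence of the description of the surgeries (s1)--(s4) given above, together with the observation that each of these surgeries is reversible. The essential point, already recorded just before the statement, is that for a simple singularity the list of all virtual morsifications related to real morsifications lying in one fixed connected component $C$ of $\R^\mu \setminus \Sigma$ coincides with the orbit of the virtual morsification of any single $\lambda \in C$ under the surgeries (s1), (s2) and (s4) --- that is, under all elementary surgeries except the jump (s3) of a critical value over $0$. The reason that exactly (s3) is excluded is that $\Sigma$ is by definition the locus where some critical value equals $0$: the moves (s1), (s2) merely collide or re-order critical values away from $0$ and hence never leave $\R^\mu \setminus \Sigma$, and the move (s4) changes only the chosen system of paths and so does not move $\lambda$ at all, whereas (s3) is precisely a crossing of $\Sigma$.

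First I would make this characterization precise in two inclusions. For the inclusion that every morsification of $C$ produces a virtual morsification in this orbit, I would join an arbitrary $\lambda' \in C$ to the fixed $\lambda$ by a path inside $C$, put the path in general position, and decompose it into a finite chain of elementary transitions; since the path stays in $C$ it never meets $\Sigma$, so no transition of type (s3) occurs, and the induced chain of surgeries (with the (s4) moves accounting for the freedom in the choice of standard systems of paths) carries the virtual morsification of $\lambda$ to that of $\lambda'$. For the reverse inclusion --- that every element of the orbit is actually realized by a morsification in $C$ --- I would invoke the realizability asserted above: by properness of the Looijenga map \cite{Lo0} each combinatorially admissible surgery (s1), (s2) is incarnated by an honest deformation of $f_\lambda$ within $\R^\mu \setminus \Sigma$, and (s4) by a change of paths for a fixed function, so no move leaves $C$.

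With this characterization in hand the statement is immediate. Because each of the surgeries (s1), (s2), (s4) can be undone by a surgery of the same type, the relation ``connected by a chain of surgeries (s1), (s2), (s4)'' is symmetric and transitive; hence these orbits partition the set of all virtual morsifications into equivalence classes, and the orbit of any element $w$ of a given orbit $O$ is again $O$. Now suppose $\lambda \in C$ and $\lambda' \in C'$ are related to one and the same virtual morsification $v$. By the characterization, the list attached to $C$ is the orbit of $v$, and likewise the list attached to $C'$ is the orbit of $v$; since both are the single equivalence class containing $v$, they coincide, regardless of whether $C$ and $C'$ are distinct.

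Finally, the only genuinely substantial ingredient is the realizability half of the characterization, i.e. the assertion that every combinatorially possible surgery is produced by an actual function deformation; this is exactly where the properness of the Looijenga map enters and is the part that does not reduce to formal bookkeeping. Once it is granted (as it is in the discussion preceding the statement), the proposition follows purely from the reversibility of (s1), (s2), (s4) and the resulting partition into orbits, with no further computation needed.
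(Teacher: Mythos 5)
Your proposal is correct and takes essentially the same approach as the paper, which asserts the proposition as an immediate consequence (hence the $\Box$ on the statement) of the preceding discussion: the list attached to a component is exactly the orbit of any one of its virtual morsifications under the surgeries (s1), (s2), (s4), with realizability of all formal surgeries guaranteed by the properness of the Looijenga map. Your write-up simply makes explicit the two inclusions and the reversibility/partition-into-orbits bookkeeping that the paper leaves implicit.
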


\subsection{Equivalence classes of components of $\R^\mu\setminus \Sigma$ and their invariants}

By Proposition \ref{true} the components of $\R^\mu \setminus \Sigma$ split into {\em equivalence classes}, within any of which the lists of related virtual morsifications do coincide, while such lists of virtual morsifications for different equivalence classes do not have common elements. Such an equivalence class can consist of more than one element: e.g. two different components for singularities $D_{2k}^-$  mentioned in item II of Proposition \ref{d2kmmm} are equivalent in this sense. Moreover, in the case $k=2$ a third component joins them: the one mentioned in item III   of the same proposition (with no ovals). However, we will show below that in the case of $E_\mu$ singularities all these equivalence classes are singletons.

These equivalence classes of components of $\R^\mu \setminus \Sigma$ have two basic invariants. One of them is the number of different virtual morsifications in the corresponding list. The second invariant of the equivalence class of the component containing a point $\lambda \in \R^\mu \setminus \Sigma$ is equal to the difference between the number of real critical values of $f_\lambda$ with negative critical value and even Morse index and the number of real critical points with negative value and odd Morse index. This invariant can be defined also as \ $\chi(W(\lambda))-\chi(W(\lambda_+))$ where $\chi$ is the Euler characteristic and $f_{\lambda_+} \equiv f +1$.

Both our procedures of consecutive formal changes of virtual morsifications (including or not the jumps of critical values over 0)
are realized by Fortran programs, see \cite{pro2}, \cite{APLT}, \cite{fuchs}.
These programs were applied to singularities \ $+E_6, E_7$  and $E_8$, the results of their work imply the following statements.

\begin{proposition}
\label{prog}
In the case $+E_6$ $($respectively,  $E_7$,  $E_8)$ the set of equivalence classes of components of $\R^\mu \setminus \Sigma$  consists of exactly 5 $($respectively, 10, 10$)$ elements. All these equivalence classes are separated by the pair of basic invariants. The 5 $($respectively, 10, 10$)$ morsifications constructed in \S \ref{e6reaa} $($respectively, \S \ref{e7reaa}, \S \ref{e8reaa}$)$ belong to components from different equivalence classes.
\end{proposition}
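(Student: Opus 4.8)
The proof is computational, carried out by the program of \cite{pro2}, and the plan is to explain how its three stages yield the three assertions. First I would fix, for each of $+E_6$, $E_7$, $E_8$, a single explicit non-discriminant real morsification --- for instance one of those whose zero set is drawn in \S\ref{e6reaa}--\S\ref{e8reaa} --- and compute the associated virtual morsification, i.e. the intersection matrix of the canonically oriented vanishing cycles together with the Morse-index string and the count of negative critical points. The intersection matrices are read off from the Dynkin-type data of these perturbations supplied by \cite{GZ-1} and \cite{AC}, while the canonical orientation conventions of \S\ref{adad} fix the remaining sign ambiguities. This one virtual morsification is the seed from which the whole combinatorial picture is generated.

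Next I would let the program apply all chains of the elementary surgeries (s1)--(s4) to this seed, producing the full graph $G$ of virtual morsifications. The justification that $G$ is neither too large nor too small is the properness of the Looijenga map recalled in \S\ref{adad}: every formal surgery chain is realized by an actual path of functions $f_\lambda$, so $G$ is exactly the set of virtual morsifications attached to \emph{all} non-discriminant real morsifications of the singularity. Running the program a second time with surgery (s3) (the jumps of critical values over $0$) switched off then decomposes $G$ into sub-orbits. Since surgeries (s1), (s2), (s4) take place at nonzero critical values and hence never change the sign of any critical value, they keep $f_\lambda$ inside one component of $\R^\mu\setminus\Sigma$, whereas (s3) is the only move crossing $\Sigma$; consequently each sub-orbit is exactly the list of virtual morsifications common to the components of a single equivalence class. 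By Proposition \ref{true} distinct classes have disjoint lists, so the number of sub-orbits equals the number of equivalence classes. The program reports this number to be $5$, $10$, $10$ in the three cases, which is the first assertion.

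For the second assertion I would have the program evaluate the two basic invariants on each sub-orbit. The first invariant, the cardinality of the list, is already tabulated; the second, $\chi(W(\lambda))-\chi(W(\lambda_+))$, is read directly from the stored Morse-index string and negative-point count, since it equals the number of negative critical points of even index minus that of odd index. Checking that the resulting pairs are pairwise distinct across the $5$ (respectively $10$, $10$) sub-orbits gives the separation claim. Finally, for the third assertion I would locate each of the $5$ (respectively $10$, $10$) morsifications constructed in \S\ref{e6reaa}--\S\ref{e8reaa} inside $G$ by computing its virtual morsification and matching it to a vertex, and verify that these vertices lie in distinct sub-orbits --- equivalently, by the previous step, that they already carry distinct invariant pairs.

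The enumeration itself is finite and mechanical; the main obstacle is instead the guarantee that the dictionary of surgeries (s1)--(s4) is \emph{complete} and that each combinatorial move corresponds to a genuine wall-crossing and conversely. This is precisely where the properness of the Looijenga map is indispensable, and it is the sole non-finite, non-mechanical ingredient on which the correctness of the computer output rests.
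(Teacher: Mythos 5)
Your proposal is correct, and it runs on the same engine as the paper's proof: the two enumeration programs of \cite{pro2}, Proposition \ref{true} for disjointness of the lists, and the properness of the Looijenga map to guarantee that formal surgery chains are realized by actual deformations. Where you genuinely differ is in how completeness of the list of equivalence classes is established. You propose computing the full decomposition of the set $G$ of all virtual morsifications into orbits of the restricted surgeries (s1), (s2), (s4), and reading off the number of equivalence classes as the number of orbits. The paper's computation is lighter: the unrestricted program produces only the total cardinality of $G$ (456, 8648, 51468 in the three cases); the restricted program is run only from the explicitly constructed morsifications of \S\ref{e6reaa}--\S\ref{e8reaa}, producing their list sizes (e.g. 208, 138, 74, 18, 18 for $+E_6$); the constructed components are separated by the second invariant (for $E_7$ and $E_8$ with the help of the involutions $f_\lambda(x,y)\mapsto -f_\lambda(-x,y)$ and $f_\lambda(x,y)\mapsto -f_\lambda(-x,-y)$, which preserve the first invariant and send the second invariant $v$ to $-1-v$, respectively to $-v$); and completeness then follows from arithmetic exhaustion: the list sizes of the pairwise distinct constructed classes sum exactly to the total cardinality of $G$, and since distinct classes have disjoint lists, no further class can exist. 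Your route trades the symmetry bookkeeping and the exhaustion-by-counting step for a heavier computation (a full orbit decomposition instead of a handful of orbit sizes); the paper's route additionally records the per-class data (list sizes and invariant values) that are reused later in the proof of Theorem \ref{the9}. One small inaccuracy worth fixing: properness of the Looijenga map is needed only for the implication that every formal surgery is realized by an actual path of functions; the converse fact --- that the moves (s1)--(s4) exhaust all transitions occurring along generic paths in $\R^\mu$, so that $G$ generated from one seed is not too small --- is a separate, transversality-type statement and not a consequence of properness.
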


\begin{proof}
The first program states that there are exactly 456 different virtual morsifications of singularity $+E_6$. The second program was applied separately to the five morsifications constructed in \S \ref{e6reaa} and realizing topological types of sets $W(\lambda)$ shown in the five pictures of Fig.~\ref{e61}. The second basic invariant takes values 0, 1, 2, 3, and $-1$ on these components, hence they belong to different equivalence classes. The program states that these components of $\R^6 \setminus \Sigma$ consist of real morsifications related respectively with 208, 138, 74, 18 and 18 different virtual morsifications. 
 The sum of these numbers is equal to 456, hence all the components of $\R^6 \setminus \Sigma$ belong to equivalence classes of these five components. 

In a similar way, there are exactly 8648 different virtual morsifications of $E_7$ singularity. Five components of $\R^7 \setminus \Sigma$ realized in \S \ref{e7reaa} and representing topological types shown in Fig.~\ref{e71} provide respectively 2040, 1292, 632, 144 and 216 different virtual morsifications, in total exactly a half of 8648. Five components representing the pictures symmetric to ones from Fig.~\ref{e71} have the same values of the first basic invariant but different values of the second one. Namely, the sum of values of the second invariant for any two such symmetric components is equal to $-1$.

Also, there are 51468 different virtual morsifications of $E_8$ singularity. Six components of $\R^8 \setminus \Sigma$ represented by the six functions from \S~\ref{e8reaa} contain real morsifications related respectively to 16600, 8286, 5608, 2652, 576 and 624 virtual morsifications. The corresponding values of the second invariant are equal respectively to $0, 1, 2, 3, 4,$ and $0$. The symmetry mentioned in Proposition \ref{e8pro} takes the four middle components to  components with equal values of the first invariant but opposite values of the second one, so they belong to different equivalence classes. The sum of values of the first invariant over all ten classes is equal to 51468. \end{proof}

\begin{remark} \rm
Our program can also be applied to non-simple singularities, but in this case there is no guarantee that all chains of surgeries (s1)--(s4) of virtual morsifications can be incarnated by paths in $\R^{\mu}$. Nevertheless, this application can provide various useful information, e.g. the conjectures on the existence of morsifications with certain topological properties (which can be further checked by other methods) or strict proof of the absence of morsifications with some such properties, see \cite{APLT}, \cite{fuchs}. 
The answers for $D_\mu$ series were also guessed with the help of this program.

In addition, the number of virtual morsifications produced by our second program starting from any real morsification $f_\lambda$ always is an invariant of the corresponding component of the complement of the discriminant variety.
\end{remark}

\subsection{Triviality of equivalence classes of components  for singularities $E_\mu$} 

\begin{theorem}
\label{the9}
 Each of the 25 equivalence classes mentioned in Proposition \ref{prog} consists of exactly one element.
\end{theorem}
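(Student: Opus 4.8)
The plan is to match the lower bound on the number of components coming from the explicit realizations of \S\ref{realiz} against the structure of the equivalence classes supplied by Proposition~\ref{prog}, reinterpreting those classes through Looijenga's description of \S\ref{woL}. First I would record the lower bound. By \S\ref{realiz} the $5$, $10$, $10$ topological types of Propositions~\ref{e6pro}, \ref{e7pro}, \ref{e8pro} are all realized, and they are pairwise distinct; so by the easy direction of the equivalence (points of one component yield topologically equivalent sets $W(\lambda)$) they lie in that many pairwise different components. By Proposition~\ref{prog} these components lie in pairwise different equivalence classes and there are exactly that many classes in total, so the classes are precisely the classes of the realized components, each containing at least one of them. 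It remains to exclude a second component in any class.

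The decisive step is to read ``lying in the same equivalence class'' group-theoretically. By Theorem~\ref{1.7.2.thm} a component is the $M_\lambda$-conjugacy class of the involution $\sigma_\lambda$. I claim that the datum of a virtual morsification --- the intersection matrix of the canonically oriented vanishing cycles together with the Morse and sign data --- is invariant under the full normalizer $N_\lambda$, not merely under $M_\lambda$, because the lexicographic conventions fixing the canonical orientations are symmetric under the Dynkin-diagram automorphisms that generate $N_\lambda/M_\lambda$ (Proposition~\ref{1.7.2.prop}). Granting this, two components carry the same list of virtual morsifications exactly when their involutions are $N_\lambda$-conjugate, so an equivalence class is precisely an orbit of the group $N/M$ acting on the set of components. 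This reading is corroborated by the $D_\mu$ series, where the two-element classes of case~II in Proposition~\ref{d2kmmm}, and the three-element class appearing for $D_4$, are exactly the orbits of the groups $\Z_2$ and $S(3)$.

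For $E_7$ and $E_8$ the theorem then follows immediately: Proposition~\ref{1.7.2.prop} makes $N/M$ trivial, so every orbit, hence every equivalence class, is a single component, and the ten realized components exhaust them. The genuinely harder case, and the step I expect to be the main obstacle, is $+E_6$, where $N/M\simeq\Z_2$: a priori one class could be a two-element orbit concealing a sixth component. Proposition~\ref{prog} cannot detect this, since the program counts virtual morsifications and a merged pair of components would simply share one list, leaving the total $456$ and the five per-class counts untouched. To close the gap I would prove that the nontrivial Dynkin involution $g$ fixes each of the five $M_\lambda$-conjugacy classes of $\sigma_\lambda$, i.e.\ acts trivially on components; since $g\in N_\lambda$ preserves the second basic invariant $\chi(W(\lambda))-\chi(W(\lambda_+))$, it suffices to exhibit, on each of the five explicit morsifications of \S\ref{e6reaa}, a real symmetry of deformation~(\ref{e6vd}) inducing $g$ and carrying the component to itself. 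Failing a clean symmetry argument, the same conclusion follows from the independent determination of the number of components of $\R^6\setminus\Sigma$ for $E_6$ recorded in the Remark after Theorem~\ref{mthm}, which supplies the upper bound $5$ and thereby forces all five orbits to be singletons.
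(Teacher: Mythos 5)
Your argument hinges entirely on the claim that two components carrying the same list of virtual morsifications must have $N_\lambda$-conjugate conjugation involutions, and this is precisely the step you never prove; worse, the justification you offer points in the wrong direction. What you assert (vaguely, via symmetry of the lexicographic conventions) is that virtual-morsification data is \emph{invariant} under $N_\lambda$; even granted, that yields only the implication ``$N_\lambda$-conjugate involutions $\Rightarrow$ equal lists,'' i.e.\ that the passage from components to lists factors through $N/M$-orbits. What Theorem~\ref{the9} needs is the \emph{reverse} implication: equal lists $\Rightarrow$ $N_\lambda$-conjugate involutions, so that for $E_7$, $E_8$ (where $N/M$ is trivial by Proposition~\ref{1.7.2.prop}) equivalence forces $M_\lambda$-conjugacy and hence, by Theorem~\ref{1.7.2.thm}, equality of components. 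That reverse implication is a real theorem requiring proof: one must show that a virtual morsification determines the matrix of $\sigma_\lambda$ in the distinguished basis of vanishing cycles (via Picard--Lefschetz formulas expressing $\sigma_\lambda$ of a cycle attached to a standard path through the monodromy around intermediate critical values, with signs governed by Morse indices, and a permutation of cycles attached to conjugate critical values), and then that the resulting base-change isometry takes distinguished basis to distinguished basis and therefore normalizes the monodromy group. None of this appears in your proposal; without it there is no bridge from Proposition~\ref{prog} to Looijenga's classification, and your ``exactly when'' is a non sequitur. (You also assume, without proof, that conjugation by $N_\lambda$ induces a well-defined action on the set of components, which already requires centrality of the class $\{M\sigma\}$ in $N/M$.)

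For comparison, the paper avoids needing any such structural claim: it applies Theorem~\ref{1.7.2.thm} only to the \emph{basic} component, where the conjugation involution is the identity and $M$- versus $N$-conjugacy is a non-issue, and then treats every other component by a geometric induction along boundary strata of the basic component (properness of the Looijenga map, adding constants to cross a specified stratum, and a $D_4^-$ corner stratum for the last component), transferring to the remaining components by the explicit symmetry $f_\lambda(x,y)\mapsto -f_\lambda(-x,-y)$. Your treatment of $E_6$ has an additional gap even granting the key claim: the symmetry route requires a real automorphism of deformation~(\ref{e6vd}) that provably induces the \emph{nontrivial} class $g\in N/M\simeq\Z_2$ and fixes each component --- exhibiting some symmetry is not enough, since one inducing the trivial class tells you nothing about the action of $g$ --- and you verify neither point. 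Your fallback via Sedykh's count (the Remark after Theorem~\ref{mthm}) is logically sound, since the total of $5$ components plus $5$ nonempty equivalence classes does force singletons, but it replaces a proof of the $E_6$ case by a citation, which is not in the spirit of the theorem being proved.
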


 This theorem and Proposition \ref{prog} imply  Propositions \ref{e6pro}, \ref{e7pro}, and \ref{e8pro} and the part of Theorem  \ref{mthm2} concerning $E_\mu$ singularities. Indeed, they state that the numbers of different components of \ $\R^\mu \setminus \Sigma$ \ in  cases  \ $E_\mu$ \ are equal to the numbers of topological types of sets $W(\lambda)$  realized in \S \ref{realiz}.

 Theorem \ref{mthm} is a corollary of these results. \smallskip

\noindent
{\it Proof of Theorem \ref{the9} for singularity \ $E_8.$} 
In paragraphs {\bf I, II, \dots, VI} below we consider the components of $\R^8 \setminus \Sigma$  
containing the perturbations of $f$ constructed in \S \ref{e8reaa} and realizing the topological types 
 indicated respectively  in the fifth, fourth, third, second, first and sixth pictures of Fig.~\ref{e81}. For any of these components we prove that it is the unique representative of its equivalence class.

\begin{definition} \rm 
The {\it basic perturbation} $f_{\lambda_0}$, $\lambda_0 \in \R^\mu \setminus \Sigma$, of the standard singularity of type $+E_6$ (respectively, $E_7$, $E_8$)  is the polynomial (\ref{e6vd}) (respectively, (\ref{e7vd}), (\ref{e8vd})) 
constructed in \S \ref{e6reaa} (respectively, \ref{e7reaa}, \ref{e8reaa}) and  realizing the fourth (respectively, fourth,
 fifth) picture of Fig.~\ref{e61} (respectively, \ref{e71}, \ref{e81}). 
The {\it basic component} of $\R^\mu \setminus \Sigma$ is the 
component containing the  basic perturbation.
\end{definition}

{\bf I.} Basic perturbation $f_{\lambda_0}$ of the singularity of type $E_8$  has four minima with negative critical values and four saddlepoints with positive ones. It  can be obtained from the perturbation shown in Fig.~\ref{e8proof} (left) by adding a very small generic positive function, keeping all critical values at minima negative but making all eight critical values different. 
All the possible surgeries of functions (see \S \ref{adad}) within its component of $\R^8 \setminus \Sigma$ are the changes of orders of critical values only (not changing their signs), therefore each function from this component has eight real critical points of these types.  (Notice that the value 576 of the first basic invariant in this case is nothing else than $(4!)^2$.) Any component of $\R^8 \setminus \Sigma$ from the  equivalence class of the basic component  consists only of functions $f_\lambda$ satisfying these conditions on the critical points and values.
The complex conjugation acts trivially on the homology group of the Milnor fiber  $V_\lambda = \{(x, y, z) \in  \C^3| f_\lambda(x, y)+z^2=0\}$ for  any polynomial $f_\lambda$ of the form (\ref{e8vd}) satisfying these conditions. By Theorem \ref{1.7.2.thm} all such polynomials belong to a single component of $\R^8 \setminus \Sigma$ (corresponding to the conjugacy class of unity), so the equivalence class of the basic component of $\R^8 \setminus \Sigma$ consists of one element. 
\unitlength 0.8mm
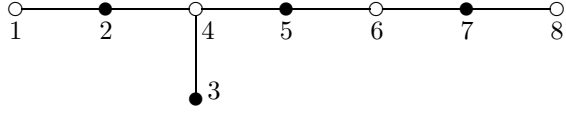
\begin{figure}
\begin{center}
\begin{picture}(90,15)
\put(0,15){\circle{2}} 
\put(15,15){\circle*{2}} 
\put(30,15){\circle{2}} 
\put(45,15){\circle*{2}} 
\put(60,15){\circle{2}} 
\put(75,15){\circle*{2}} 
\put(90,15){\circle{2}} 
\put(30,0){\circle*{2}}
\put(1,15){\line(1,0){13}}  
\put(16,15){\line(1,0){13}} 
\put(31,15){\line(1,0){13}} 
\put(46,15){\line(1,0){13}} 
\put(61,15){\line(1,0){13}} 
\put(76,15){\line(1,0){13}} 
\put(30,14){\line(0,-1){13}}
\put(-1,10){$1$} 
\put(14,10){$2$} 
\put(31,10){$4$} 
\put(44,10){$5$} 
\put(59,10){$6$} 
\put(74,10){$7$} 
\put(89,10){$8$} 
\put(32,0){$3$}
\end{picture}
\end{center}
\caption{Dynkin graph for a basic morsification of the class $E_8$}
\label{Dd}
 \end{figure}

{\bf II.} By \cite{AC}, \cite{GZ-1},  the  Dynkin graph encoding the intersection matrix of  the standard system of (suitably oriented) vanishing cycles of function  $f_{\lambda_0}(x, y) + z^2: \C^3 \to \C$  is as shown in Fig.~\ref{Dd}; white (respectively, black) circles in it correspond to cycles vanishing in minima (respectively, in saddlepoints). Let us number the corresponding critical points of $f_{\lambda_0}$ as shown in this picture. By Theorem \ref{lotriv} the basic component of $\R^\mu \setminus \Sigma$  is contractible, hence this numbering can be uniquely extended to a numbering of critical points of all polynomials $f_\lambda$ from this component depending continuously on $\lambda$. The corresponding labelled Dynkin graphs will be then isomorphic to this one for all $\lambda$ from the basic component such that all critical values of $f_\lambda$ are different.
 
The {\em ordered Looijenga map} from the basic component to $\R^8$ (sending any point $\lambda$ to the set of values of  critical points of $f_\lambda$ ordered in this way) is a diffeomorphism of this component to an octant which is a product of four coordinate intervals $(-\infty,0)$ and four intervals $(0,+\infty)$. The boundary of this component in the parameter space of deformation (\ref{e8vd}) contains exactly eight smooth 7-dimensional open strata, on any of which one of the critical values of corresponding functions becomes equal to 0.
The ordered Looijenga map can be extended to these strata and is non-singular at their points, establishing a diffeomorphism of each of these strata to the product of some seven intervals. In particular, each of these strata is path-connected and separates the basic component of \ $\R^8 \setminus \Sigma$ \ from a single other component. (As we will see later, all these eight neighbour components actually coincide).

Consider the stratum of this kind, on which the critical values of the 7-th critical points of corresponding functions from the basic component become equal to zero. Denote by {\large $C_1$} the component  of $\R^8 \setminus \Sigma$ separated by this stratum from the basic one. The two basic invariants of {\large $C_1$}  are equal to 2652 and 3. Let $\lambda$ be a generic point of our stratum, then subtracting any sufficiently small positive constant function from $f_{\lambda}$ we obtain a function $f_{\lambda'} \in \mbox{\large $C_1$}$ such that 
\begin{itemize}
\item[(i)]
all critical points of $f_{\lambda'}$ are real: these are four minima and four saddlepoints; 
\item[(ii)] all critical values are different;
\item[(iii)] 
the critical values at all saddlepoints are greater than all values at minima; 
\item[(iv)]
the (non-labelled) Dynkin graph of the intersection matrix of the standard system of vanishing cycles is isomorphic to the one from Fig.~\ref{Dd}; 
\item[(v)]
exactly one saddlepoint has a negative critical value: namely, the one corresponding to the second vertex from the end of the longest tail of this graph. 
\end{itemize}
All these properties are formulated in terms of the unique virtual morsification related to $f_{\lambda'}$, hence any component of $\R^8 \setminus \Sigma$ equivalent to {\large $C_1$} also contains a morsification $f_{\tilde \lambda}$ with these properties. Let $c$ be the only negative critical value of $f_{\tilde \lambda}$ at a saddlepoint.
 Adding to $f_{\tilde \lambda}$  constant functions  from the segment $[0,-c+\varepsilon]$ we move  $f_{\tilde \lambda}$ to a component equivalent (and hence equal) to the basic component through the above considered boundary stratum. Therefore $f_{\tilde \lambda}$ also belongs to component {\large $C_1$}. 
So, the equivalence class of  {\large $C_1$}    consists of a single element. 

{\bf III.} The boundary of the basic component contains also ${4 \choose 2}=6$ open strata of dimension 6, any of which consists of functions $f_\lambda$ having exactly two saddlepoints with critical value 0. These strata are smooth and diffeomorphic to $\R^6$.
Consider such a 6-dimensional corner stratum, consisting of functions $f_\lambda$ with the vanishing critical values at the critical points corresponding to  edges 7 and 5 in Fig.~\ref{Dd}.  In a neighbourhood of any  point of this stratum the discriminant variety is ambient diffeomorphic to a pair of intersecting hyperplanes in $\R^8$. The set $\R^8 \setminus \Sigma$ consists in this neighbourhood of four local components, one of which belongs to the basic global component, two others belong to some global components of $\R^8 \setminus \Sigma$ from the equivalence class of {\large $C_1$}, and hence equal to {\large $C_1$} (and to one another). Denote by {\large $C_2$} the global component of $\R^8 \setminus \Sigma$  containing the fourth local component. It belongs to the equivalence class characterized by values 5608 and \ $2$ \ of basic invariants. Its points $f_\lambda$ lying close to our stratum satisfy all conditions (i)--(iv) from the list  of the previous paragraph {\bf II}, and also the modified condition (v) according to which exactly {\em two} saddlepoints, namely the ones corresponding to the second and the fourth vertices from the end of the longest tail of the graph, have negative critical values. 

Again, any component {\large $\tilde C_2$}  of \ $\R^8 \setminus \Sigma$ \ which is equivalent to {\large $C_2$} contains a function  satisfying all these conditions. Using the properness of the Looijenga map, we can move this function within the same component to a function $f_\lambda$ satisfying all these conditions and having equal critical values at two saddlepoints at which these values are negative. Subtracting the constant functions from $f_\lambda$, we enter the basic component from component {\large $\tilde C_2$} through the uniquely defined 6-dimensional stratum of the boundary of the basic component; therefore {\large $\tilde C_2 = C_2$}.

{\bf IV and V.} Analogous arguments concerning the neighborhoods of certain 5- and 4-dimensional smooth corner strata of the boundary of the basic component prove that equivalence classes of components of \ $\R^8 \setminus \Sigma$ \ characterized by the values $(8286,+1)$ and $(16600,0)$ of basic invariants consist of single elements. Denote these components respectively by {\large $C_3$} and {\large $C_4$}.

{\bf VI.} Denote  by {\large $C_5$} the component of $\R^8 \setminus \Sigma$  realized in \S~\ref{e8reaa} with the help of Fig.~\ref{e8proof} (right). Its basic invariants are equal to $624$ and $0$. The proof of its uniqueness in its equivalence class of components
 is based on the consideration of a more complicated stratum of the boundary of the basic component. This stratum is approached from the basic component when the critical values at critical points No. 2, 3, 4 and 5 in Fig.~\ref{Dd} tend to zero. (These points arise from the three vertices and an interior point of the triangle  bounded by the zero set  in Fig.~\ref{e8proof} (left).) This stratum is diffeomorphic to $\R^4$: the global parameters along it are the critical values at the remaining four critical points. For any point $\lambda$ of this stratum the function $f_\lambda$ has a singularity of type $D_4^-$.  An arbitrary 4-dimensional transversal slice of this stratum in $\R^8$ at such a point is the parameter space of a versal deformation of this singularity. According to Proposition \ref{d2kmmm}, the intersection of this slice with $\R^8 \setminus \Sigma$ consists of seven local components. One of them belongs to the basic global component, one to {\large $C_1$}, three to {\large $C_2$},  one to {\large $C_3$} and one to {\large $C_5$}. In particular the equivalence class of {\large $C_5$} is  represented in a neighbourhood of our stratum by a single local component (and a fortiori by a single global component of $\R^8 \setminus \Sigma$). 

\begin{lemma}  Any component of $\R^8 \setminus \Sigma$ equivalent to  {\large $C_5$} approaches this stratum.
\label{le00}
\end{lemma}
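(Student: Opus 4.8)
The plan is to detect the approach to this stratum at the level of virtual morsifications, and then to realise the required confluence by the properness of the Looijenga map. First I would record the virtual morsification of a function $f_\lambda \in$ {\large $C_5$} lying very close to the stratum. For such an $f_\lambda$ the four critical points No.~2, 3, 4, 5 of Fig.~\ref{Dd}---those arising from the vertices and the interior point of the triangle in Fig.~\ref{e8proof} (left)---have vanishing cycles spanning the $D_4$ sub-Dynkin diagram of Fig.~\ref{Dd}, and their critical values tend to zero as $\lambda$ approaches the stratum; the configuration of these four points---their Morse indices, the signs of their critical values, and which of them are real---is that of the single local component that {\large $C_5$} cuts out in the transversal $D_4^-$ slice, while the remaining four critical points keep their values bounded away from zero. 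In other words, near the stratum these four critical points constitute a $D_4^-$ morsification of the {\large $C_5$}-type, and all of this data is recorded in the virtual morsification related to $f_\lambda$.

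By Proposition \ref{true}, any component {\large $\tilde C_5$} equivalent to {\large $C_5$} contains a function $f_{\tilde\lambda}$ with precisely this virtual morsification; in particular the four critical points of $f_{\tilde\lambda}$ distinguished by the $D_4$ sub-root-system form a $D_4^-$ morsification of the {\large $C_5$}-type. I would then contract this sub-morsification back to its singular centre: using the properness of the Looijenga map (as in \S \ref{adad}), I deform $f_{\tilde\lambda}$ inside {\large $\tilde C_5$}, driving the four distinguished critical values to zero and forcing the corresponding critical points to coalesce. Since this is exactly the confluence that produces a $D_4^-$ singularity, the limiting function has a $D_4^-$ point of critical value zero and thus lies on the stratum, so {\large $\tilde C_5$} approaches it.

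The step I expect to be the main obstacle is this contraction, where the codimension jumps from the $1$ of the earlier cases {\bf I}--{\bf V} to $4$. The essential point is to guarantee that the four distinguished critical points genuinely coalesce into a $D_4^-$ singularity rather than merely acquiring a common critical value while remaining distinct---which would place the limit on the codimension-$4$ stratum of quadruple transversal sheets of $\Sigma$ instead of on the $D_4^-$ stratum. This is forced by the $D_4$ sub-root-system structure recorded in the virtual morsification, which identifies the four points as a genuine $D_4^-$ morsification whose contraction must collide them into a $D_4^-$ point; the realisability of this contraction inside {\large $\tilde C_5$} follows from the properness of the Looijenga map for simple singularities. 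One must also verify that the deformation stays in {\large $\tilde C_5$} until it reaches the stratum---which holds because the four distinguished values are driven to zero together while the other four remain fixed and nonzero---and that the resulting real form is $D_4^-$, not $D_4^+$, as determined by the sign pattern of the {\large $C_5$}-type picture.
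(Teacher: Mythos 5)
Your first two steps coincide with the paper's own proof: you encode the near-stratum configuration of a $C_5$-point in its virtual morsification (the paper's conditions (i)--(v), phrased via the graph of Fig.~\ref{Ddd}), invoke Proposition \ref{true} to produce a function with the same virtual morsification in any equivalent component $\tilde C_5$, and use properness of the Looijenga map to drive the four distinguished critical values to zero along a path staying in $\tilde C_5$. Your discussion of why the four critical points must genuinely coalesce (the $D_4$ subgraph is connected, so the central cycle has nonzero intersection index with the other three, which is impossible for cycles vanishing at distinct limit points) is actually more explicit than the paper, which simply asserts that $f_{I(1)}$ has a $D_4^-$ point. One slip of bookkeeping: for a function in $C_5$ the four relevant points are the three saddles and the \emph{maximum} coming from the triangle of Fig.~\ref{e8proof} (right), i.e. vertices 1, 3, 4, 5 of Fig.~\ref{Ddd}; the numbering 2, 3, 4, 5 of Fig.~\ref{Dd} and the triangle of Fig.~\ref{e8proof} (left) (three saddles and a \emph{minimum}) refer to the basic component, whose canonical numbering is not available on $\tilde C_5$.

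The genuine gap is your final inference: ``the limiting function has a $D_4^-$ point of critical value zero and thus lies on the stratum.'' The stratum of the lemma is not the locus of all $\lambda$ with a $D_4^-$ point of value $0$; it is the specific $4$-dimensional piece of the boundary of the \emph{basic component}, approached when the values at the canonically numbered points 2, 3, 4, 5 of Fig.~\ref{Dd} tend to zero. The $D_4^-$ locus, even with your side conditions (three minima with negative values, one saddle with positive value), is a priori disconnected, and distinct components of it may carry inequivalent global data --- for instance, different ways the vanishing cycles of the four surviving Morse points attach to the $D_4$ sublattice inside the $E_8$ lattice. If $I(1)$ landed on a ``wrong'' component of the $D_4^-$ locus, the local picture there would attach the $C_5$-type local component to some other global component, and no conclusion $\tilde C_5 = C_5$ would follow. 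The paper closes exactly this hole by an extra step you are missing: it perturbs $f_{I(1)}$ at its $D_4^-$ point in accordance with Fig.~\ref{d80} (left) with one oval, notes that the virtual morsification of the perturbed function is completely determined by that of $f_\lambda$ and coincides with that of the basic morsification $f_{\lambda_0}$, and then uses the fact (established in paragraph {\bf I}) that the basic component is the \emph{unique} member of its equivalence class to conclude that the perturbed function lies in the basic component; since its critical values at the canonically numbered points 2, 3, 4, 5 are the small ones, the limit $I(1)$ is a boundary point of the basic component lying precisely on ``this stratum.'' Without an argument of this kind identifying the limit as a boundary point of the basic component, your proof only shows that $\tilde C_5$ approaches the $D_4^-$ locus somewhere, which is not what the lemma (or its subsequent use) requires.
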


\noindent{\it Proof of Lemma \ref{le00}.}
The intersection of our slice with the basic component corresponds to the component of the complement of the discriminant of $D_4^-$ singularity shown in Fig.~\ref{d80} (left) with one oval, and the intersection with {\large $C_5$} corresponds to the component shown in Fig.~\ref{d80} (right), also with one oval. The latter intersection set contains points $\lambda$ arbitrarily close to our stratum and such that the corresponding functions $f_\lambda$ satisfy the following conditions:
\unitlength 1mm
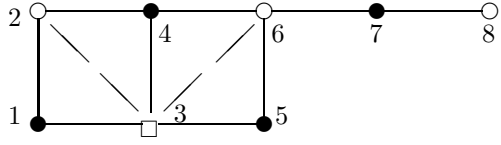
\begin{figure}
\begin{center}
\begin{picture}(90,15)
\put(15,15){\circle{2}} 
\put(15,0){\circle*{2}} 
\put(30,15){\circle*{2}} 
\put(28.5,-1.8){\small $\Box$} 
\put(45,15){\circle{2}} 
\put(45,0){\circle*{2}} 
\put(60,15){\circle*{2}}
\put(75,15){\circle{2}}
\put(16,15){\line(1,0){13}}  
\put(15,14){\line(0,-1){13}} 
\put(16.7,13.3){\line(1,-1){5}} 
\put(28.3,1.7){\line(-1,1){5}}
\put(31.7,1.7){\line(1,1){5}}
\put(43.7,13.3){\line(-1,-1){5}}
\put(46,15){\line(1,0){13}} 
\put(61,15){\line(1,0){13}} 
\put(45,14){\line(0,-1){13}} 
\put(30,14){\line(0,-1){12.5}}
\put(16,0){\line(1,0){12.8}}
\put(44,0){\line(-1,0){13}}
\put(44,15){\line(-1,0){13}}
\put(11,0){$1$} 
\put(11,13){$2$} 
\put(31,10.9){$4$} 
\put(46.5,0){$5$} 
\put(46,10.9){$6$} 
\put(59,10.9){$7$} 
\put(74,10.9){$8$} 
\put(33,0.3){$3$}
\end{picture}
\end{center}
\caption{Dynkin graph for another morsification of the class $E_8$}
\label{Ddd}
 \end{figure}

\begin{itemize}
\item[(i)] all their critical points are real, three of them are minima,  four are saddlepoints, and  one is a maximum;
\item[(ii)] all critical values are different;
\item[(iii)] the Dynkin graph of the intersection matrix of suitably oriented standard vanishing cycles is isomorphic (as a non-labelled graph) to the graph of Fig.~\ref{Ddd} (which is obtained by the method of \cite{GZ-1}, \cite{AC} from Fig.~\ref{e8proof} (right)), where  black circles correspond to saddlepoints and the square to the maximum;
\item[(iv)] only the critical values at the maximum and at the saddlepoint labelled in Fig.~\ref{Ddd} by 7 are positive, and the remaining six critical values are negative;
\item[(v)] all the critical values at saddlepoints are greater than all values at minima and lower than the value at the maximum point.
\end{itemize}
Any component   {\large $\tilde C_5$}  equivalent to {\large $C_5$}  contains a point $\lambda$ such that $f_\lambda$ satisfies these conditions (i)--(v). By the properness of the Looijenga map there exist a path $I:[0,1] \to \R^8$ such that $I(0)=\lambda$, $I([0,1)) \subset \mbox{\large $\tilde C_5$}, $
the critical values at the points No. 1, 3, 4 and 5 (see Fig. \ref{Ddd}) of $F_{I(t)}$ uniformly tend to 0 as $t$  tends to $1$, and remaining critical values stay fixed. The function $f_{I(1)}$ has then a critical point of type $D^-_4$. Let $f_{\tilde \lambda}$ be a 
very small non-discriminant perturbation  of this function deforming this point of type $D_4^-$ in accordance with Fig.~\ref{d80} (left) with one oval. Its virtual morsification is completely determined by that of $f_\lambda$ and coincides with that of the basic morsification $f_{\lambda_0}$. Therefore $\tilde \lambda$ belongs to the basic component, in particular the critical points of $f_{\tilde \lambda}$ are canonically numbered as in paragraph {\bf II} above. The critical values of $f_{\tilde \lambda}$ at critical points No. 2, 3, 4 and 5 (see Fig.~\ref{Dd}) are very close to 0, and remaining four critical values are distant from 0. Therefore point $I(1)$ (approached from component  {\large $\tilde C_5$})  belongs to our stratum of the boundary of the basic component, and hence {\large $\tilde C_5$}= {\large $ C_5$}.  \hfill $\Box$ \medskip

The uniqueness of the components of $\R^8 \setminus \Sigma$ representing equivalence classes with values $(576,-4)$, $(2652, -3)$, $(5608,-2)$ and $(8286,-1)$  of basic invariants can be proved as in paragraphs {\bf I---IV} above, only with the change of all functions $f_\lambda(x, y)$ by $-f_\lambda(-x, -y)$ (this change concerns also the definition of the basic component).  \hfill $\Box$
\medskip

\noindent
{\it Proof of Theorem \ref{the9} in case $E_7$} is almost the same as  for $E_8$ with obvious modifications. 

{\it In  case $E_6$}  we need to take additional care to avoid the ambiguity at entering the basic component from other ones, because the ``longest tail'' of the Dynkin graph $E_6$ is not defined. To do it, we consider on step {\bf II} the 5-dimensional stratum of the boundary of the basic component, which is related to the middle (i.e. invariant under the symmetry of the graph) saddlepoint, and on step {\bf III} the 4-dimensional corner stratum related to the pair of {\em other} two saddlepoints. \hfill $\Box$

\end{document}